\pgfplotsset{compat=newest}
\pgfplotsset{plot coordinates/math parser=false}
\newlength\figureheight
\newlength\figurewidth
\theoremstyle{definition}
\newtheorem{algorithm}{Algorithm}
\numberwithin{algorithm}{section}
\newcommand{\term}{\emph}
\newcommand{\field}[1]{\mathbb{#1}}
\newcommand{\N}{\mathbb{N}}
\newcommand{\R}{\field{R}}
\newcommand{\extR}{\overline \R}
\newcommand{\norm}[1]{\|#1\|}
\newcommand{\abs}[1]{|#1|}
\newcommand{\inv}[1]{#1^{-1}}
\newcommand{\grad}{\nabla}
\newcommand{\freevar}{\,\boldsymbol\cdot\,}
\newcommand{\Union}\bigcup
\newcommand{\Isect}\bigcap
\newcommand{\union}\cup
\newcommand{\isect}\cap
\newcommand{\bigunion}\bigcup
\newcommand{\bigisect}\bigcap
\newcommand{\defeq}{:=}
\newcommand{\downto}{\searrow}
\newcommand{\subdiff}{\partial}
\newcommand{\rangeSymbol}{\mathcal{R}}
\newcommand{\range}[1]{\rangeSymbol(#1)}
\DeclareMathOperator*{\argmin}{arg\,min}
\DeclareMathOperator{\interior}{int}
\DeclareMathOperator{\bd}{bd}
\DeclareMathOperator{\Dom}{dom}
\DeclareMathOperator{\tr}{tr}
\def \uminus@sym{\setbox0=\hbox{$\cup$}\rlap{\hbox 
        to\wd0{\hss\raise0.5ex\hbox{$\scriptscriptstyle{-}$}\hss}}\box0}
    \def \uminus    {\mathrel{\uminus@sym}}
\newcommand{\mathvar}[1]{\textup{#1}}
\renewcommand{\tilde}{\widetilde}
\newcommand{\iprod}[2]{\langle #1,#2\rangle}
\renewcommand{\L}{\mathcal{L}}
\newcommand{\BD}{\partial}
\newcommand{\TV}{\mathvar{TV}}
\def \weaktostar@sym{\setbox0=\hbox{$\rightharpoonup$}\rlap{\hbox 
        to\wd0{\hss\raise1ex\hbox{$\scriptscriptstyle{*\,}$}\hss}}\box0}
    \def \weaktostar    {\mathrel{\weaktostar@sym}}
\newcommand{\jprod}{\circ}
\newcommand{\J}{\mathcal{J}}
\newcommand{\K}{\mathcal{K}}
\newcommand{\iK}{\interior \K}
\newcommand{\E}{\mathcal{E}}
\newcommand{\nullspace}[1]{\mathcal{N}(#1)}
\newcommand{\eigval}{\lambda}
\crefname{Algorithm}{Algorithm}{Algorithms}
\newcommand{\setto}{\rightrightarrows}
\def\extR{\overline \R}
\def\linear{\mathcal{L}}
\newcommand{\linearLArrow}[1][]{\linear_{\triangleleft\ifx&#1&\else,\,#1\fi}}
\newcommand{\linearLArrowSpecial}[1][]{\linear^{\star}_{\triangleleft\ifx&#1&\else,\,#1\fi}}
\def\realopt#1{\widehat #1}
\def\this#1{#1^i}
\def\nexxt#1{#1^{i+1}}
\def\realoptu{{\realopt{u}}}
\def\realoptx{{\realopt{x}}}
\def\realoptz{{\realopt{z}}}
\def\realopty{{\realopt{y}}}
\def\realoptd{{\realopt{d}}}
\def\nextu{\nexxt{u}}
\def\nextx{\nexxt{x}}
\def\nexty{\nexxt{y}}
\def\nextd{\nexxt{d}}
\def\thisu{\this{u}}
\def\thisx{\this{x}}
\def\thisy{\this{y}}
\def\E{\mathbb{E}}
\def\Tau{T}
\def\TauTest{\Phi}
\def\tauTest{\phi}
\def\SigmaTest{\Psi}
\def\sigmaTest{\psi}
\def\BregFn{V}
\def\AltBregFn{V'}
\def\GammaLift#1{\Xi_{#1}}
\newcommand{\Test}{Z}
\newcommand{\Precond}{M}
\newcommand{\Step}{W}
\newcommand{\convex}{\mathcal{C}}
\DeclareFontFamily{U}{mathx}{\hyphenchar\font45}
\DeclareFontShape{U}{mathx}{m}{n}{<-> mathx10}{}
\DeclareSymbolFont{mathx}{U}{mathx}{m}{n}
\DeclareMathAccent{\widebar}{0}{mathx}{"73}
\newcommand{\Penalty}{\Delta}
\newcommand{\soc}{\mathrm{soc}}
\def\optyMU{y_\mu}
\def\optyMUtrans{{\tilde y}_\mu}
\def\optdMU{d_\mu}
\def\optzMU{z_\mu}
\def\esty{y'}
\def\estd{d'}
\def\estyTrans{{\tilde y}'}
\def\estdTrans{{\utilde d}'}
\def\realoptyTrans{\tilde{\realopty}}
\def\realoptdTrans{\utilde{\realoptd}}
\def\primid{e}
\def\PenaltyX{\delta}
\def\basepart#1{{\widebar #1}}
\def\basepartRealopty{\realopt{{\basepart y}}}
\def\basepartRealoptd{\realopt{{\basepart d}}}
\def\omegalowerbound#1{\omega_{*,#1}}
\def\bar{\widebar}
\title{Interior--proximal primal--dual methods}
\begin{document}

\date{2017-06-21 (revised 2018-07-31)}
\author{
    Tuomo Valkonen\thanks{ModeMat, Escuela Politécnica Nacional, Quito, Ecuador; previously Department of Mathematical Sciences, University of Liverpool, United Kingdom. \email{tuomo.valkonen@iki.fi}}
    }

\maketitle

\begin{abstract}    
	We study preconditioned proximal point methods for a class of saddle point problems, where the preconditioner decouples the overall proximal point method into an alternating primal--dual method. This is akin to the Chambolle--Pock method or the ADMM.
    In our work, we replace the squared distance in the dual step by a barrier function on a symmetric cone, while using a standard (Euclidean) proximal step for the primal variable.
    We show that under non-degeneracy and simple linear constraints, such a hybrid primal--dual algorithm can achieve linear convergence on originally strongly convex problems involving the second-order cone in their saddle point form. 
    On general symmetric cones, we are only able to show an $O(1/N)$ rate.
    These results are based on estimates of strong convexity of the barrier function, extended with a penalty to the boundary of the symmetric cone.
    The main contributions of the paper are these theoretical results.

    \textbf{Due to arXiv's inability to handle biblatex properly, and refusal to accept PDFs, references are broken in this file. Please get the correctly typeset version from \url{http://tuomov.iki.fi/publications/}.}
\end{abstract}

\section{Introduction}

Interior point methods exhibit fast convergence on several non-smooth non-strongly-convex problems, including linear problems with symmetric cone constraints \cite{as-2003,faybusovich1997euclidean,monteiro1998polynomial,nesterov-todd}. 
The methods have had less success on large-scale problems with more complex structure.
In particular, problems in image processing, inverse problems, and data science, can often be written in the form
\begin{equation*}
	\tag{P}
	\label{eq:problem}
	\min_x G(x) + F(Kx)
\end{equation*}
for convex, proper, lower semicontinuous $G$ and $F$, and a bounded linear operator $K$.
Often, with $G$ and $F$ involving norms and linear operators, \eqref{eq:problem} can be converted into linear optimisation on symmetric cones.
This is even automated by the disciplined convex programming approach of CVX \cite{cvx,gb08}.
Nonetheless, the need to solve a very large scale and difficult Newton system on each step of the interior point method makes this approach seldom practical for real-world problems.
Therefore, first-order splitting methods such as forward--backward splitting, ADMM (alternating directions method of multipliers) and their variants \cite{beck2009fista,loris2011generalization,gabay,chambolle2010first} dominate these application areas. In our present work, we are curious \emph{whether these two approaches---interior point and splitting methods---can be combined into an effective algorithm?}

The saddle point form of \eqref{eq:problem} is
\begin{equation*}
	\tag{S}
	\label{eq:saddle}
	\min_x ~\max_y~ G(x) + \iprod{Kx}{y} - F^*(y).
\end{equation*}
A popular algorithm for solving this problem is the primal--dual method of Chambolle and Pock \cite{chambolle2010first}. As discovered in \cite{he2012convergence}, the method can most concisely be written as a \term{preconditioned proximal point method}, solving on each iteration for $\nextu=(\nextx,\nexty)$ the variational inclusion 
\begin{equation*}
	\tag{PP$_0$}
	\label{eq:pp0}
	0 \in H(\nextu) + \Precond_{i+1}(\nextu-\thisu),
\end{equation*}
where the monotone operator
\begin{equation*}
    H(u) \defeq
        \begin{pmatrix}
            \subdiff G(x) + K^* y \\
            \subdiff F^*(y) -K x
        \end{pmatrix}
\end{equation*}
encodes the optimality condition $0 \in H(\realoptu)$ for \eqref{eq:saddle}.
For the standard proximal point method \cite{rockafellar1976monotone}, one would take $M­_{i+1}=I$ the identity. With this choice, the system \eqref{eq:pp0} is generally difficult to solve. 
In the Chambolle--Pock method the \term{preconditioning} or step length operator is given for suitably chosen step length parameters $\tau_i,\sigma_{i+1},\theta_i>0$ by
\[
	\Precond_{i+1} \defeq \begin{pmatrix} \inv\tau_i I  & -K^* \\ -\theta_i K & \inv\sigma_{i+1} I \end{pmatrix}.
\]
This choice of $\Precond_{i+1}$ decouples the primal $x$ and dual $y$ updates, making the solution of \eqref{eq:pp0} feasible in a wide range of problems.
If $G$ is strongly convex, the step length parameters $\tau_i,\sigma_{i+1},\theta_i$ can be chosen to yield $O(1/N^2)$ convergence rates of an ergodic duality gap and the squared distance $\norm{\thisx-\realoptx}^2$. If both $G$ and $F^*$ are strongly convex, then the method converges linearly.
Without any strong convexity, only the ergodic duality gap converges at the rate $O(1/N)$, and the iterates weakly \cite{tuomov-proxtest}.

In our earlier work \cite{tuomov-cpaccel,tuomov-proxtest,tuomov-blockcp}, we have modified $\Precond_{i+1}$ as well as the condition \eqref{eq:pp0} to still allow a level of mixed-rate acceleration when $G$ is strongly convex only on sub-spaces or sub-blocks of the variable $x=(x_1,\ldots,x_m)$, and derived a corresponding doubly-stochastic block-coordinate descent method. As an extension of that work, our specific question now is:
\begin{equation}
    \label{eq:intro-constr}
	\text{If } F^* \text{ encodes the constraint } Ay=b \text{ and } y \in \K
\end{equation}
for a symmetric cone $\K$, \emph{can we replace $\Precond_{i+1}$ in \eqref{eq:pp0} by a non-linear interior point preconditioner that yields tractable sub-problems and a fast, convergent algorithm?} 

Our approach is motivated, firstly, by the fact that \eqref{eq:intro-constr} frequently occurs in applications, in particular with $\K$ the \term{second-order cone} of elements $y=(y_0, \bar y) \in \R^{1+n}$ with $y_0 \ge \norm{\bar y}$ and $Ay=y_0$. This can be used to model $F^*$ that could otherwise be written as the constraint $F^*(y)=\delta_{B(0, b_0)})(\bar y)$.
Secondly, why we specifically want to try the interior point approach is that the standard and generic quadratic proximal term or preconditioner is not in any specific way adapted to the structure of the ball constraint or the cone $\K$: it is a penalty, but not a barrier approach. The logarithmic barrier, on the other hand, is exactly tuned to the structure of the problem. This suggests that it \emph{might} be able to yield better performance.

Generalised proximal point methods motivated by interior point methods have been considered before in \cite{yu2013convergence,chen2010entropy,lopez2017construction,kaplan2007interior,trandinh2014inexact}.
For iterates, which are generally shown to convergence, no convergence rates appear to be known. For function values, rates have been derived in \cite{lopez2017construction,trandinh2014inexact}.
This is in contrast to the superlinear convergence of a gap functional in conventional interior point methods for linear programming on symmetric cones \cite{as-2003,faybusovich1997euclidean,monteiro1998polynomial,nesterov-todd}.
The approach in the aforementioned works combining proximal point and interior point methods 
has essentially been to replace the squared distance in the proximal point method $\nextx \defeq \argmin_{x \in \K} G(x) +\frac{1}{2\tau}\norm{x-\thisx}^2$ for $\min_{x \in \K} G(x)$ by a suitable Bregman distance supported on $\iK \times \iK$, typically $D(x, x') \defeq \tr(x \jprod \ln x - x \jprod \ln x'+x'-x)$.
In \cref{sec:pedi} of the present work, we will instead replace the squared distance in the proximal point step for the dual variable $y$ by a more conventional barrier-based preconditioner $-\grad \log \det(y)$.
With this, we are able to obtain convergence rates \emph{for the iterates} of the method:
in general symmetric cones this is only $O(1/N)$ for the squared distance $\norm{x^N-\realoptx}^2$ between the primal iterate and the primal solution. I the second-order cone under non-degeneracy and $A=\iprod{a}{\freevar}$ for $a \in \iK$, this convergence becomes linear.
We demonstrate these theoretical results by numerical experiments in \cref{sec:numeric}.

The overall idea, how the theory works, is that the barrier-based preconditioner is strongly monotone on bounded subsets of $\iK$, and ``compatible'' with $\subdiff F^*$ on $\BD\K$ in such a way that these strong monotonicity estimates can, with some penalty term, be extended up to the boundary. This introduces some of the strong monotonicity that $\subdiff F^*$ itself is missing.

Since the performance of the overall algorithm we derive does not improve upon existing methods, our main contributions are these theoretical results on symmetrical cones.
An interesting question for future research is, whether the results for general cones can be improved, or whether the second-order cone is special?
Nevertheless, our present theoretical results make progress towards closing the gap between direct methods for \eqref{eq:problem}, and primal--dual methods for \eqref{eq:saddle}: among others, forward--backward splitting for \eqref{eq:problem} is known to obtain linear convergence with strong convexity assumptions on $G$ alone \cite{chen1997convergence}, but primal--dual methods generally still require the strong convexity of $F^*$ as well. For ADMM additional local estimates exist under quadratic \cite{boley2013local,han2013local} or polyhedrality assumptions \cite{hong2017linear}. On the other hand, it has been recently established that forward--backward splitting converges at least locally linearly even under less restrictive assumptions than the strong convexity of $G$ \cite{liang2014local,bredies2008linear}. 

Our convergence results depend on the convergence theory for non-linearly preconditioned proximal point methods from \cite{tuomov-proxtest}. We quote the relevant aspects in \cref{sec:general-estimates}. To use this theory, we need to compute estimates on the strong convexity of the barrier, with a penalty up to the boundary. This is the content of the latter half of \cref{sec:concepts}, after introduction of the basic Jordan-algebraic machinery for interior point methods on symmetric cones.


\section{Notation, concepts, and results on symmetric cones}
\label{sec:concepts}

We write $\linear(X; Y)$ for space of bounded linear operators between Hilbert spaces $X$ and $Y$. For any $A \in \linear(X; Y)$ we write $\nullspace{A}$ for the null-space, and $\range{A}$ for the range.
Also for possibly non-self-adjoint $T \in \L(X; X)$, we introduce the inner product and norm-like notations
\begin{equation}
    \label{eq:iprod-def}
    \iprod{x}{z}_T \defeq \iprod{Tx}{z},
    \quad
    \text{and}
    \quad
    \norm{x}_T \defeq \sqrt{\iprod{x}{x}_T},
    \quad (x, z \in X).
\end{equation}

With $\extR \defeq [-\infty,\infty]$, we write $\convex(X)$ for the space of convex, proper, lower semicontinuous functions from $X$ to $\extR$.
With $K \in \linear(X; Y)$, $G \in \convex(X)$ and $F^* \in \convex(Y)$ on Hilbert spaces $X$ and $Y$, we then wish to solve the minimax problem \eqref{eq:saddle}
assuming the existence of a solution $\realoptu=(\realoptx, \realopty)$ satisfying the optimality conditions $0 \in H(\realopty)$, in other words
\begin{equation}
    \label{eq:oc}
    \tag{OC}
    -K^* \realopty \in \subdiff G(\realoptx),
    \quad\text{and}\quad
    K \realoptx \in \subdiff F^*(\realopty).
\end{equation}

For a function $G$, as above, $\subdiff G$ stands the convex subdifferential \cite{rockafellar-convex-analysis}. For a set $C$,  $\subdiff C$ is the boundary.
We denote by $N_C(x)=\subdiff \delta_C(x)$ the normal cone to any convex set $C$ at $x \in C$, where $\delta_C$ is the indicator function of the set $C$ in the sense of convex analysis.

In \cref{sec:pedi}, we concentrate on $F^*$ of the general form \eqref{eq:lifted} in the next example.

\begin{example}[From ball constraints to second-order cones]
	\label{ex:ball-constraints}
    Very often in \eqref{eq:problem}, we have $F(z)=\sum_{i=1}^n \alpha_i \norm{z_i}_2$, where the norm is the Euclidean norm on $\R^m$ and $z=(z_1,\ldots,z_n) \in \R^{mn}$. Then $F^*(\basepart y)=\delta_{B(0, \alpha_i)}(\basepart y_i)$ for $\basepart y=(\basepart y_1,\ldots,\basepart y_n) \in \R^{mn}$.
    We may lift each $\basepart y_i$ into $\R^{1+m}$ as $y_i=(y_{i,0},\basepart y_i)$, and replace $F^*$ by 
    \begin{equation}
        \label{eq:lifted}
        \hat F^*(y) \defeq \sum_{i=1}^n \delta_{C_i}(y_i),
        \quad\text{where}\quad
        C_i \defeq \{ y_i \in \K \mid Ay=b\},
    \end{equation}
    where, the linear constraint is defined by $Ay \defeq (y_{1,0}, \ldots, y_{n,0})$ and $b \defeq (\alpha_1, \ldots, \alpha_n)$.
    The cone constraint is given by $\K = \K_\soc^n$ for the \term{second-order cone}
    \[
        \K_\soc \defeq \{ y=(y^0, \basepart y) \in \R^{1+m} \mid y_0 \ge \norm{\basepart y}\}.
    \]
\end{example}

In the following, we look at the Jordan-algebraic approach to analysis on the second-order cone and other \term{symmetric cones}.

\subsection{Euclidean Jordan algebras}
\label{sec:eucj}

We now introduce the minimum amount of the theory of Jordan algebras necessary for our work. For further details, we refer to \cite{faraut-koranyi-symmetric,koecher-notes}.

Technically, a real \term{Jordan algebra} $\J$ is a real (additive) vector space together with a bilinear and commutative multiplication operator $\circ: \J \times \J \to \J$ that satisfies the associativity condition $x \circ (x^2 \circ y) = x^2 \circ (x \circ y)$. Here we define $x^2 \defeq x \circ x$.
The Jordan algebra $\J$ is \emph{Euclidean} (or \emph{formally real}) if $x^2 + y^2 = 0$ implies $x=y=0$. We always assume that our Jordan algebras are Euclidean.

We will not directly need the last two technical definitions, but do rely on the very important consequence that $\J$ has a multiplicative unit element $e$: $x \jprod e = x$ for all $x \in \J$.
An element $x$ of $\J$ is then called invertible, if there exists an element $\inv x$, such that $x \jprod \inv x = \inv x \jprod x = e$.

\begin{example}[The Jordan algebra of symmetric matrices]
    \label{ex:matrix}
    To understand these and the following properties, it is helpful to think of the set of symmetric $m \times m$ matrices. They form a Jordan algebra endowed with the product $A \jprod B \defeq \frac{1}{2}(AB+BA)$.
    The inverse is the usual matrix inverse, as is the multiplicative identity.
    So are the properties discussed next.
\end{example}

An element $c$ in a Jordan algebra $\J$ is an \term{idempotent} if $c \jprod c = c$.
It is \term{primitive}, if it is not the sum of other idempotents.
A \term{Jordan frame} is a set of primitive idempotents $\{\primid_i\}_{i=1}^r$ such that $\primid_i \jprod \primid_j = 0$ for $i \ne j$, and $\sum_{j=1}^r \primid_j = e$. The number $r$ is the \term{rank} of $\J$.
For each $x \in \J$, there indeed exist unique real 
numbers $\{\eigval_i\}_{i=1}^r$, and a Jordan frame $\{\primid_i\}_{i=1}^r$, satisfying $x = \sum_{j=1}^r \eigval_i \primid_i$.
The numbers $\lambda_i(x)=\lambda_i$ are called the \term{eigenvalues} of $x$.
If all the eigenvalues are positive, we write $x > 0$ and call $x$ \term{positive definite}.
Likewise we write $x \ge 0$ if the eigenvalues are non-negative, and call $x$ \term{positive semi-definite}.
With the eigenvalues, we can define
\begin{enumerate}[label=(\roman*),nosep]
	\item\label{item:eucj-power}
	Powers $x^\alpha \defeq \sum_{j=1}^r \eigval_i^\alpha \primid_i$ when meaningful,
	\item The determinant $\det x \defeq \prod_j \eigval_j$, and
	\item The trace $\tr x \defeq \sum_{j=1}^r \eigval_j$.
	\item The inner product $\iprod{x}{y} \defeq \tr (x \jprod y)$, and the
	\item Frobenius norm $\norm{x} \defeq \norm{x}_F \defeq \sqrt{\sum_{j=1}^r \eigval_j^2} = \sqrt{\iprod{x}{x}}$.
\end{enumerate}
The inner product is positive-definite and associative, satisfying $\iprod{x \jprod y}{z}=\iprod{y}{x \jprod z}$.  We also frequently write
\[
	\eigval_{\max}(x) \defeq \max_{i=1,\ldots,r} \eigval_i(x)
	\quad\text{and}\quad
	\eigval_{\min}(x) \defeq \min_{i=1,\ldots,r} \eigval_i(x).
\]

For conciseness, we define for $x \in \J$ the operator $L(x)$ by $L(x)y \defeq x \jprod y$.
The \term{quadratic presentation} of $x$---this is one of the most crucial concepts for us, as we will soon see when covering symmetric cones---is then defined as $Q_x \defeq 2 L(x)^2 - L(x^2)$. The invertibility of $x$ is equivalent to the invertibility of $Q_x$.
Other important properties include \cite{faraut-koranyi-symmetric,as-2003}
\begin{enumerate}[label=(\roman*),nosep,resume]
	\item $Q_x^\alpha=Q_{x^\alpha}$ for $\alpha \in \R$, 
	\item\label{item:eucj-fundamental} $Q_{Q_x y}=Q_x Q_y Q_x$ (the \term{fundamental formula} of quadratic presentations),  
	\item $Q_x \inv x = x$, 
	\item $Q_x e = x^2$, and
	\item\label{item:det-Qxy} $\det(Q_x y)=\det(x^2)y=\det(x)^2 y$.
\end{enumerate}
Moreover, $Q_x$ is self-adjoint with respect to the inner product defined above, and the eigenvalues are products $\eigval_i(x)\eigval_j(x)$ \cite{koecher-notes,faraut-koranyi-symmetric}, so that
\begin{equation}
	\label{eq:eigval-q-bound}
	\eigval_{\min}^2(x) \norm{y}^2 \le \iprod{Q_x y}{y} \le \eigval_{\max}(x)^2 \norm{y}^2
	\quad\text{for all}\quad y 
	\quad\text{when}\quad x \ge 0.
\end{equation}

\begin{example}[The Euclidean Jordan algebra of quadratic forms]
	\label{ex:qforms}
    Let $\E_{1+m}$ denote the space of vectors $x=(x_0, \basepart x) \in \R^{1+m}$ with $x_0$ scalar.
    Setting
    \[
        x \jprod y = (x^T y, x_0 \basepart y + y_0 \basepart x),
    \]
    we make $(\E_{1+m}, \jprod)$ into a Euclidean Jordan algebra.
    The identity element is $e=(1, 0)$, rank $r=2$, and the inner product is
    \begin{equation}
        \label{eq:qforms-innerproduct}
        \iprod{x}{y} = 2 x^T y.
    \end{equation}
    Defining the diagonal mirroring operator $R \defeq \left(\begin{smallmatrix} 1 & 0 \\ 0 & -I \end{smallmatrix}\right)$, we find that $\det x = x^T R x = x_0^2 - \norm{\basepart x}^2$, and $\inv x = R x / \det x$ when $\det x \ne 0$.
\end{example}

\subsection{Symmetric cones}
\label{sec:symmc}

The \term{cone of squares} of a Euclidean Jordan algebra $\J$ is defined as
\[
	\K \defeq \{ x^2 \mid x \in \J\}.
\]
The cones generated this way are precisely the so-called symmetric cones \cite{faraut-koranyi-symmetric} $\K^*=-\K$, or the self-scaled cones of \cite{nesterov-todd}. 
Their important properties include \cite{faraut-koranyi-symmetric,koecher-notes}:
\begin{enumerate}[label=(\roman*),nosep]
    \item\label{item:symmc:ik-l}
    	$\iK = \{ x \in \J \mid x \text{ is positive-definite}\}
               = \{ x \in \J \mid L(x) \text{ pos. def.}\}$.
    \item 
          \label{property:sc-iprod-a}
          $\iprod{x}{y} \ge 0$ for all $y \in \K$ iff $x \in \K$, and
    \item \label{property:sc-iprod-b}
          $\iprod{x}{y} > 0$ for all $y \in \K \setminus \{0\}$ iff $x \in \iK$.
    \item\label{property:sc-iprod-qonto} $Q_x$ for $x \in \iK$ maps $\K$ onto itself.
    \item For $x, y \in \iK$, there exists unique $a \in \iK$, such that $x=Q_a y$.
    \item\label{property:sc-iprod-zero} For any $x,y \in \K$, $\iprod{x}{y}=0$ iff $x \jprod y = 0$ \cite{faybusovich-1997}. 
\end{enumerate}
For application to interior point methods, and in particular for our work, the following properties are particularly important:
\begin{enumerate}[label=(\roman*),resume,nosep]
    \item The barrier function $B(x) \defeq -\log (\det x)$ tends to infinity as
          $x$ goes to $\bd \K$.
    \item\label{item:barrier} $\grad B(x) = -\inv x$ and $\grad^2 B(x) = \inv Q_x$ 
          (differentiated wrt.~the norm in $\J$).
	\item\label{item:symmc-normal} The normal cone $N_\K(x) = - \{y \in \K \mid \iprod{y}{x}=0\}$ for $x \in \K$ \cite[Lemma 3.1]{tuomov-thesis}.
\end{enumerate}

\begin{example}[The cone of symmetric positive definite matrices]
    In the Jordan algebra of symmetric matrices from \cref{ex:matrix}, the cone of squares is the set of positive semi-definite symmetric matrices.
\end{example}

\begin{example}[The second order cone]
    The cone of squares of the Jordan algebra $\E_{1+m}$  of quadratic forms is the second order cone that we have already seen in \cref{ex:ball-constraints}, 
    \[
    	\K = \K_\soc \defeq \{ x \in \E_{1+m} \mid x_0 \ge \norm{\basepart x}\}.
    \]
    If $0 \ne x \in \bd \K$, we have $x^2 = 2 x_0 x$. Rescaled, we get a primitive idempotent $c=x/\sqrt{2x_0}$. The only primitive idempotent orthogonal to $c$ is $c'=Rx/\sqrt{2x_0}$. Therefore, the normal cone $N_\K(x) = \{-\alpha Rx \mid \alpha \ge 0\}$.

    One has to be careful with the fact that the expressions for the barrier gradient and Hessian in \ref{item:barrier} are based on the inner product \eqref{eq:qforms-innerproduct} in $\E_{1+m}$. This is scaled by the factor $r=2$ with respect to the standard inner product on $\R^{1+m}$.
\end{example}

\subsection{Linear optimisation on symmetric cones}
\label{sec:sclp}

Let $A \in \L(\J; \R^k)$ for an arbitrary Euclidean Jordan algebra $\J$ with the corresponding cone of squares $\K$.
We will frequently make use of solutions $(\optyMU, \optdMU, \optzMU) \in \iK \times \iK \times \R^k$ to the system
\begin{equation*}
	\label{eq:ip-system-reg}
	\tag{SCLP$_\mu$}
	Ay = b, \quad A^*z + c = d, \quad y \circ d = \mu e, \quad y, d \in \iK.
\end{equation*}
These are meant to approximate solutions $(\realopty, \realoptd, \realoptz) \in \K \times \K \times \R^k$ to the system
\begin{equation*}
	\label{eq:ip-system}
	\tag{SCLP}
	Ay = b, \quad A^*z + c = d, \quad y \circ d = 0, \quad y,d \in \K.
\end{equation*}
The system \eqref{eq:ip-system} arises from primal--dual optimality conditions for linear optimisation on symmetric cones, specifically the problem
\begin{equation*}
    \min_{y \in \K,\, Ay=b}~\iprod{c}{y}.
\end{equation*}
The system \eqref{eq:ip-system-reg} arises from the introduction of the barrier in the problem
\begin{equation}
	\label{eq:barrier-problem}
    \min_{y \in \K,\, Ay=b}~\iprod{c}{y} - \mu \log \det(y).
\end{equation}

The set of solutions to \eqref{eq:ip-system-reg} for varying $\mu>0$ is called the \term{central path}. From \cite[Theorem 2.2]{faybusovich1997euclidean} we know that if there exists a primal--dual \term{interior feasible point}, i.e., some $(y^*, d^*, z^*) \in \iK \times \iK \times \R^k$ such that $Ay^*=b$ and $A^*z^*+c=d^*$, then there exists a solution $(\optyMU, \optdMU, \optzMU)$ to \eqref{eq:ip-system-reg} for every $\mu>0$. In particular, if there exists a solution for some $\mu>0$, there exist a solution for all $\mu>0$.
In fact, we have the following:

\begin{lemma}
	\label{lemma:sclp-existence}
    Suppose the primal feasible set $C \defeq \{ y \in \K \mid Ay=b\}$ is bounded, and that there exists a primal interior feasible point $y^* \in \iK \isect C$. Then there exists a solution $(y_\mu, d_\mu, z_\mu) \in \iK \times \iK \times \R^k$ to \eqref{eq:ip-system-reg} for all $\mu>0$.
\end{lemma}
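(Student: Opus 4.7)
The plan is to recover a solution to \eqref{eq:ip-system-reg} as the KKT system of the barrier problem \eqref{eq:barrier-problem}. Setting $f(y) \defeq \iprod{c}{y} - \mu \log \det(y)$, I would first show that $f$ attains its minimum over $C = \{y \in \K \mid Ay = b\}$ at some $y_\mu \in \iK$; then set $d_\mu \defeq \mu \inv{y_\mu}$ and extract a Lagrange multiplier $z_\mu$ from first-order optimality.

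For existence of the minimiser, extend $-\mu \log \det(y)$ to be $+\infty$ on $\bd \K$. This extension is lower semicontinuous because $B(y) = -\log\det y \to +\infty$ as $y \to \bd \K$ (the unlabelled barrier-divergence property immediately preceding \ref{item:barrier}). Together with convexity and the hypothesis $f(y^*) < \infty$, this makes the extended $f$ a proper lsc convex function on $\K$. The set $C$ is compact (closed by continuity of $A$ and closedness of $\K$; bounded by hypothesis), so $f$ attains its infimum over $C$ at some $y_\mu$ with $f(y_\mu) \le f(y^*) < \infty$. Finiteness then forces $y_\mu \notin \bd \K$, and hence $y_\mu \in \iK \cap C$.

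Since $y_\mu$ lies in the open set $\iK$, only the affine constraint $Ay = b$ is binding at the minimum, and this constraint satisfies the standard qualification automatically. The first-order condition $\grad f(y_\mu) \in \range(A^*)$, together with $\grad f(y_\mu) = c - \mu \inv{y_\mu}$ from \ref{item:barrier}, yields $z_\mu \in \R^k$ with $A^* z_\mu + c = \mu \inv{y_\mu}$. Setting $d_\mu \defeq \mu \inv{y_\mu}$, we have $d_\mu \in \iK$ (the eigenvalues of $\inv{y_\mu}$ are the positive reciprocals of those of $y_\mu$) and $y_\mu \circ d_\mu = \mu\, y_\mu \circ \inv{y_\mu} = \mu e$, so $(y_\mu, d_\mu, z_\mu)$ satisfies \eqref{eq:ip-system-reg}. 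The main obstacle is the existence step: keeping the minimiser off $\bd \K$. This relies essentially on the barrier blow-up, which rules out boundary points once $f(y^*)$ provides a finite comparison value; the subsequent KKT extraction is routine convex duality in finite dimensions.
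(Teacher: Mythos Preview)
Your proof is correct and follows essentially the same route as the paper: show that the barrier problem \eqref{eq:barrier-problem} has a minimiser in $\iK \cap C$ (via properness, lower semicontinuity, and boundedness of $C$), then read off \eqref{eq:ip-system-reg} from the first-order conditions. The paper is terser---it merely asserts that the minimiser ``has to satisfy \eqref{eq:ip-system-reg} for some $d$ and $z$'' and then closes by invoking \cite[Theorem 2.2]{faybusovich1997euclidean}---whereas you spell out the KKT extraction explicitly and work directly for each $\mu>0$; both are fine.
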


\begin{proof}
	The article \cite{faybusovich1997euclidean} considers a more general class of linear monotone complementarity problems (LMCPs) than our our SCLPs (symmetric cone linear programs). For the special case of SCLPs, our assumption on the existence of $y^*$ implies that the feasible set in \eqref{eq:barrier-problem} non-empty and closed. Since the objective function is level-bounded, proper, and lower semicontinuous, the problem \eqref{eq:barrier-problem} has a solution $y$. This $y$ has to satisfy \eqref{eq:ip-system-reg} for some $d$ and $z$. Now \cite[Theorem 2.2]{faybusovich1997euclidean} applies.
\end{proof}

Practical methods \cite{nesterov-todd,as-2003} for solving \eqref{eq:ip-system} by closely following the central path are based on scaling the iterates $(y^i, d^i)$ by $Q_p$ for a suitable $p \in \iK$. We will need this scaling for different purposes, and therefore recall the following basic properties.

\begin{lemma}
    \label{lemma:scaling}
     Let $p \in \iK$, and $y, d \in \K$. Define $\tilde y \defeq Q_p^{1/2} y$, and $\utilde d \defeq Q_p^{-1/2} d$. Then
    \begin{enumerate}[label=(\roman*),nosep]
        \item\label{item:scaling-zero} $y \jprod d = 0$ if and only if $\tilde y \jprod \utilde d=0$.
        \item\label{item:scaling-mu} If $y,d \in \iK$ and $\mu>0$, then $y \jprod d = \mu e$ if and only if $\tilde y \jprod \utilde d = \mu e$.
        \item\label{item:scaling-lin} \eqref{eq:ip-system} (resp.~\eqref{eq:ip-system-reg}) is satisfied for $y$ and $d$ if and only if it is satisfied for $\tilde y$ and $\utilde d$ with $A$ and $c$ replaced by $\tilde A \defeq AQ_p^{-1/2}$ and $\utilde c \defeq Q_p^{-1/2} c$.
    \end{enumerate}
\end{lemma}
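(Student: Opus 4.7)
The plan is to reduce all three claims to two Jordan-algebraic ingredients. First, since $p \in \iK$ forces $p^{1/2} \in \iK$, the power identity $Q_x^\alpha = Q_{x^\alpha}$ gives $Q_p^{\pm 1/2} = Q_{p^{\pm 1/2}}$; these are self-adjoint and by property \ref{property:sc-iprod-qonto} map $\K$ (and $\iK$) onto themselves, so automatically $\tilde y, \utilde d \in \K$ (respectively $\iK$ when $y, d \in \iK$). Second, I would establish the inverse-scaling identity $(Q_p^{1/2} y)^{-1} = Q_p^{-1/2} y^{-1}$ for $y \in \iK$. Setting $z \defeq Q_p^{1/2} y$, the fundamental formula \ref{item:eucj-fundamental} gives $Q_z = Q_p^{1/2} Q_y Q_p^{1/2}$, and combining with $z^{-1} = Q_z^{-1} z$ and $Q_y^{-1} y = y^{-1}$ (both instances of $Q_w w^{-1} = w$ used together with invertibility of $Q_w$ when $w \in \iK$), one computes $z^{-1} = Q_p^{-1/2} Q_y^{-1} Q_p^{-1/2} Q_p^{1/2} y = Q_p^{-1/2} y^{-1}$.

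Item \ref{item:scaling-zero} then drops out: since $y, d \in \K$ and $\tilde y, \utilde d \in \K$, property \ref{property:sc-iprod-zero} lets me replace both Jordan-product equalities by the scalar identities $\iprod{y}{d} = 0$ and $\iprod{\tilde y}{\utilde d} = 0$, and self-adjointness gives $\iprod{\tilde y}{\utilde d} = \iprod{Q_p^{1/2} y}{Q_p^{-1/2} d} = \iprod{y}{d}$. For item \ref{item:scaling-mu}, with $y \in \iK$ the condition $y \jprod d = \mu e$ is equivalent to $d = \mu y^{-1}$; applying the inverse-scaling identity yields $\utilde d = \mu (Q_p^{1/2} y)^{-1} = \mu \tilde y^{-1}$, which in turn is equivalent to $\tilde y \jprod \utilde d = \mu e$ since $\tilde y \in \iK$. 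The reverse implication is symmetric, using the scaling based on $p^{-1}$.

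Item \ref{item:scaling-lin} is then pure bookkeeping: $\tilde A \tilde y = A Q_p^{-1/2} Q_p^{1/2} y = A y$, and self-adjointness of $Q_p^{-1/2}$ yields $\tilde A^* z + \utilde c = Q_p^{-1/2}(A^* z + c) = Q_p^{-1/2} d = \utilde d$. Together with items \ref{item:scaling-zero} and \ref{item:scaling-mu} and the $\K$-invariance observed at the outset (which handles the membership constraints $y,d \in \K$ or $\iK$ in both systems), this shows that \eqref{eq:ip-system} and \eqref{eq:ip-system-reg} transfer in either direction. The only step that is not immediate from the properties listed in Section \ref{sec:eucj} is the inverse-scaling identity; once that short computation built on the fundamental formula is in hand, everything else is direct substitution.
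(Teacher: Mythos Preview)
Your argument is correct and matches the paper's approach for items \ref{item:scaling-zero} and \ref{item:scaling-lin}: both rely on property \ref{property:sc-iprod-zero} to trade the Jordan product for the inner product in (i), on property \ref{property:sc-iprod-qonto} to preserve cone membership, and on straightforward self-adjointness bookkeeping for the linear constraints in (iii).

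The one genuine difference is item \ref{item:scaling-mu}. The paper simply cites \cite[Lemma~28]{as-2003}, whereas you give a self-contained derivation via the inverse-scaling identity $(Q_p^{1/2} y)^{-1} = Q_p^{-1/2} y^{-1}$, obtained from the fundamental formula $Q_{Q_{p^{1/2}} y} = Q_p^{1/2} Q_y Q_p^{1/2}$ together with $w^{-1} = Q_w^{-1} w$. This is a cleaner, more transparent route than deferring to the literature, and it makes explicit exactly which Jordan-algebraic facts are doing the work; nothing is lost and the argument stays entirely within the toolkit of \cref{sec:eucj}.
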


\begin{proof}
    The claim \ref{item:scaling-zero} is a consequence of the properties \cref{sec:symmc}\ref{property:sc-iprod-qonto} and \ref{property:sc-iprod-zero}.
    The claim \ref{item:scaling-lin} is the content of \cite[Lemma 28]{as-2003}.
    Finally, to establish \ref{item:scaling-lin}, the remaining linear equations in \eqref{eq:ip-system} and \eqref{eq:ip-system-reg} are obvious. 
\end{proof}

As a last preparatory step, before starting to derive new results, we say that solutions $y,d \in \K$ to \eqref{eq:ip-system} are \term{strictly complementary} if $y \jprod d=0$ and $y + d \in \iK$. We say that $y$ is \emph{primal non-degenerate} if 
\begin{equation}
    \label{eq:primal-nondeg}
    v =A^*z \text{ and }  y \jprod v=0 \implies v=0.
\end{equation}
Likewise $d$ is \emph{dual non-degenerate} if 
\begin{equation}
    \label{eq:dual-nondeg}
    Av=0 \text{ and }  d \jprod v=0 \implies v=0.
\end{equation}

\subsection{Convergence rate of the central path}
\label{sec:approx}

We now study convergence rates for the central path, which we will need to develop approximate strong monotonicity estimates.
Some existing work can be found in \cite{wright2002properties}, but overall the results in the literature are limited; more work can be found on the properties and mere existence of limits of the central path \cite{ramirez2016central,dacruzneto2008central,monteiro1998existence,burachik2008properties,halicka2005limiting}.
After all, in typical interior point methods, one is not interested in solving \eqref{eq:ip-system-reg} exactly; rather, one is interested in staying close to the central path while decreasing $\mu$ fast.
So here we provide the result necessary for our work.

\begin{lemma}
    \label{lemma:central-path-approximation}
    Let $\realopty, \realoptd \in \K$ and $\realoptz \in \R^k$ solve \eqref{eq:ip-system}.
    Also let $\optyMU, \optdMU \in \iK$ and $\optzMU \in \R^k$ solve \eqref{eq:ip-system-reg} for some $\mu>0$.
    If $\realopty$ and $\realoptd$ are strictly complementary, and both primal and dual non-degenerate, then
    \begin{equation}
        \label{eq:central-path-approximation}
        \norm{\optyMU- \realopty}
        \le \frac{2\mu \sqrt{r}}{\eigval_{\min}(M_{\realopty,\realoptd})},
    \end{equation}
    where $\eigval_{\min}(M_{y,d}) > 0$ is the minimal eigenvalue of the the linear operator $M_{y,d} \in \L(\J; \J)$ defined at $y, d \in \J$ for $\eta \in \nullspace{A}$ and $\xi \in \range{A^*}$ by
    \[
        M_{y,d}(\xi+\eta) \defeq L(y)\xi + L(d)\eta.
    \]
\end{lemma}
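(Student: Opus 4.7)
The plan is to subtract the complementarity equations in \eqref{eq:ip-system} and \eqref{eq:ip-system-reg} and exploit the linear structure of the resulting identity. I introduce the deviations $\Delta y \defeq \optyMU - \realopty$ and $\Delta d \defeq \optdMU - \realoptd$. The constraint $Ay=b$ in both systems gives $\Delta y \in \nullspace{A}$, while $A^*z+c=d$ in both gives $\Delta d = A^*(\optzMU-\realoptz) \in \range{A^*}$. Since $\nullspace{A}$ and $\range{A^*}$ are orthogonal in the Jordan-algebra inner product, $\norm{\Delta y + \Delta d}^2 = \norm{\Delta y}^2 + \norm{\Delta d}^2$, so in particular $\norm{\Delta y} \le \norm{\Delta y + \Delta d}$, and it suffices to bound the sum.

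Next, I would expand $(\realopty + \Delta y) \jprod (\realoptd + \Delta d) = \mu e$ using $\realopty \jprod \realoptd = 0$ to obtain
\[
    L(\realopty)\Delta d + L(\realoptd)\Delta y + \Delta y \jprod \Delta d = \mu e.
\]
Because $\Delta d \in \range{A^*}$ and $\Delta y \in \nullspace{A}$, this is precisely $M_{\realopty,\realoptd}(\Delta y+\Delta d) = \mu e - \Delta y \jprod \Delta d$. Writing $s \defeq \norm{\Delta y + \Delta d}$ and using $\norm{e}^2 = \tr(e) = r$,
\[
    \eigval_{\min}(M_{\realopty,\realoptd})\, s \le \mu\sqrt{r} + \norm{\Delta y \jprod \Delta d}.
\]
A standard Jordan-algebra operator-norm estimate bounds $\norm{\Delta y \jprod \Delta d} \le C \norm{\Delta y}\norm{\Delta d} \le (C/2) s^2$, yielding a quadratic inequality in $s$.

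The main obstacle is then twofold: securing $\eigval_{\min}(M_{\realopty,\realoptd}) > 0$ and solving the quadratic inequality. For the first, strict complementarity provides a common Jordan frame in which $\realopty$ and $\realoptd$ have disjoint, complementary supports; primal and dual non-degeneracy, applied on the two Peirce components indexed by those supports, then rule out any non-trivial kernel elements of $M_{\realopty,\realoptd}$ on $\range{A^*}\oplus\nullspace{A}$. This is the classical non-singularity of the Newton system for symmetric-cone programming. Given that, convergence of the central path $\optyMU \to \realopty$ as $\mu \downarrow 0$ (which also follows from the strict-complementarity and non-degeneracy hypotheses) ensures $s \le \eigval_{\min}(M_{\realopty,\realoptd})/C$ for all sufficiently small $\mu$. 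On that regime the quadratic term is absorbed into half of the linear term, giving $(1/2)\eigval_{\min}(M_{\realopty,\realoptd})\, s \le \mu\sqrt{r}$, which yields the stated bound $\norm{\Delta y} \le s \le 2\mu\sqrt{r}/\eigval_{\min}(M_{\realopty,\realoptd})$.
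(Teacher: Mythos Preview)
Your setup is correct and matches the paper's: you obtain $\Delta y \in \nullspace{A}$, $\Delta d \in \range{A^*}$, and the identity $M_{\realopty,\realoptd}(\Delta y+\Delta d) = \mu e - \Delta y \jprod \Delta d$. The gap is in how you handle the quadratic term $\Delta y \jprod \Delta d$. Your argument bounds it by $(C/2)s^2$ and then absorbs it for sufficiently small $\mu$ by appealing to central path convergence. But the lemma is stated for an arbitrary fixed $\mu>0$, and it is later applied (in \cref{proposition:barrier-strong-monotonicity-sclp}) at a $\mu$ that is not assumed small. Your argument therefore does not prove the statement as written; it proves only an asymptotic version, and even that hinges on an unproved convergence of the central path to the particular $\realopty$ you started with.

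The paper avoids this entirely by a symmetrisation trick: rather than estimating the quadratic term, it rewrites the identity as
\[
    \tfrac{1}{2}\bigl(M_{\realopty,\realoptd}+M_{\optyMU,\optdMU}\bigr)(\Delta y+\Delta d)=\mu e,
\]
which is \emph{exactly linear} in $\Delta y+\Delta d$ because $\Delta y \jprod \Delta d$ is absorbed into $M_{\optyMU,\optdMU}$. One then checks (citing \cite{faybusovich1997euclidean}) that both $M_{\realopty,\realoptd}$ and $M_{\optyMU,\optdMU}$ are positive definite, so that $\eigval_{\min}(M_{\realopty,\realoptd}+M_{\optyMU,\optdMU})\ge\eigval_{\min}(M_{\realopty,\realoptd})$, and the bound follows directly for every $\mu>0$. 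The factor $2$ in \eqref{eq:central-path-approximation} comes from inverting the $\tfrac{1}{2}$ in this identity, not from halving a coefficient after absorbing a quadratic term.
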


\begin{proof}
    Observe that $(\optyMU, \optdMU, \optzMU)$ solves \eqref{eq:ip-system-reg} if and only if $\optyMU=\realopty+\Delta y$ and $\optdMU=\realoptd+\Delta d$ with
    \[
        \Delta y \in \nullspace{A},\quad
        \Delta d \in \range{A^*},\quad\text{and}\quad
        M_{\realopty,\realoptd}(\Delta y+\Delta d) = \mu e - \Delta y \jprod \Delta d.
    \]
    Here we have used the fact that $\realopty \jprod \realoptd=0$.
    We may rearrange the final condition as 
    \[
        \frac{1}{2}M_{\realopty,\realoptd}(\Delta y+\Delta d) = \mu e - \frac{1}{2}(\realopty + \Delta y) \jprod \Delta d - \frac{1}{2}\Delta y \jprod (\realoptd + \Delta d).
    \]
    This simply says that
    \[
        \frac{1}{2}\left(
            M_{\realopty,\realoptd}+M_{\optyMU,\optdMU}
        \right)
        (\Delta y+\Delta d)
        = \mu e.
    \]

    From \cite[Corollary 4.9]{faybusovich1997euclidean} we know that the operator $M_{\realopty,\realoptd}$ is invertible when the solution $(\realopty,\realoptd)$ is strictly complementary and both primal and dual non-degenerate.
    Moreover, for $(\optyMU, \optdMU)$ satisfying \eqref{eq:ip-system-reg}, we know from \cite[Corollary 4.6]{faybusovich1997euclidean} that $M_{\optyMU, \optdMU}$ is invertible. In fact, both $M_{\optyMU, \optdMU}$ and $M_{\realopty, \realoptd}$ are positive definite: in both cases, $(y, d)=(\realopty, \realoptd)$, and $(y, d)=(\optyMU, \optdMU)$, the map $m(\zeta) \defeq \iprod{\zeta}{M_{y,d}\zeta}$ is continuous on $\J$, while $m(\eta)>0$ and $m(\xi)>0$ for all $\eta \in \nullspace{A}$ and $\xi \in \range{A^*}$. For $(y,d)=(\realopty, \realoptd)$ the positivity follows from the assumed primal and dual non-degeneracy, as the operators $L(\realopty)$ and $L(\realoptd)$ are positive semi-definite. For $(y, d)=(\optyMU, \optdMU) \in \iK \times \iK$, the operators $L(\optyMU)$ and $L(\optdMU)$ are positive definite; see \cref{sec:symmc}\ref{item:symmc:ik-l}. 
    By an interpolation argument, a contradiction to invertibility would therefore be reached if $M_{y,d}$ were not positive semi-definite on the whole space \cite[cf.][proof of Lemma 32]{as-2003}.

    As a sum of invertible positive definite operators, it now follows that $M_{\realopty,\realoptd}+M_{\optyMU,\optdMU}$ is invertible.
    Consequently we estimate
    \begin{equation*}
        \begin{split}
        \norm{\Delta y}
        \le
        \norm{\Delta y+\Delta d}
        &
        =
        2\mu \norm{e} \norm{\inv{(M_{\realopty,\realoptd}+M_{\optyMU,\optdMU})}}
        \\
        &
        \le
        \frac{2\mu \sqrt{r}}{\eigval_{\min}(M_{\realopty,\realoptd}+M_{\optyMU,\optdMU})}
        \le
        \frac{2\mu \sqrt{r}}{\eigval_{\min}(M_{\realopty,\realoptd})},
        \end{split}
    \end{equation*}
    where the first inequality holds by the orthogonality of $\Delta y$ and $\Delta d$.
    The claim follows.
\end{proof}

\subsection{Strong monotonicity of the barrier}

If the barrier $B(y)=-\log(\det y)$ is as in \cref{sec:symmc}, then in the next lemma $d=-\grad B(y)$.
Therefore, the lemma provides an estimate of strong monotonicity of the gradient of the barrier.

\begin{lemma}
    \label{lemma:barrier-strong-monotonicity}
    Let $y, y' \in \iK$, and denote $d \defeq \inv y$, and $d' \defeq \inv{(y')}$.
    Then
    \begin{equation}
        \label{eq:strong-monotone-interior}
        - \iprod{d'-d}{y' - y}
        \ge
        \frac{1}{\eigval_{\max}(y')\eigval_{\max}(y)}\norm{y' - y}^2.
    \end{equation}
\end{lemma}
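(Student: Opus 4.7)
The plan is to reduce the estimate to the integral of the Hessian of the barrier $B(y) = -\log \det y$ along the line segment from $y$ to $y'$. By \cref{sec:symmc}\ref{item:barrier}, $\grad B(x) = -\inv x$ and $\grad^2 B(x) = \inv Q_x$, so the derivative of the inversion map at $x \in \iK$ acts as $v \mapsto -\inv Q_x v$. Since $\iK$ is convex, the path $\gamma(t) \defeq (1-t)y + ty'$ lies in $\iK$ for every $t \in [0,1]$, and the fundamental theorem of calculus gives
\[
d' - d = -\int_0^1 \inv Q_{\gamma(t)} (y' - y) \, dt.
\]
Taking the inner product with $v \defeq y' - y$ and flipping the sign converts the left-hand side of \eqref{eq:strong-monotone-interior} into $\int_0^1 \iprod{\inv Q_{\gamma(t)} v}{v} \, dt$.

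To bound the integrand from below I use the power identity $\inv Q_x = Q_{\inv x}$ together with \eqref{eq:eigval-q-bound}. Since the eigenvalues of $\inv x$ are the reciprocals of those of $x$, this yields
\[
\iprod{\inv Q_x v}{v} \ge \eigval_{\min}^2(\inv x) \norm{v}^2 = \frac{\norm{v}^2}{\eigval_{\max}^2(x)}.
\]
The convexity of $\eigval_{\max}$ on $\J$---which follows from the characterisation $\eigval_{\max}(x) = \inf\{\alpha \in \R \mid \alpha e - x \in \K\}$---then provides $\eigval_{\max}(\gamma(t)) \le (1-t) a + t b$ with $a \defeq \eigval_{\max}(y)$ and $b \defeq \eigval_{\max}(y')$.

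Assembling the two bounds reduces the proof to the elementary identity $\int_0^1 ((1-t)a + tb)^{-2} \, dt = 1/(ab)$, verified by the substitution $s = (1-t)a + tb$ (and directly at $a=b$). The only real subtlety is the correct handling of the Jordan-algebraic identification $\grad^2 B(x) = \inv Q_x$ inside the integral, but this is already supplied by \cref{sec:symmc}\ref{item:barrier}. Everything else is routine convex analysis together with this one-dimensional integral.
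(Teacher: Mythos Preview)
Your argument is correct and genuinely different from the paper's. You integrate the Hessian of the barrier along the segment $[y,y']$, bound $\iprod{Q_{\gamma(t)}^{-1}v}{v}$ below by $\eigval_{\max}(\gamma(t))^{-2}\norm{v}^2$ via \eqref{eq:eigval-q-bound}, use the convexity of $\eigval_{\max}$ to linearise the denominator, and close with the exact evaluation $\int_0^1 ((1-t)a+tb)^{-2}\,dt = 1/(ab)$. Each step is sound.

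The paper instead invokes the Nesterov--Todd scaling point: there is a unique $w \in \iK$ with $d' = \inv Q_w y$ and $d = \inv Q_w y'$, whence $-\iprod{d'-d}{y'-y} = \norm{y'-y}^2_{\inv Q_w}$ exactly, and the lemma reduces to the operator bound $\inv Q_w \ge (\eigval_{\max}(y)\eigval_{\max}(y'))^{-1}I$, established by manipulating the explicit formula \eqref{eq:w-transform-nt} and the fundamental formula for quadratic presentations. Your route is more elementary and self-contained---no scaling-point machinery, just calculus plus the convexity of $\eigval_{\max}$. The paper's route, however, is not chosen for its own sake: the operator $\inv Q_w$ and the upper/lower bounds derived for it in \eqref{eq:invgw-expand}--\eqref{eq:invqw-lowerbound} are \emph{reused} verbatim in the proof of \cref{lemma:barrier-strong-monotonicity-boundary-ext}, where one of the endpoints is pushed to $\partial\K$ and the integral representation no longer applies cleanly. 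So your proof stands on its own for this lemma, but would not feed the next one without additional work.
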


\begin{proof}
    There exists a unique $w \in \iK$ s.t. $d'=\inv Q_w y$ and $d=\inv Q_w y'$;
    see, e.g., \cite[Corollary 3.1]{nesterov-todd}.
    We thus see \eqref{eq:strong-monotone-interior} to hold if
    \begin{equation}
        \label{eq:sigmatest-invqw-m-estim}
        \inv Q_w \ge \frac{1}{\eigval_{\max}(y')\eigval_{\max}(y)}.
    \end{equation}
    In fact, $w$ is given by the Nesterov--Todd direction
    \begin{equation}
        \label{eq:w-transform-nt}
        w=\left(Q_{y^{-1/2}}(Q_{y^{1/2}}d')^{1/2}\right)^{-1}.
    \end{equation}
    Indeed, using the fundamental formula for quadratic presentations (\cref{sec:eucj}\ref{item:eucj-fundamental}), we see
    \begin{equation}
        \label{eq:inv-wq-1}
        \inv Q_w
        =Q_{\inv w}
        =Q_{Q_{y^{-1/2}}(Q_{y^{1/2}}d')^{1/2}}
        =Q_{y^{-1/2}}Q_{Q_{y^{1/2}}d'}^{1/2}Q_{y^{-1/2}}.
    \end{equation}
    Following \cite[p.42]{alizadeh2003soc}, from this we quickly compute
    \[
        \inv Q_w y=Q_{y^{-1/2}}Q_{Q_{y^{1/2}}d'}^{1/2}e=Q_{y^{-1/2}}Q_{y^{1/2}}d'=d'.
    \]
    Inverting $d'=\inv Q_w y$, we get $\inv{(d')}=y'=\inv{(\inv Q_v y)}=Q_v \inv y=Q_v d$. Hence $d=\inv Q_v y$. This establishes the claimed properties of $w$.

    Continuing from \eqref{eq:inv-wq-1}, we also have
    \begin{equation}
        \label{eq:invgw-expand}
        \inv Q_w=Q_{y^{-1/2}}[Q_{y^{1/2}}Q_{d'}Q_{y^{1/2}}]^{1/2}Q_{y^{-1/2}}       
    \end{equation}
    From \cref{sec:eucj}\ref{item:eucj-power} and \eqref{eq:eigval-q-bound}, we observe that
    $
        Q_{d'}=\inv Q_{y'} \ge \eigval_{\max}(y')^{-2} I
    $.
    Thus
    \begin{equation}
        \label{eq:invqw-lowerbound}
        \inv Q_w
        \ge \frac{1}{\eigval_{\max}(y')} 
            Q_{y^{-1/2}}[Q_y]^{1/2}Q_{y^{-1/2}}
        = \frac{1}{\eigval_{\max}(y')} 
            Q_{y^{-1/2}}
        \ge \frac{1}{\eigval_{\max}(y')\eigval_{\max}(y)}.
    \end{equation}
    This proves \eqref{eq:sigmatest-invqw-m-estim} and consequently \eqref{eq:strong-monotone-interior}.
\end{proof}

We now extend the estimate to the boundary of $\K$ with a penalty using the approximations form \cref{sec:approx}.

\begin{lemma}
    \label{lemma:barrier-strong-monotonicity-boundary-ext}
    Let $y, d \in \iK$ and $\realopty, \realoptd \in \K$ with $d = \inv y$, and $\realopty \jprod \realoptd = 0$. Suppose there exist $\esty,\estd \in \K$ such that
    \begin{equation}
    	\label{eq:barrier-strong-monotonicity-boundary-ext-ass}
    	\iprod{\realoptd-\estd}{y-\realopty}=0
    	\quad\text{and}\quad \esty \jprod \estd = e.
    \end{equation}
    Then for any $\alpha \in (0, 1)$ and any $a \in \iK$ holds
    \begin{equation}
        \label{eq:barrier-strong-monotonicity-boundary-ext}
        -\iprod{d-\realoptd}{y-\realopty}
        \ge
        \frac{1-\alpha}{\eigval_{\max}(\tilde y)\eigval_{\max}(\estyTrans)}\norm{y-\realopty}_{Q_a}^2
        -\frac{\eigval_{\max}(d)\eigval_{\max}(\estd)}{4\alpha}\norm{\esty-\realopty}^2,
    \end{equation}
    where $\tilde y \defeq Q_a^{1/2} y$, and $\estyTrans \defeq Q_a^{1/2} \esty$.
\end{lemma}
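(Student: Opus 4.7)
The plan is to combine the orthogonality hypothesis, the Nesterov--Todd factorisation that underlies \cref{lemma:barrier-strong-monotonicity}, and a weighted Young's inequality, finishing with an operator upper bound on $\inv Q_w$ that survives the $Q_a$-scaling.

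First, the orthogonality assumption in \eqref{eq:barrier-strong-monotonicity-boundary-ext-ass} lets me replace $\realoptd$ by $\estd$: $-\iprod{d-\realoptd}{y-\realopty} = -\iprod{d-\estd}{y-\realopty}$. Since $\esty \jprod \estd = e$ forces $\estd = \inv\esty$, I pass via \cref{lemma:scaling} to the transformed variables $\tilde y = Q_a^{1/2} y$, $\estyTrans = Q_a^{1/2}\esty$, $\realoptyTrans \defeq Q_a^{1/2}\realopty$, $\utilde d = Q_a^{-1/2}d$ and $\estdTrans = Q_a^{-1/2}\estd$, for which $\utilde d = \inv{\tilde y}$ and $\estdTrans = \inv{\estyTrans}$; by self-adjointness of $Q_a$ the inner product above equals $-\iprod{\utilde d - \estdTrans}{\tilde y - \realoptyTrans}$.

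The proof of \cref{lemma:barrier-strong-monotonicity} produces a Nesterov--Todd element $w \in \iK$ (applied to $\tilde y, \estyTrans$) satisfying $\utilde d - \estdTrans = -\inv Q_{w}(\tilde y - \estyTrans)$ together with the operator bound $\inv Q_{w} \ge C\, I$ for $C = 1/(\eigval_{\max}(\tilde y)\eigval_{\max}(\estyTrans))$. Substituting this representation and decomposing $\tilde y - \estyTrans = (\tilde y - \realoptyTrans) - (\estyTrans - \realoptyTrans)$ yields
\begin{equation*}
-\iprod{d-\estd}{y-\realopty} = \iprod{\inv Q_{w}(\tilde y - \realoptyTrans)}{\tilde y - \realoptyTrans} - \iprod{\inv Q_{w}(\estyTrans - \realoptyTrans)}{\tilde y - \realoptyTrans}.
\end{equation*}
To the cross term I apply the weighted Young's inequality $\iprod{Pu}{v} \le \alpha\iprod{Pv}{v} + \tfrac{1}{4\alpha}\iprod{Pu}{u}$, which holds for every positive self-adjoint $P$ via $\iprod{Pu}{v} = \iprod{P^{1/2}u}{P^{1/2}v}$ and standard scalar Young. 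This gives the lower bound
\begin{equation*}
-\iprod{d-\estd}{y-\realopty} \ge (1-\alpha)\iprod{\inv Q_w(\tilde y - \realoptyTrans)}{\tilde y - \realoptyTrans} - \tfrac{1}{4\alpha}\iprod{\inv Q_w(\estyTrans - \realoptyTrans)}{\estyTrans - \realoptyTrans}.
\end{equation*}
The first term is bounded below by $(1-\alpha)C\norm{\tilde y - \realoptyTrans}^2 = (1-\alpha)C\norm{y-\realopty}^2_{Q_a}$, which is exactly the main term of the claim.

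The main obstacle is the penalty. To match the coefficient $\eigval_{\max}(d)\eigval_{\max}(\estd)/(4\alpha)$ and the \emph{unscaled} norm $\norm{\esty-\realopty}^2$ on the right-hand side, I need to show that $Q_a^{1/2}\inv Q_w Q_a^{1/2} \le \eigval_{\max}(d)\eigval_{\max}(\estd)\, I$, so that the factors from $\estyTrans - \realoptyTrans = Q_a^{1/2}(\esty - \realopty)$ cancel cleanly. For this I would use the symmetric companion of the representation \eqref{eq:invgw-expand}, namely $\inv Q_w = Q_{\estyTrans^{-1/2}}[Q_{\estyTrans^{1/2}}Q_{\utilde d}Q_{\estyTrans^{1/2}}]^{1/2}Q_{\estyTrans^{-1/2}}$, combined with the operator bound $Q_{\utilde d} \le \eigval_{\max}(\utilde d)^2 I$ and the simplification $Q_{z^{-1/2}}Q_{z^{1/2}}Q_{z^{-1/2}} = Q_z^{-1/2}$ established in the proof of \cref{lemma:barrier-strong-monotonicity}. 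Pulling the $Q_a^{1/2}$ factors through using the fundamental formula $Q_{Q_a^{1/2}\esty} = Q_a^{1/2} Q_\esty Q_a^{1/2}$ should leave only the product of original maximal eigenvalues, delivering the stated penalty.
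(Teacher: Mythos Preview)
Your overall strategy matches the paper's: replace $\realoptd$ by $\estd$ via the orthogonality hypothesis, invoke the Nesterov--Todd element from \cref{lemma:barrier-strong-monotonicity} to rewrite $-\iprod{d-\estd}{y-\realopty}$ as a $\inv Q_w$-inner product, split via Cauchy/Young, and then bound $\inv Q_w$ from below (for the main term) and above (for the penalty). The difference is the order of operations: you transform by $Q_a^{1/2}$ \emph{before} introducing $w$, whereas the paper works in the original coordinates, introduces the untransformed $w$, and only transforms afterwards for the lower bound. For the lower bound, both routes are equivalent and yield the same constant $1/(\eigval_{\max}(\tilde y)\eigval_{\max}(\estyTrans))$.

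The gap is in your penalty argument. After bounding $Q_{\utilde d}\le\eigval_{\max}(\utilde d)^2 I$ in the $\estyTrans$-representation, you obtain $\inv Q_w \le \eigval_{\max}(\utilde d)\,Q_{\estyTrans}^{-1/2}$, and are then left with $Q_a^{1/2}Q_{\estyTrans}^{-1/2}Q_a^{1/2}$ and the \emph{transformed} eigenvalue $\eigval_{\max}(\utilde d)$. The fundamental formula gives $Q_{\estyTrans}=Q_a^{1/2}Q_{\esty}Q_a^{1/2}$, but it does \emph{not} give a clean expression for $Q_{\estyTrans}^{-1/2}$ in terms of $Q_{\esty}^{-1/2}$ and $Q_a$ (square roots and conjugation do not commute), so ``pulling the $Q_a^{1/2}$ factors through'' does not deliver the untransformed product $\eigval_{\max}(d)\eigval_{\max}(\estd)$ as you claim. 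The paper avoids this entirely: since $Q_a^{1/2}\inv Q_{w}Q_a^{1/2}$ (your $w$) is exactly $\inv Q_{w_0}$ for the \emph{untransformed} Nesterov--Todd element $w_0$ associated to $(y,\esty)$, one can use the dual representation
\[
\inv Q_{w_0}=Q_{d^{1/2}}\bigl[Q_{d^{-1/2}}Q_{\estd}Q_{d^{-1/2}}\bigr]^{1/2}Q_{d^{1/2}}
\]
directly (this is \eqref{eq:invgw-expand} with the roles of $y$ and $d$ swapped). Bounding $Q_{\estd}\le\eigval_{\max}(\estd)^2 I$ there and simplifying $Q_{d^{1/2}}Q_{d^{-1/2}}Q_{d^{1/2}}=Q_{d^{1/2}}$ gives $\inv Q_{w_0}\le\eigval_{\max}(\estd)\eigval_{\max}(d)I$ with no $Q_a$ involved at all.
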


\begin{proof}
    Let $Q_w$ be as in the proof of \cref{lemma:barrier-strong-monotonicity}.    
    \begin{equation}
    	\label{eq:barrier-strong-monotonicity-boundary-ext0}
    	\def\arraystretch{1.5}
        \begin{array}{rcl}
        -\iprod{d-\realoptd}{y-\realopty}
        &
        \overset{\eqref{eq:barrier-strong-monotonicity-boundary-ext-ass}}{=}
        &
        -\iprod{d-\estd}{y-\realopty}
        =
        \iprod{y-\esty}{y-\realopty}_{\inv Q_w}
        \\
        &
        =
        &
        \iprod{y-\realopty}{y-\realopty}_{\inv Q_w}
        +\iprod{\realopty-\esty}{y-\realopty}_{\inv Q_w}
        \\
        &
        \ge
        &
        (1-\alpha)\norm{y-\realopty}_{\inv Q_w}^2
        -\frac{1}{4\alpha}\norm{\esty-\realopty}_{\inv Q_w}^2.
        \end{array}
    \end{equation}
    In the final step we have used Cauchy's inequality.   

    Let $\utilde w \defeq Q_{a^{1/2}} w$.
    By the fundamental formula of quadratic presentations (\cref{sec:eucj}\ref{item:eucj-fundamental}), 
        $$
        \inv Q_w
        =
        Q_a^{1/2}\inv Q_{Q_a^{1/2} w}Q_a^{1/2}=Q_a^{1/2}\inv Q_{\utilde w}Q_a^{1/2}.
    $$
	We also observe using fundamental formula of quadratic presentations that $\utilde w$ is $w$ from \eqref{eq:w-transform-nt} computed with the transformed variables $\tilde y=Q_a^{1/2} y$ and $\estdTrans=Q_{a^{-1/2}} \estd$.
    We therefore estimate $\inv Q_{\tilde w}$ as in \eqref{eq:invqw-lowerbound}.
    Since \eqref{eq:invgw-expand} implies
    \[
        \inv Q_w=Q_{d^{1/2}}[Q_{d^{-1/2}}Q_{d'}Q_{d^{-1/2}}]^{1/2}Q_{d^{1/2}},
    \]
    we also estimate $\inv Q_w \le \eigval_{\max}(d') \eigval_{\max}(d)$.
    Thus \eqref{eq:barrier-strong-monotonicity-boundary-ext} follows from \eqref{eq:barrier-strong-monotonicity-boundary-ext0}.
\end{proof}

\begin{lemma}
    \label{lemma:barrier-strong-monotonicity-boundary-ext-mu}
    Let $y, d \in \iK$ and $\realopty, \realoptd \in \K$ with $u \jprod d = \mu e$ for some $\mu>0$, and $\realopty \jprod \realoptd = 0$. Suppose there exist $\esty,\estd \in \K$ such that $\iprod{\realoptd-\estd}{y-\realopty}=0$ and $\esty \jprod \estd= \mu e$. Then for any $\alpha \in (0, 1)$ holds
    \begin{equation}
        \label{eq:barrier-strong-monotonicity-boundary-ext-mu}
        -\iprod{d-\realoptd}{y-\realopty}
        \ge
        \frac{(1-\alpha)\mu}{\eigval_{\max}(\tilde y)\eigval_{\max}(\estyTrans)}\norm{y-\esty}_{Q_a}^2
        -\frac{\eigval_{\max}(d)\eigval_{\max}(\estd)}{4\alpha\mu}\norm{\esty-\realopty}^2.
    \end{equation}
\end{lemma}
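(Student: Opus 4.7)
The plan is to mirror the proof of \cref{lemma:barrier-strong-monotonicity-boundary-ext} very closely, absorbing the parameter $\mu$ by rescaling and then applying a slightly different Cauchy split to produce the $\norm{y-\esty}_{Q_a}^2$ term the statement calls for. First, as in the previous lemma, I would use $\iprod{\realoptd-\estd}{y-\realopty}=0$ to replace $\realoptd$ with $\estd$:
\[
    -\iprod{d-\realoptd}{y-\realopty} = -\iprod{d-\estd}{y-\realopty}.
\]
The condition $y \jprod d = \mu e$ means $d = \mu \inv y$, and similarly $\estd = \mu \inv \esty$. Therefore the Nesterov--Todd scaling $w \in \iK$ of \cite{nesterov-todd} applied to $y$ and $\esty$ (that is, the unique $w$ satisfying $\inv y = \inv Q_w \esty$ and $\inv \esty = \inv Q_w y$) gives $d - \estd = \mu \inv Q_w(\esty - y)$, so that
\[
    -\iprod{d-\estd}{y-\realopty}
    = \mu \iprod{y-\esty}{y-\realopty}_{\inv Q_w}.
\]

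The key departure from the previous proof is the split. Rather than writing $y-\esty = (y-\realopty)+(\realopty-\esty)$, I would write $y-\realopty = (y-\esty)+(\esty-\realopty)$ and apply Cauchy's inequality with parameter $\alpha \in (0,1)$, obtaining
\[
    \iprod{y-\esty}{y-\realopty}_{\inv Q_w}
    \ge (1-\alpha)\norm{y-\esty}_{\inv Q_w}^2
        - \frac{1}{4\alpha}\norm{\esty-\realopty}_{\inv Q_w}^2.
\]
This is the only structural change; everything else is quoted from the previous proof.

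It then remains to re-use the bounds on $\inv Q_w$ established inside the proof of \cref{lemma:barrier-strong-monotonicity-boundary-ext}. For the lower bound, introducing $\utilde w \defeq Q_{a^{1/2}} w$ and using the fundamental formula \cref{sec:eucj}\ref{item:eucj-fundamental}, I get $\norm{y-\esty}_{\inv Q_w}^2 = \norm{\tilde y - \estyTrans}_{\inv Q_{\utilde w}}^2$, and the estimate \eqref{eq:invqw-lowerbound} applied in the transformed variables yields
\[
    \norm{y-\esty}_{\inv Q_w}^2
    \ge \frac{1}{\eigval_{\max}(\tilde y)\eigval_{\max}(\estyTrans)}\norm{y-\esty}_{Q_a}^2.
\]
For the upper bound, the identity $\inv Q_w = Q_{(\inv y)^{1/2}}[\,\cdots\,]^{1/2}Q_{(\inv y)^{1/2}}$ from \eqref{eq:invgw-expand} gives $\inv Q_w \le \eigval_{\max}(\inv y)\eigval_{\max}(\inv \esty)$, which after using $\eigval_{\max}(\inv y) = \eigval_{\max}(d)/\mu$ and similarly for $\esty$ becomes $\inv Q_w \le \eigval_{\max}(d)\eigval_{\max}(\estd)/\mu^2$, so
\[
    \norm{\esty-\realopty}_{\inv Q_w}^2
    \le \frac{\eigval_{\max}(d)\eigval_{\max}(\estd)}{\mu^2}\norm{\esty-\realopty}^2.
\]
Substituting both bounds and multiplying through by the $\mu$ factor produced in the second step gives exactly \eqref{eq:barrier-strong-monotonicity-boundary-ext-mu}.

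The main thing to keep careful track of, though it is more bookkeeping than obstacle, is the placement of the $\mu$ factors: one $\mu$ comes out front from $d - \estd = \mu\inv Q_w(\esty-y)$, while $\mu^{-2}$ appears in the upper bound on $\inv Q_w$ because $d$ and $\estd$ now play the role that $\inv y$ and $\inv \esty$ played in the previous lemma. After cancellation these give the asymmetric $\mu$ and $1/\mu$ coefficients in the final estimate, so it is worth being meticulous to match the stated formula.
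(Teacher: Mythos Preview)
Your proof is correct and matches the stated inequality with the $\norm{y-\esty}_{Q_a}^2$ term. The paper's own argument is different and shorter: it simply rescales, replacing $d,\estd,\realoptd$ by $d/\mu,\estd/\mu,\realoptd/\mu$, and then invokes \cref{lemma:barrier-strong-monotonicity-boundary-ext} verbatim, multiplying the resulting estimate by $\mu$. That one-line reduction avoids reopening the Nesterov--Todd construction and re-deriving the bounds on $\inv Q_w$.

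There is, however, a small wrinkle worth noting. The previous lemma's conclusion \eqref{eq:barrier-strong-monotonicity-boundary-ext} carries $\norm{y-\realopty}_{Q_a}^2$, so the paper's rescaling literally yields $\norm{y-\realopty}_{Q_a}^2$ rather than the $\norm{y-\esty}_{Q_a}^2$ printed in the statement. Your alternative Cauchy split---decomposing $y-\realopty=(y-\esty)+(\esty-\realopty)$ instead of $y-\esty=(y-\realopty)+(\realopty-\esty)$---is precisely what produces the $\norm{y-\esty}_{Q_a}^2$ form directly. In that sense your longer route is actually more faithful to the statement as written; the paper's proof implicitly relies on either a typo in the statement or an unwritten adjustment of the Cauchy step inside the cited lemma.
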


\begin{proof}
    We apply \cref{lemma:barrier-strong-monotonicity-boundary-ext} with $\realoptd$, $d$, and $\estd$ replaced by $\realoptd/\mu$, $d/\mu$, and $\estd/\mu$.
    This causes the right-hand-side of the estimate \eqref{eq:barrier-strong-monotonicity-boundary-ext} to be multiplied by $\mu$, along with both $\eigval_{\max}(d)$ and $\eigval_{\max}(\estd)$ to be divided by $\mu$.
\end{proof}

Applied to solutions of \eqref{eq:ip-system-reg}, we can estimate $\eigval_{\max}(y)$ and $\eigval_{\max}(y')$.

\begin{proposition}
    \label{proposition:barrier-strong-monotonicity-sclp}
    Suppose $Ay=b$ implies $\iprod{a}{y}=b_0$ for some $a \in \iK$ and $b_0>0$.
    Fix $\mu > 0$, and let $(y, d, z) \in \iK \times \iK \times \R^k$ solve \eqref{eq:ip-system-reg}. Likewise, suppose  $(\optyMU, \optdMU, \optzMU) \in \iK \times \iK \times \R^k$ solves \eqref{eq:ip-system-reg} for $c=\hat c$, where $(\realopty, \realoptd, \realoptz)$ solves \eqref{eq:ip-system} for $c=\hat c$.
    If $\realopty$ and $\realoptd$ are strictly complementary, $\realoptd$ dual non-degenerate, and $\realopty$ primal non-degenerate, then for any $\alpha \in (0, 1)$ holds
    \begin{equation}
        \label{eq:barrier-strong-monotonicity-sclp}
        - \iprod{d-\realoptd}{y - \realopty}
        \ge
        \frac{(1-\alpha)\mu}{b_0^2}\norm{y - \realopty}_{Q_a}^2
        -\frac{C_{c,\mu} C_{\hat c, \mu}r}{\alpha\eigval_{\min}(M_{\realopty,\realoptd})^2}\mu,
    \end{equation}
    where for some fixed $y^* \in \iK$ with $Ay^*=b$ the constants
    \begin{equation}
    	\label{eq:ccmu}
        C_{c,\mu} \defeq \frac{\mu r + 2b_0 \norm{c}_{\inv Q_a}}{\eigval_{\min}(y^*)}.
    \end{equation}
\end{proposition}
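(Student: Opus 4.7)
The plan is to apply \cref{lemma:barrier-strong-monotonicity-boundary-ext-mu} with the witnesses $\esty = \optyMU$ and $\estd = \optdMU$, and then to control each coefficient on the right-hand side using the SCLP structure. Both hypotheses of the lemma are immediate: $\optyMU \jprod \optdMU = \mu e$ holds by \eqref{eq:ip-system-reg}, while $\iprod{\realoptd - \optdMU}{y - \realopty} = 0$ follows because $y - \realopty \in \nullspace{A}$ and $\realoptd - \optdMU = A^*(\realoptz - \optzMU) \in \range{A^*}$, the latter because $\optdMU$ and $\realoptd$ are both defined through the common cost $\hat c$, so the $\hat c$'s cancel upon subtraction. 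This produces an estimate in which the positive term is of order $\norm{y-\realopty}_{Q_a}^2$ and the penalty is of order $\norm{\optyMU-\realopty}^2$, and it remains to control the coefficients.

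To bound the positive coefficient by $b_0^2$, I would use $Q_a^{1/2} = Q_{a^{1/2}}$ from \cref{sec:eucj}\ref{item:eucj-power} together with $Q_x e = x^2$ to conclude $Q_a^{1/2} e = a$. The hyperplane hypothesis then gives $\tr \tilde y = \iprod{Q_a^{1/2} y}{e} = \iprod{y}{a} = b_0$, and since $\tilde y \in \K$ one has $\eigval_{\max}(\tilde y) \le \tr \tilde y = b_0$; the identical argument applies to $\optyMUtrans$.

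The central step is the dual bound $\eigval_{\max}(d) \le C_{c,\mu}$ (analogously $\eigval_{\max}(\optdMU) \le C_{\hat c,\mu}$). Pairing $d = A^* z + c$ against $y^*$ and $y$ and subtracting yields $\iprod{y^*}{d} = \iprod{y}{d} + \iprod{c}{y^*-y}$; the complementarity relation gives $\iprod{y}{d} = \tr(\mu e) = \mu r$, hence $\iprod{y^*}{d} = \mu r + \iprod{c}{y^*-y}$. Dualising through $Q_a^{1/2}$, I have $|\iprod{c}{y^*-y}| \le \norm{c}_{\inv Q_a} \norm{\tilde y^* - \tilde y}$, and because each of $\tilde y^*, \tilde y \in \K$ has trace $b_0$ their Frobenius norms are bounded by $b_0$, giving $|\iprod{c}{y^*-y}| \le 2 b_0 \norm{c}_{\inv Q_a}$. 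Finally, since $y^* - \eigval_{\min}(y^*) e \in \K$, property \cref{sec:symmc}\ref{property:sc-iprod-a} yields $\iprod{y^*}{d} \ge \eigval_{\min}(y^*) \tr(d) \ge \eigval_{\min}(y^*) \eigval_{\max}(d)$, producing the stated bound.

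Combining these three estimates with \cref{lemma:central-path-approximation}, which gives $\norm{\optyMU-\realopty}^2 \le 4\mu^2 r / \eigval_{\min}(M_{\realopty,\realoptd})^2$, then assembles \eqref{eq:barrier-strong-monotonicity-sclp} after minor bookkeeping. The main obstacle, in my view, is the dual bound on $\eigval_{\max}(d)$: it requires blending the affine feasibility constraint $\iprod{a}{y}=b_0$ with the Nesterov--Todd-style scaling $Q_a^{1/2}$ to translate primal feasibility into uniform control of the dual iterate, which is why the constant $C_{c,\mu}$ acquires the unusual dual-norm factor $\norm{c}_{\inv Q_a}$.
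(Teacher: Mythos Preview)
Your proposal is correct and follows essentially the same route as the paper: apply \cref{lemma:barrier-strong-monotonicity-boundary-ext-mu} with $(\esty,\estd)=(\optyMU,\optdMU)$, bound $\eigval_{\max}(\tilde y)$ and $\eigval_{\max}(\optyMUtrans)$ by $b_0$ via the trace identity $\tr \tilde y=\iprod{a}{y}=b_0$, bound $\eigval_{\max}(d)$ and $\eigval_{\max}(\optdMU)$ through the pairing against an interior feasible $y^*$ (exploiting $A(y^*-y)=0$ to replace $d$ by $c$), and finally insert the central-path estimate from \cref{lemma:central-path-approximation}. The only cosmetic difference is that the paper carries out the Cauchy--Schwarz step after passing to the scaled variables $\utilde c, \tilde y$, whereas you dualise through $Q_a^{1/2}$ explicitly; the resulting bound $2b_0\norm{c}_{\inv Q_a}$ is identical.
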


\begin{proof}
    We begin by applying \cref{lemma:barrier-strong-monotonicity-boundary-ext-mu} with $(\esty, \estd)$ set to the $\mu$-approximation $(\optyMU, \optdMU)$ to $(\realopty,\realoptd)$ provided by \cref{lemma:central-path-approximation}. Inserting \eqref{eq:central-path-approximation} into \eqref{eq:barrier-strong-monotonicity-boundary-ext-mu}, we therefore obtain
    \begin{equation}
        \label{eq:barrier-strong-monotonicity-sclp0}
        - \iprod{d-\realoptd}{y - \realopty}
        \ge
        \frac{(1-\alpha)\mu}{\eigval_{\max}(\tilde y)\eigval_{\max}(\optyMUtrans)}\norm{y-\optyMU}_{Q_a}^2
        -\frac{\mu\eigval_{\max}(d)\eigval_{\max}(\optdMU) r}{\alpha\eigval_{\min}(M_{\realopty,\realoptd})^2}.
    \end{equation}
    It remains to estimate the eigenvalues in this expression.

    First of all, we easily derive the necessary bounds on $\lambda_{\max}(\tilde y)$ and $\lambda_{\max}(\esty)$ as
    \begin{equation}
    	\label{eq:barrier-strong-monotonicity-sclp-lambday}
    	\lambda_{\max}(\tilde y)
    	\le \tr(\tilde y)
    	=\iprod{e}{\tilde y}
    	=\iprod{a}{y}=b_0.
    \end{equation}

    Secondly, regarding the estimate on $\eigval_{\max}(d)$, we fix some $y^* \in \iK$ satisfying $Ay^*=b$. Such a point exist by our assumption of there existing solutions to \eqref{eq:ip-system-reg}; see also \cref{lemma:sclp-existence}.
    Since $d=A^*z+c$ for some $z \in \R^k$, and $d \jprod y = \mu e$, we then derive
    \[
        \begin{split}
        \eigval_{\min}(y^*)\eigval_{\max}(d)
        &
        \le \eigval_{\min}(y^*)\iprod{e}{d}
        \le \iprod{y^*}{d}
        = \iprod{\tilde y^*}{\utilde d}
        \\
        &
        = \iprod{\tilde y}{\utilde d} + \iprod{\tilde y^*-\tilde y}{\utilde d} 
        = \mu r + \iprod{\tilde y^*-\tilde y}{\utilde c}
        \\
        &
        \le \mu r + \norm{\utilde c}(\eigval_{\max}(\tilde y)+\eigval_{\max}(\tilde y^*))
        \le \mu r + 2b_0 \norm{\utilde c}.
        \end{split}
    \]
    In the last inequality we have used \eqref{eq:barrier-strong-monotonicity-sclp-lambday} for both $\tilde y$ and $\tilde y^*$.
    Since $y^* \in \iK$, so that $\eigval_{\min}(y^*)>0$, and $\norm{\utilde c}= \norm{c}_{\inv Q_a}$, this gives the claimed bounds on $\eigval_{\max}(d)$ and  $\eigval_{\max}(\estd)$.
\end{proof}

\begin{remark}
    \label{rem:unbounded-general}
    In \cref{proposition:barrier-strong-monotonicity-sclp}, the assumption that $Ay=b$ implies $\iprod{a}{y}=b_0$ for some $a \in \iK$ was only used to derive the bound \eqref{eq:barrier-strong-monotonicity-sclp-lambday} on the maximum eigenvalues of the transformed variable $\tilde y=Q_a^{1/2}y$. If we did not have this assumption, we could still bound the eigenvalues of the untransformed variable $y$ in a local neighbourhood of $\realopty$.
    Since the factor in front of $\norm{y - \realopty}_{Q_a}^2$ in particular would now depend on $\realopty$, doing so would, however, require a more local convergence analysis in \cref{sec:pedi}.
\end{remark}

\subsection{Strong monotonicity of the barrier in the second-order cone}

In the second-order cone $\K=\K_\soc \subset \E_{1+m}$, under suitable constraints $Ay=b$, we have a stronger result.

\begin{lemma}
    \label{lemma:barrier-strong-monotonicity-soc}
    Suppose $y, y', d, d' \in \iK_\soc$ with $y \circ d=y' \circ d'=\mu e$ for given $\mu>0$. Then
    \begin{equation}
        \label{eq:barrier-strong-monotonicity-soc}
        -\iprod{d-d'}{y-y'}_\J
        \ge
        \frac{\det(d)+\det(d')}{\mu}\norm{y - y'}^2_{-R},
    \end{equation}
    where $\norm{y - y'}^2_{-R} \defeq \norm{\basepart y-\basepart y'}_{\R^m}^2-(y_0-y_0')^2=-\det(y-y')$.
\end{lemma}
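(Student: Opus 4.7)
The plan is to prove the inequality by direct algebraic computation, exploiting the fact that in the quadratic-form Jordan algebra $\E_{1+m}$ the inverse, determinant, and Jordan product all have simple closed-form expressions in terms of the diagonal mirroring operator $R=\left(\begin{smallmatrix} 1 & 0 \\ 0 & -I\end{smallmatrix}\right)$. Specifically, from \cref{ex:qforms} I will use $\iprod{u}{v}_\J = 2 u^T v$, $\det u = u^T R u$, and $\inv u = R u/\det u$ for $u \in \iK_\soc$. Introduce the shorthand $\delta \defeq \det y$, $\delta' \defeq \det y'$, and $\rho \defeq y^T R y' = (y')^T R y$. Since $y \jprod d=\mu e$ forces $d = \mu \inv y = \mu R y/\delta$, and similarly $d'=\mu R y'/\delta'$, every quantity in the desired inequality becomes a rational function of $\delta$, $\delta'$, and $\rho$.

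First I would expand the left-hand side. Using $y^T R y = \delta$, $(y')^T R y' = \delta'$, and the definition of $\rho$, one computes
\begin{equation*}
-\iprod{d-d'}{y-y'}_\J
= -2\mu\Bigl[\tfrac{y^T R(y-y')}{\delta}-\tfrac{(y')^T R(y-y')}{\delta'}\Bigr]
= 2\mu\Bigl[\rho\bigl(\tfrac{1}{\delta}+\tfrac{1}{\delta'}\bigr)-2\Bigr].
\end{equation*}
Next I would expand the right-hand side. Since $\det d = \mu^2 y^T R y/\delta^2 = \mu^2/\delta$ and likewise $\det d'=\mu^2/\delta'$, and since
\begin{equation*}
\norm{y-y'}^2_{-R} = -(y-y')^T R(y-y') = 2\rho-\delta-\delta',
\end{equation*}
we find
\begin{equation*}
\frac{\det(d)+\det(d')}{\mu}\norm{y-y'}^2_{-R}
= \mu\bigl(\tfrac{1}{\delta}+\tfrac{1}{\delta'}\bigr)(2\rho-\delta-\delta').
\end{equation*}

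Subtracting the right-hand side from the left-hand side and dividing by $\mu>0$, the desired inequality reduces, after cancelling the common factor $2\rho(1/\delta+1/\delta')$, to
\begin{equation*}
\tfrac{\delta}{\delta'}+\tfrac{\delta'}{\delta} \ge 2,
\end{equation*}
which holds by AM--GM since $y,y' \in \iK_\soc$ guarantees $\delta,\delta'>0$. I do not anticipate a real obstacle here: the only technical point is careful bookkeeping of the factor of $2$ in the inner product and the sign conventions of $R$, and the clean reduction to AM--GM reflects the fact that $\det$ is a genuine quadratic form on $\E_{1+m}$ (which is precisely why the argument is special to the second-order cone and does not extend to general symmetric cones).
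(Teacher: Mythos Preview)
Your proof is correct and is essentially the same direct computation as the paper's. The only cosmetic difference is parametrisation: the paper works with $\beta=\det(d)/\mu$ and $\beta'=\det(d')/\mu$, obtains the identity
\[
-\iprod{d-d'}{y-y'}_\J=(\beta+\beta')\norm{y-y'}^2_{-R}+(\beta-\beta')(\norm{y}^2_{-R}-\norm{y'}^2_{-R})
\]
via a symmetrisation trick, and recognises the second term as $\mu(\det(d')-\det(d))^2/(\det(d)\det(d'))\ge 0$; your residual $\mu(\delta/\delta'+\delta'/\delta-2)=\mu(\delta-\delta')^2/(\delta\delta')$ is exactly the same quantity rewritten in terms of $\delta=\det y$, $\delta'=\det y'$, so the AM--GM step and the paper's ``manifestly a square'' step are identical.
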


\begin{proof}
    We have $d=\mu Ry/\det(y)=\inv\mu \det(d) Ry$. Likewise $d'=\inv\mu \det(d') Ry'$.
    We write for brevity $\beta \defeq \inv\mu \det(d)$ and $\beta' \defeq \inv\mu \det(d')$.
    Then
    \begin{equation*}
        -\iprod{d-d'}{y-y'}_\J
        =
        - \iprod{\beta Ry-\beta' R y'}{y-y'}_\J
        =
        2\iprod{\beta y - \beta' y'}{y-y'}_{-R},
    \end{equation*}
    where the second ``inner product'' is $\iprod{x}{y}_{-R} \defeq -\iprod{Rx}{y}_{\R^{1+m}}$.
    We can thus write
    \[
        -\iprod{d-d'}{y-y'}_\J
        =
        2\beta\norm{y - y'}^2_{-R}
        +2(\beta-\beta')\iprod{y'}{y-y'}_{-R}
    \]
    as well as
    \[
        -\iprod{d-d'}{y-y'}_\J
        =
        2\beta'\norm{y - y'}^2_{-R}
        +2(\beta-\beta')\iprod{y}{y-y'}_{-R}.
    \]
    Summing these two expressions we deduce
    \begin{equation}
        \label{eq:barrier-strong-monotonicity-soc-est1}
        -\iprod{d-d'}{y-y'}_\J
        =
        (\beta+\beta')\norm{y - y'}^2_{-R}
        + (\beta-\beta')(\norm{y}^2_{-R} - \norm{y'}^2_{-R}).
    \end{equation}
    Now observe that
    \[
        \norm{y}^2_{-R}=y_0^2-\norm{\basepart y}^2=-\det(y)=-\mu^2/\det(d).
    \]
    Thus
    \[
        \begin{split}
        (\beta-\beta')(\norm{y}^2_{-R} - \norm{y'}^2_{-R})
        &
        =\mu(\det(d)-\det(d'))(\inv{\det(d')}-\inv{\det(d)})
        \\
        &
        =\mu(\det(d')-\det(d))^2/(\det(d)\det(d')) > 0.
        \end{split}
    \]
    This and \eqref{eq:barrier-strong-monotonicity-soc-est1} immediately prove the claim.
\end{proof}

For solutions of \eqref{eq:ip-system-reg} with one-dimensional linear constraints, we can extend the estimate to the boundary with some penalty. For this, we first bound the determinant with the distance
\[
    D_F(w, d) \defeq \norm{Q_w^{1/2}d-\mu_{w,d}e}
    \quad\text{for}\quad \mu_{w,d}=\iprod{w}{d}/r,
    \quad  (w, d \in \K).
\]
This distance is typically used to define the so-called short-step neighbourhood of the central path \cite[see, e.g.,][]{as-2003}.

\begin{lemma}
    \label{lemma:soc-det-estim}
    Suppose $y, d \in \iK_{\soc}$ with $y \jprod d = \mu e$ and $\iprod{a}{y}=b_0$ for some $\mu,b_0>0$ and $a \in \iK_{\soc}$.
    Then
    \begin{equation}
        \label{eq:soc-det-estim}
        \frac{2\mu^2+\sqrt{2}b_0D_F(\inv a,d)\mu}{b_0^2\det(a)}
        \le \det(d)
        \le \frac{4\mu^2+\sqrt{2}b_0D_F(\inv a,d)\mu}{b_0^2\det(a)}.
    \end{equation}
\end{lemma}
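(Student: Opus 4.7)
The plan is to reduce the estimate to an explicit computation after a Nesterov--Todd-style scaling. Introduce $\tilde y \defeq Q_a^{1/2} y$ and $\utilde d \defeq Q_a^{-1/2} d$; by \cref{lemma:scaling}\ref{item:scaling-mu} these satisfy $\tilde y \jprod \utilde d = \mu e$. Since $Q_a^{1/2} e = a$, the linear constraint $\iprod{a}{y}=b_0$ rewrites as $\iprod{e}{\tilde y}=b_0$, which via \eqref{eq:qforms-innerproduct} amounts to $\tilde y_0 = b_0/2$ in the $\E_{1+m}$ identification. Observing further that $Q_{\inv a}^{1/2}=Q_a^{-1/2}$ and $\mu_{\inv a,d}=\iprod{\inv a}{d}/2=\utilde d_0$, the target distance simplifies to
\[
    D_F(\inv a,d)=\norm{\utilde d-\utilde d_0 e}=\sqrt{2}\,\norm{\utilde{\bar d}}.
\]

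Next I would expand $\tilde y \jprod \utilde d = \mu e$ in the Jordan product on $\E_{1+m}$ into its scalar part $\tilde y^T \utilde d = \mu$ and vector part $\tilde y_0 \utilde{\bar d}+\utilde d_0 \tilde{\bar y}=0$. Eliminating $\tilde{\bar y}$ from the scalar equation using the vector one leads to the key identity
\[
    \tilde y_0 \det(\utilde d)=\mu \utilde d_0,
    \quad\text{i.e.,}\quad
    \det(\utilde d)=\frac{2\mu \utilde d_0}{b_0}.
\]
Combining this with $\det(\utilde d)=\utilde d_0^2-\tfrac12 D_F(\inv a,d)^2$ gives a quadratic in $\utilde d_0$ whose unique positive root is $\utilde d_0=\mu/b_0+\sqrt{\mu^2/b_0^2+D_F(\inv a,d)^2/2}$. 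Substituting back yields a closed-form expression for $\det(\utilde d)$, which I would then bracket using the elementary sub-/super-additivity bounds $D_F(\inv a,d)/\sqrt{2}\le \sqrt{\mu^2/b_0^2+D_F(\inv a,d)^2/2}\le \mu/b_0+D_F(\inv a,d)/\sqrt{2}$.

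The claimed two-sided inequality then follows by translating $\det(\utilde d)$ back to $\det(d)$ via \cref{sec:eucj}\ref{item:det-Qxy}, which ties the two determinants by a factor involving $\det(a)$. The only real care required is in the bookkeeping: tracking the factor $r=2$ between the Jordan-algebraic inner product \eqref{eq:qforms-innerproduct} and the Euclidean one on $\R^{1+m}$, and correctly extracting the single scalar identity $\tilde y_0\det(\utilde d)=\mu \utilde d_0$ from the two-component equation $\tilde y \jprod \utilde d=\mu e$. Everything else is elementary algebra and a quadratic formula.
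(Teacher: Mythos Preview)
Your proposal is correct and follows essentially the same route as the paper: scale by $Q_a^{\pm 1/2}$ to reduce to the case $a=e$, expand $\tilde y \jprod \utilde d=\mu e$ componentwise together with $2\tilde y_0=b_0$, solve the resulting quadratic for $\utilde d_0$, obtain the exact expression for $\det(\utilde d)$, bracket it via $\sqrt{\alpha^2+\beta^2}\in[\beta,\alpha+\beta]$, and convert back with \cref{sec:eucj}\ref{item:det-Qxy}. The only cosmetic difference is that you phrase the elimination as the single identity $\tilde y_0\det(\utilde d)=\mu\utilde d_0$, whereas the paper writes the equivalent quadratic $\utilde d_0^2-\norm{\utilde{\bar d}}^2-2\utilde d_0\mu/b_0=0$ before solving; the remaining steps and the identification $D_F(\inv a,d)=\sqrt{2}\,\norm{\utilde{\bar d}}$ are identical.
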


\begin{proof}
    We define $\tilde y \defeq Q_a^{1/2} y$, and $\utilde d \defeq Q_a^{-1/2} d$. Then $\iprod{e}{\tilde y}=\iprod{a}{y}=b_0$, and by \cite[Lemma 28]{as-2003}, $\tilde y \jprod \utilde d=\mu e$.
    These conditions expand to $\tilde y_0\utilde d_0+\tilde{{\basepart y}}^T\utilde{{\basepart d}}= \mu$,  $\tilde y_0\utilde{{\basepart d}} + \utilde d_0\tilde{{\basepart y}}=0$, and $2\tilde y_0=b_0$. (In the latter, recall that the $\E_{1+m}$-inner product satisfies $\iprod{e}{\tilde y}=2e^T\tilde y$.)
    We reduce this system to $\utilde d_0^2-\norm{\utilde{{\basepart d}}}^2-2\utilde d_0\mu/b_0=0$, from where we solve
    \begin{equation}
    	\label{eq:d0-solution}
        \utilde d_0=\frac{\mu+\sqrt{\mu^2+b_0^2\norm{\utilde{{\basepart d}}}^2}}{b_0}.
    \end{equation}
    Thus
    \[
        \det(\utilde d)
        =\utilde d_0^2-\norm{\utilde{{\basepart d}}}^2
        =\frac{2\mu^2+2\mu\sqrt{\mu^2+b_0^2\norm{\utilde{{\basepart d}}}^2}}{b_0^2},
    \]
    from which we easily estimate
    \begin{equation}
        \label{eq:soc-det-estim-e}
        \frac{2\mu^2+2\mu b_0\norm{\utilde{{\basepart d}}}}{b_0^2}
        \le \det(\utilde d)
        \le \frac{4\mu^2+2\mu b_0\norm{\utilde{{\basepart d}}}}{b_0^2}.
    \end{equation}

    To finish deriving \eqref{eq:soc-det-estim}, from \cref{sec:eucj}\ref{item:det-Qxy} we recall that
    $
        \det(\utilde d)=\det(a)\det(d)
    $.
    We also have $r\utilde d_0=\iprod{\utilde d}{e}=\iprod{d}{\inv a}$ for the rank $r=2$, so
    \begin{equation}
        \label{eq:soc-d-a-distance}
        \sqrt{2} \norm{\utilde{{\basepart d}}}_{\R^n}
        =\norm{\utilde d - \utilde d_0 e}_{\J}
        =\norm{Q_a^{-1/2}d-\mu_{\inv a,d}e}_{\J}
        =D_F(\inv a,d),
    \end{equation}
    where we emphasise the standard Euclidean norm on $\utilde{{\basepart d}} \in \R^n$ versus the $\sqrt{2}$-scaled standard norm on $\J$.
    With this, \eqref{eq:soc-det-estim-e} gives \eqref{eq:soc-det-estim}.
\end{proof}

If $D_F(\inv a, \realoptd)>0$, or alternatively $\det(\realopty)>0$, then the next proposition shows local strong monotonicity of the barrier for $d$ close to $\realoptd$ and $\mu>0$ small. Moreover, if $D_F(\inv a, \realoptd)>0$, the factor of strong monotonicity does not vanish as $\mu \downto 0$.

\begin{proposition}
    \label{proposition:barrier-strong-monotonicity-soc-extension}
    Let $\K=\K_\soc$, and suppose $Ay=b$ implies $\iprod{a}{y}=b_0$ for some $a \in \iK$ and $b_0>0$. 
    Let $(y, d, z) \in \iK \times \iK \times \R^k$ solve \eqref{eq:ip-system-reg}, and likewise that  $(\realopty, \realoptd, \realoptz) \in \K \times \K \times \R^k$ solve \eqref{eq:ip-system} for $c=\realopt{c}$.
    Then
    \begin{multline}    
        \label{eq:barrier-strong-monotonicity-soc-extension}
        -\iprod{d-\realoptd}{y-\realopty}
        \ge
        \frac{\mu+2^{-1/2}b_0[D_F(\inv a,d)+D_F(\inv a,\realoptd)]}{b_0^2/2}\norm{y - \realopty}_{Q_a}^2
        - \mu
        \\
        +\frac{2^{-1/2}b_0D_F(\inv a,\realoptd)\mu}{2\mu+2^{-1/2}b_0D_F(\inv a,d)}
        + \frac{\mu+2^{-1/2}b_0D_F(\inv a,d)}{b_0^2/2}\det(Q_a^{1/2}\realopty).
    \end{multline}
\end{proposition}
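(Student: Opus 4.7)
The plan is to pass to the scaled variables $\tilde y \defeq Q_a^{1/2} y$, $\utilde d \defeq Q_a^{-1/2} d$, $\realoptyTrans \defeq Q_a^{1/2}\realopty$, $\realoptdTrans \defeq Q_a^{-1/2}\realoptd$, to introduce short names $\bar y,\bar y',\bar d' \in \R^m$ for the spatial parts of $\tilde y,\realoptyTrans,\realoptdTrans$, and to compute the quantity $-\iprod{\utilde d - \realoptdTrans}{\tilde y - \realoptyTrans}$ \emph{exactly} before invoking \cref{lemma:soc-det-estim}. By \cref{lemma:scaling} the scaling preserves complementarity ($\tilde y \jprod \utilde d = \mu e$, $\realoptyTrans \jprod \realoptdTrans = 0$); self-adjointness of $Q_a^{1/2}$ rewrites both the inner product and $\norm{y-\realopty}_{Q_a}^2$ in the scaled variables; and the assumption that $Ay=b$ forces $\iprod{a}{y}=b_0$, applied to both $y$ and $\realopty$, gives the common zeroth component $\tilde y_0 = \realoptyTrans_0 = b_0/2$. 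Consequently $\tilde y - \realoptyTrans$ reduces to the purely spatial $\bar y - \bar y'$, where the $\K_\soc$ arithmetic simplifies dramatically.

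I will then expand $-\iprod{\utilde d}{\tilde y - \realoptyTrans}$ and $\iprod{\realoptdTrans}{\tilde y - \realoptyTrans}$ separately. For the first piece, the identity $\utilde d = \mu R\tilde y/\det(\tilde y)$ (as in the proof of \cref{lemma:barrier-strong-monotonicity-soc}), combined with $(R\tilde y)^T\tilde y = \det(\tilde y)$, polarisation of $\bar y^T \bar y'$, and the SOC identities $\norm{\bar y}^2 = b_0^2/4 - \det(\tilde y)$ and $\norm{\bar y'}^2 = b_0^2/4 - \det(\realoptyTrans)$, collapses everything into
\[
-\iprod{\utilde d}{\tilde y - \realoptyTrans} = -\mu + \frac{\mu}{\det(\tilde y)}\bigl[\det(\realoptyTrans) + \norm{\bar y - \bar y'}^2\bigr].
\]
For the second piece, $\realoptyTrans \jprod \realoptdTrans = 0$ with $\realoptyTrans_0 = b_0/2$ forces $\bar d' = -(2\realoptdTrans_0/b_0)\bar y'$ together with the dichotomy $\realoptdTrans = 0$ or $\realoptyTrans \in \bd\K$. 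Both branches are subsumed by the uniform identity $\realoptdTrans_0 = 2^{-1/2}D_F(\inv a,\realoptd)$---which follows from \eqref{eq:soc-d-a-distance} applied to $\realoptdTrans$ together with $\norm{\bar y'} = b_0/2$ in the boundary branch, and is trivial in the other branch. A parallel expansion then gives $\iprod{\realoptdTrans}{\tilde y} = (\sqrt{2}D_F(\inv a,\realoptd)/b_0)\bigl[\det(\tilde y) + \det(\realoptyTrans) + \norm{\bar y - \bar y'}^2\bigr]$.

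Adding these two identities and regrouping the common factor $\det(\realoptyTrans) + \norm{\bar y - \bar y'}^2$ yields an exact formula with coefficient $\mu/\det(\tilde y) + \sqrt{2}D_F(\inv a,\realoptd)/b_0$ in front of that bracket, a $-\mu$ penalty, and a residual summand $\sqrt{2}D_F(\inv a,\realoptd)\det(\tilde y)/b_0$. To finish I apply \cref{lemma:soc-det-estim} to $d$ together with the identity $\det(\tilde y)\det(\utilde d) = \mu^2$: the lower bound on $\mu/\det(\tilde y)$ yields the strong-monotonicity coefficient $(\mu + 2^{-1/2}b_0[D_F(\inv a,d)+D_F(\inv a,\realoptd)])/(b_0^2/2)$ in front of both $\norm{y-\realopty}_{Q_a}^2$ and $\det(Q_a^{1/2}\realopty) = \det(\realoptyTrans)$, while the lower bound on $\det(\tilde y)$ converts the residual into the rational bonus $2^{-1/2}b_0 D_F(\inv a,\realoptd)\mu/(2\mu + 2^{-1/2}b_0 D_F(\inv a,d))$; the $-\mu$ passes through unchanged.

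The hard part will be the careful bookkeeping of numerical factors. The rank $r=2$, the convention $\iprod{x}{y}_\J = 2x^Ty$ on $\E_{1+m}$, the $\sqrt{2}$ in $D_F(\inv a,d) = \sqrt{2}\norm{\bar d}$ from \eqref{eq:soc-d-a-distance}, and the factor relating $\norm{y-\realopty}_{Q_a}^2$ to $\norm{\bar y - \bar y'}^2$ all conspire to mix $2$'s and $\sqrt{2}$'s throughout, and each of the three target coefficients depends sensitively on which cancels where. Establishing the uniform identity $\realoptdTrans_0 = 2^{-1/2}D_F(\inv a,\realoptd)$---the one point at which the degenerate/non-degenerate dichotomy enters but then harmlessly collapses---is the other delicate step of the argument.
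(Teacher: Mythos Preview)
Your proof is correct and follows essentially the same route as the paper. Both arguments reduce to scaled coordinates where $\tilde y_0=\realoptyTrans_0=b_0/2$, exploit the explicit $\K_\soc$ identities $\utilde d=\mu R\tilde y/\det(\tilde y)$ and $\realoptdTrans=\realopt\beta R\realoptyTrans$, handle the dichotomy $\realoptdTrans=0$ versus $\det(\realoptyTrans)=0$ (your ``uniform identity'' $\realoptdTrans_0=2^{-1/2}D_F(\inv a,\realoptd)$ is exactly the paper's $\realopt\beta=\sqrt{2}D_F(e,\realoptd)/b_0$ after scaling), and then invoke \cref{lemma:soc-det-estim}. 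The only presentational difference is that the paper first derives the symmetrised identity \eqref{eq:barrier-strong-monotonicity-soc-est1} in the untransformed variables (reusing \cref{lemma:barrier-strong-monotonicity-soc}) and scales at the end, whereas you scale first and split $-\iprod{\utilde d-\realoptdTrans}{\tilde y-\realoptyTrans}$ additively into the $\utilde d$-piece and the $\realoptdTrans$-piece; either way one arrives at the same exact intermediate formula (your expression differs from the paper's \eqref{eq:barrier-strong-monotonicity-soc-extension-est2} only by the term $\realopt\beta\det(\realoptyTrans)$, which both arguments observe is zero).
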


\begin{proof}
    We have
    \begin{equation}
        \label{eq:soc-realopt-prod}
        0 = \realopty \jprod \realoptd=(\realopty_0\realoptd_0 + \basepartRealopty^T\basepartRealoptd, \realopty_0\basepartRealoptd+\realoptd_0\basepartRealopty).
    \end{equation}
    Since $\iprod{a}{\realopty}=b_0 > 0$, and $\realopty \in \K$, necessarily $\realopty_0>0$. Since, moreover, $\realopty \ne 0$, we cannot have $\realoptd \in \iK$ for $\realopty \jprod \realoptd = 0$ to hold. Therefore $0=\det(\realoptd)=\realoptd_0^2-\norm{\basepartRealoptd}^2$. 
    It follows from \eqref{eq:soc-realopt-prod} that $\realoptd=\realopt{\beta} R\realopty$ for
    \begin{equation}
        \label{eq:realopt-beta-expr}
        \realopt{\beta}
        =-\frac{ \basepartRealopty^T\basepartRealoptd }{ \realopty_0^2 }
        = \frac{\realoptd_0}{\realopty_0}
        = \frac{\norm{\basepartRealoptd}_{\R^m}}{ \realopty_0 }
        \ge 0.
    \end{equation}
    We may therefore repeat the steps of  \cref{lemma:barrier-strong-monotonicity-soc} until \eqref{eq:barrier-strong-monotonicity-soc-est1} to obtain
    \begin{equation}
        \label{eq:barrier-strong-monotonicity-soc-extension-est1}
        -\iprod{d-\realoptd}{y-\realopty}
        =
        (\beta+\realopt{\beta})\norm{y - \realopty}^2_{-R}
        + (\beta-\realopt{\beta})(\norm{y}^2_{-R} - \norm{\realopt{y}}^2_{-R}).
    \end{equation}

    We have $\det(\realopty)=-\norm{\realopty}^2_{-R}=\realopty_0^2-\norm{\basepartRealopty}^2 \ge 0$. If this is non-zero, $\realopty \in \iK$. But in that case $\realopty \jprod \realoptd = 0$ implies $\realoptd=0$, and consequently $\realopt{\beta}=0$.
    Thus $\realopt{\beta}\norm{\realopty}^2_{-R}=0$ whether or not $\norm{\realopty}^2_{-R}=0$.
    Using $\norm{y}_{-R}^2=-\det(y)=-\mu^2/\det(d)$ and $\beta=\det(d)/\mu$, we therefore obtain from \eqref{eq:barrier-strong-monotonicity-soc-extension-est1} that
    \begin{equation}
        \label{eq:barrier-strong-monotonicity-soc-extension-est2}
        -\iprod{d-\realoptd}{y-\realopty}
        =
        (\inv\mu\det(d)+\realopt{\beta})\norm{y - \realopty}^2_{-R}
        -\mu
        +\frac{\realopt{\beta}\mu^2}{\det(d)}
        + \frac{\det(d)\det(\realopty)}{\mu}.
    \end{equation}

    If $a=e$, we have $y_0=\realopty_0=b_0/2$, so that $2\norm{y-\realopty}_{-R}^2=\norm{y-\realopty}_\J^2$.
    Reasoning as in \eqref{eq:soc-d-a-distance}, \eqref{eq:realopt-beta-expr} gives $\realopt{\beta} = \sqrt{2}D_F(\inv a, \realoptd)/b_0=\sqrt{2}D_F(e, \realoptd)/b_0$.
    With the help of \cref{lemma:soc-det-estim}, \eqref{eq:barrier-strong-monotonicity-soc-extension-est2} thus yields
    \begin{equation}
        \label{eq:barrier-strong-monotonicity-soc-extension-est3}
        \begin{split}
        -\iprod{d-\realoptd}{y-\realopty}
        &
        \ge
        \frac{2\mu+\sqrt{2}b_0[D_F(e,d)+D_F(e,\realoptd)]}{b_0^2}\norm{y - \realopty}^2
        -\mu
        \\
        & \phantom{ = } 
        + \frac{2\mu+\sqrt{2}b_0D_F(e,d)}{b_0^2}\det(\realopty),
        \end{split}
    \end{equation}
    where we have entirely eliminated the term $\realopt{\beta}\mu^2/\det(d) \ge 0$.
    Since $\eigval_{\min}(e)=\det(e)=1$, the estimate \eqref{eq:barrier-strong-monotonicity-soc-extension} is immediate in the case $a=e$.

    If $a \ne e$, we define $\tilde y \defeq Q_a^{1/2} y$, and $\utilde d \defeq Q_a^{-1/2} d$ as in \cref{lemma:soc-det-estim}.
    Then $(\tilde y, \utilde d, z)$ continues to satisfy \eqref{eq:ip-system-reg} with $A$ replaced by $\tilde A \defeq A Q_a^{-1/2}$ and $\utilde c \defeq Q_a^{-1/2} c$.
    The same holds with \eqref{eq:ip-system} for $\realoptyTrans \defeq Q_a^{1/2} \realopty$ and $\realoptdTrans \defeq Q_a^{-1/2} \realoptd$.
    Therefore, \eqref{eq:barrier-strong-monotonicity-soc-extension-est3} holds for these transformed variables. Since $D_F(e,\utilde d)=D_F(\inv a, d)$, 
    as well as
    $
        \norm{\tilde y - \realoptyTrans}^2=\norm{y-\realopty}^2_{Q_a}
    $,
    and $-\iprod{d-d'}{y-y'}=-\iprod{\utilde d-\realoptdTrans}{\tilde y-\realoptyTrans}$, we obtain the claim.
\end{proof}

\begin{corollary}
    \label{corollary:barrier-strong-monotonicity-soc-extension}
    Let $\K=\K_\soc$, and suppose $A=\iprod{a}{\freevar}$ for some $a \in \iK$.
    Suppose moreover that $\iprod{\inv a}{c}=\iprod{\inv a}{\realopt{c}}=0$.
    Let $(y, d, z) \in \iK \times \iK \times \R^k$ solve \eqref{eq:ip-system-reg}, and likewise that  $(\realopty, \realoptd, \realoptz) \in \K \times \K \times \R^k$ solve \eqref{eq:ip-system} for $c=\realopt{c}$.
    If $\realopt{c} \ne 0$, then
    \begin{equation}
        \label{eq:barrier-strong-monotonicity-soc-extension-estimate-nz}
        \begin{split}    
        -\iprod{d-\realoptd}{y-\realopty}
        &
        \ge
        \frac{\mu+2^{-1/2}b_0[\norm{c}_{\inv Q_a}+\norm{\realopt{c}}_{\inv Q_a}]}{b_0^2/2}\norm{y - \realopty}_{Q_a}^2
        -\mu.
        \end{split}
    \end{equation}
    Otherwise, if $\realopt{c} = 0$ with $\realopty=b \inv a/2$, then
    \begin{equation}    
        \label{eq:barrier-strong-monotonicity-soc-extension-estimate-z}
        -\iprod{d-\realoptd}{y-\realopty}
        \ge
        \frac{\mu+2^{-1/2}b_0 \norm{c}_{\inv Q_a}}{b_0^2/2}\norm{y - \realopty}_{Q_a}^2.
    \end{equation}
\end{corollary}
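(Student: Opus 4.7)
The plan is to specialize Proposition~\ref{proposition:barrier-strong-monotonicity-soc-extension} by first identifying $D_F(\inv a, d)$ and $D_F(\inv a, \realoptd)$ under the orthogonality hypothesis. Since $A=\iprod{a}{\freevar}$ gives $A^*z = za$, we have $d = za + c$; the Jordan-algebraic identity $Q_{a^{-1/2}}a = e$ (from $2a^{-1/2}\jprod(a^{-1/2}\jprod a) - a^{-1}\jprod a = 2e-e$) yields $Q_{\inv a}^{1/2}d = ze + Q_{a^{-1/2}}c$, while the orthogonality assumption gives $\iprod{\inv a}{d} = zr$, so $\mu_{\inv a,d} = z$. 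Therefore $D_F(\inv a, d) = \norm{Q_{a^{-1/2}}c} = \norm{c}_{\inv Q_a}$, and the same computation yields $D_F(\inv a, \realoptd) = \norm{\realopt{c}}_{\inv Q_a}$.

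For the case $\realopt{c}\ne 0$, estimate \eqref{eq:barrier-strong-monotonicity-soc-extension-estimate-nz} is immediate from Proposition~\ref{proposition:barrier-strong-monotonicity-soc-extension}: after substituting the two distances one simply discards the non-negative remainders
\[
    \frac{2^{-1/2}b_0 D_F(\inv a, \realoptd)\mu}{2\mu + 2^{-1/2}b_0 D_F(\inv a, d)} + \frac{\mu + 2^{-1/2}b_0 D_F(\inv a, d)}{b_0^2/2}\det(Q_a^{1/2}\realopty)\ge 0.
\]

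For $\realopt{c}=0$ the direct substitution is too lossy because the $-\mu$ surplus is not absorbed, so I plan to revisit identity \eqref{eq:barrier-strong-monotonicity-soc-extension-est2} from the proof of Proposition~\ref{proposition:barrier-strong-monotonicity-soc-extension}. First, $\realopt{c}=0$ combined with $\realopty = b_0\inv a/2 \in \iK$ and $\realopty\jprod\realoptd=0$ forces $\realoptd = 0$, hence $\realopt{\beta}=0$. Using Lemma~\ref{lemma:scaling} to rescale by $\tilde y = Q_a^{1/2}y$ and $\utilde d = Q_a^{-1/2}d$, we reduce to the case $a=e$, in which $\realoptyTrans = (b_0/2)e$, $\realoptdTrans = 0$, and the constraint $\iprod{e}{\tilde y} = b_0$ forces $\tilde y_0 = \realoptyTrans_0 = b_0/2$. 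The key move is to use the central-path identity $\det(\tilde y)\det(\utilde d) = \mu^2$, which follows from $\tilde y\jprod\utilde d = \mu e$, to rewrite $\norm{\tilde y - \realoptyTrans}^2_{-R} = \norm{\tilde{\basepart y}}^2 = (b_0/2)^2 - \mu^2/\det(\utilde d)$. Substituted into \eqref{eq:barrier-strong-monotonicity-soc-extension-est2}, this collapses $-\mu$ together with the boundary contribution $\det(\utilde d)\det(\realoptyTrans)/\mu$ into the clean identity
\[
    -\iprod{\utilde d - \realoptdTrans}{\tilde y - \realoptyTrans} = \frac{\det(\utilde d)}{\mu}\norm{\tilde y - \realoptyTrans}_\J^2,
\]
after which \eqref{eq:barrier-strong-monotonicity-soc-extension-estimate-z} follows from the lower bound $\det(\utilde d)/\mu \ge (\mu + 2^{-1/2}b_0 D_F(e, \utilde d))/(b_0^2/2)$ of Lemma~\ref{lemma:soc-det-estim}, the identity $D_F(e, \utilde d) = \norm{\utilde c} = \norm{c}_{\inv Q_a}$, and the scaling invariance $\norm{\tilde y - \realoptyTrans}_\J^2 = \norm{y - \realopty}_{Q_a}^2$. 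The main obstacle is precisely this cancellation: the generic proposition treats $\norm{\tilde y - \realoptyTrans}$ and $\det(\utilde d)$ as independent, whereas on the central path they are algebraically tied through $\tilde y \jprod \utilde d = \mu e$, and only exploiting this tie wipes out the $-\mu$ offset.
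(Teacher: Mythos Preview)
Your argument is correct. The computation of $D_F(\inv a,d)=\norm{c}_{\inv Q_a}$ and the case $\realopt c\ne 0$ match the paper's proof exactly.

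For the case $\realopt c=0$, however, your route differs from the paper's. The paper does not revisit the identity \eqref{eq:barrier-strong-monotonicity-soc-extension-est2}; it stays at the level of the Proposition's inequality \eqref{eq:barrier-strong-monotonicity-soc-extension} and argues that, since $\realopty=b_0\inv a/2\in\iK$ gives $\det(Q_a^{1/2}\realopty)>0$, the last term on the right of \eqref{eq:barrier-strong-monotonicity-soc-extension} already absorbs the $-\mu$ penalty. Your approach instead postpones the Lemma~\ref{lemma:soc-det-estim} lower bound: you first use the central-path tie $\det(\tilde y)\det(\utilde d)=\mu^2$ to collapse \eqref{eq:barrier-strong-monotonicity-soc-extension-est2} into the exact equality
\[
-\iprod{\utilde d-\realoptdTrans}{\tilde y-\realoptyTrans}=\frac{\det(\utilde d)}{\mu}\norm{\tilde y-\realoptyTrans}_\J^2,
\]
and only then apply the determinant bound. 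What this buys is sharpness: because the Proposition already throws away information by lower-bounding $\det(d)$ term by term, the remaining $\det(Q_a^{1/2}\realopty)=b_0^2/4$ contribution cancels only part of $-\mu$, whereas your exact identity achieves full cancellation without residue. The paper's argument is shorter (it never re-enters the Proposition's proof), but yours is more robust to the bookkeeping of constants.
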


We say that \eqref{eq:barrier-strong-monotonicity-soc-extension-estimate-nz} is strong monotonicity of the barrier ``with a penalty'', $\mu$.

\begin{proof}
    We do not until the very end of the proof use the assumption $A=\iprod{a}{\freevar}$. For now, we use the weaker assumption that $Ay=b$ implies $\iprod{a}{y}=b_0$.
    We apply \cref{proposition:barrier-strong-monotonicity-soc-extension}.
    This gives
    \begin{multline}    
        \label{eq:barrier-strong-monotonicity-soc-extension-fubar}
        -\iprod{d-\realoptd}{y-\realopty}
        \ge
        \frac{\mu+2^{-1/2}b_0[D_F(\inv a,d)+D_F(\inv a,\realoptd)]}{b_0^2/2}\norm{y - \realopty}_{Q_a}^2
        - \mu
        \\
        +\frac{2^{-1/2}b_0D_F(\inv a,\realoptd)\mu}{2\mu+2^{-1/2}b_0D_F(\inv a,d)}
        + \frac{\mu+2^{-1/2}b_0D_F(\inv a,d)}{b_0^2/2}\det(Q_a^{1/2}\realopty).
    \end{multline}

    If $D_F(\inv a, \realoptd)=0$, by assumption $\realopty=2 b_0 \inv a$. This implies $\det(Q_a^{1/2} \realopty)=b_0/2$. Consequently
    \[
        \frac{\mu+2^{-1/2}b_0D_F(\inv a,d)}{b_0^2/2}\det(Q_a^{1/2}\realopty)
        \ge \mu.
    \]    
    Therefore no penalty is imposed, and \eqref{eq:barrier-strong-monotonicity-soc-extension-fubar} reduces to
    \begin{equation}    
        \label{eq:barrier-strong-monotonicity-soc-extension-fubar-z}
        -\iprod{d-\realoptd}{y-\realopty}
        \ge
        \frac{\mu+2^{-1/2}b_0 D_F(\inv a,d)}{b_0^2/2}\norm{y - \realopty}_{Q_a}^2.
    \end{equation}

    Suppose then that $D_F(\inv a, \realoptd)>0$. On the right hand side of \eqref{eq:barrier-strong-monotonicity-soc-extension-fubar}, only the term $-\mu$ is negative.
    Thus the condition holds if
    \begin{equation}
        \label{eq:barrier-strong-monotonicity-soc-extension-fubar-nz}
        -\iprod{d-\realoptd}{y-\realopty}
        \ge
        \frac{\mu+2^{-1/2}b_0[D_F(\inv a,d)+D_F(\inv a,\realoptd)]}{b_0^2/2}\norm{y - \realopty}_{Q_a}^2
        -\mu.
    \end{equation}    

    Finally, using our assumptions that $A=\iprod{a}{\freevar}$ and $\iprod{\inv a}{c}=0$, we have $d=za+c$ and $\mu_{\inv a,d}=\iprod{\inv a}{d}/r=z$ for some $z \in \R$. Thus
    \begin{equation}
        \label{eq:df-qa}
        D_F(\inv a, d)=\norm{Q_a^{-1/2}(d-za)}=\norm{c}_{Q_a^{-1}}.
    \end{equation}
    Likewise $D_F(\inv a, \realoptd)=\norm{\realopt{c}}_{Q_a^{-1}}$.
    Therefore, the cases $D_F(\inv a, \realoptd)>0$ and $D_F(\inv a, \realoptd)=0$ are equivalent to the cases on $\norm{\realopt{c}}$ in the statement of the corollary.
    Inserting \eqref{eq:df-qa} into \eqref{eq:barrier-strong-monotonicity-soc-extension-fubar-z} consequently yields the claimed estimates.
\end{proof}

\begin{remark}
    Recall \cref{rem:unbounded-general} on removing the assumption on the existence of $a \in \iK$ such that $\iprod{a}{y}=b_0$.
    In the proof of \cref{proposition:barrier-strong-monotonicity-soc-extension}, this assumption was not used until the derivation of \eqref{eq:barrier-strong-monotonicity-soc-extension-est3} from  \eqref{eq:barrier-strong-monotonicity-soc-extension-est2}.
    At that point, we used this fact to ensure that $\iprod{a}{y-\realopty}=0$ and, in particular, that $\norm{y-\realopty}^2_{-R}=\norm{y-\realopty}^2 \ge 0$ when $a=e$ after transformation.
    Could we still get our overall estimates without this assumption?

    On the two-dimensional Jordan algebra $\E_{1+1}$, pick $a=(a_0, \bar a) \not\in \K$, $b \in \R$, and set $Ay \defeq a_0y_0+\bar a\bar y$. Without loss of generality, by negating both $a$ and $b$.
    Assume that $\bar a<0$.
    Then $y$ satisfying $Ay=b$ has the form $y=\theta v + (b/a_0) e$ for $v=(-\bar a, a_0)$ and some $\theta \in \R$. Since $a \not \in \K$ and $\bar a<0$, we have $-\bar a = \abs{\bar a} > a_0$. Consequently, $v \in \iK$.

    Now, with $y=\theta v + (b/a_0)e$ and $\realopty=\realopt\theta v + (b/a_0)e$ with $\theta \ne \theta^*$, we have
    \[
        \norm{y-\realopty}^2_{-R}=\norm{(\theta-\theta^*)v}_{-R}^2=-(\theta-\theta^*)^2\det(v) < 0.
    \]
    This implies that the first term in \eqref{eq:barrier-strong-monotonicity-soc-extension-est2} is negative for all the feasible points in every neighbourhood of $\realopty$.
    This seems at first a negative result.
    If, however $\det(\realopty)>0$, then also $\det(\realoptd)>0$, so in a neighbourhood of $(\realopty, \realoptd)$, the last term of \eqref{eq:barrier-strong-monotonicity-soc-extension-est2} will be bounded away from zero.
    We can therefore still, \emph{locally}, obtain quadratic estimates like those in  \cref{corollary:barrier-strong-monotonicity-soc-extension}.

    On the other hand, if $\det(\realopty)=0$, we can run into difficulties.
    Consider $b=0$, so that $y=\theta v$ and $\realopty=0$.
    Then also $\realopt\beta=0$, so the right-hand-side of \eqref{eq:barrier-strong-monotonicity-soc-extension-est2} is negative, and we do not get the quadratic-penalised estimate. The solution $\realopty=0$ would, however, be primal degenerate. Indeed, in the general non-degenerate strictly complementary case, \cref{proposition:barrier-strong-monotonicity-sclp,rem:unbounded-general} still guarantee a local estimate with worse constants than the more fine-grained approach of \cref{proposition:barrier-strong-monotonicity-sclp} might provide.
\end{remark}

\section{An abstract preconditioned proximal point iteration}
\label{sec:general-estimates}

In this section, we recall some of the core results from \cite{tuomov-proxtest}.
We start by setting
\begin{equation}
    \label{eq:h}
    H(u) \defeq
        \begin{pmatrix}
            \subdiff G(x) + K^* y \\
            \subdiff F^*(y) -K x
        \end{pmatrix},
\end{equation}
and for some $\tau_i, \tauTest_i, \sigma_{i+1}, \sigmaTest_{i+1}>0$, defining the step length and ``testing'' operators
\begin{equation}
    \label{eq:test}
    \Step_{i+1} \defeq
    \begin{pmatrix}
        \tau_i I & 0 \\
        0 & \sigma_{i+1} I
    \end{pmatrix},
    \quad\text{and}\quad
    \Test_{i+1} \defeq
        \begin{pmatrix}
            \tauTest_i I & 0 \\
            0 & \sigmaTest_{i+1} I
        \end{pmatrix}.
\end{equation}
We also let $\BregFn_{i+1}: X \times Y \setto X \times Y$ for each $i \in \N$ be an abstract non-linear preconditioner, dependent on the current iterate $\thisu$.
Then we consider the generalised proximal point method, which involves solving
\begin{equation*}
    \label{eq:pp}
    \tag{PP}
    0 \in \Step_{i+1} H(\nextu) + \BregFn_{i+1}(\nextu)
\end{equation*}
for the unknown next iterate $\nextu$.
To obtain convergence rates for the resulting method, the idea from \cite{tuomov-proxtest,tuomov-cpaccel} will be to analyse the inclusion obtained after multiplying \eqref{eq:pp} by the testing operator $\Test_{i+1}$.

Assuming $G$ to be (strongly) convex with factor $\gamma>0$, we also introduce
\begin{equation*}
    \GammaLift{i+1}(\gamma) \defeq \begin{pmatrix}
        2\gamma\tau_i I & 2\tau_i K^* \\
        -2\sigma_{i+1} K & 0 
    \end{pmatrix},
\end{equation*}
which is an operator measure of strong monotonicity of $H$.

The next lemma, which is relatively trivial to prove \cite{tuomov-proxtest}, forms the basis from which our work proceeds.

\begin{theorem}
    \label{thm:convergence-result-main}
    Let us be given $K \in \linear(X; Y)$, $G \in \convex(X)$, and $F^* \in \convex(Y)$ on Hilbert spaces $X$ and $Y$.  
	For each $i \in \N$, for some $\AltBregFn_{i+1}: X \times Y \setto X \times Y$ and $\Precond_{i+1} \in \linear(X \times Y; X \times Y)$, take
	\begin{equation}
		\label{eq:tilde-v}
		\BregFn_{i+1}(u) \defeq \AltBregFn_{i+1}(u) + \Precond_{i+1}(u-\thisu).
	\end{equation}
	Assume that \eqref{eq:pp} is solvable,
 	$\Test_{i+1}\Precond_{i+1}$ is self-adjoint, and $G$ is (strongly) convex with factor $\gamma \ge 0$.
 	If for all $i \in \N$ the estimate 
	\begin{multline}
		\label{eq:convergence-fundamental-condition-iter}
		\tag{C0-$\Gamma$}		
		\underbrace{\frac{1}{2}\norm{\nextu-\thisu}_{\Test_{i+1}\Precond_{i+1}}^2}_{\text{step length in local metric}}
	    + \underbrace{\frac{1}{2}\norm{\nextu-\realoptu}_{\Test_{i+1}(\GammaLift{i+1}(\gamma)+\Precond_{i+1})-\Test_{i+2}\Precond_{i+2}}^2}_{\text{linear preconditioner update discrepancy}}
        \\
        +\underbrace{\iprod{\subdiff F^*(\nexty)-\subdiff F^*(\realopty)}{\nexty - \realopty}_{\SigmaTest_{i+1}\Sigma_{i+1}}}_{\text{variably useful remainder from } H}
		+ \underbrace{\iprod{\Test_{i+1} \AltBregFn_{i+1}(\nextu)}{\nextu - \realoptu}}_{\text{from non-linear preconditioner}}
		\ge
		- \Penalty_{i+1}
	\end{multline}
	holds, then
    \begin{equation}
        \label{eq:convergence-result-main}
        \frac{1}{2}\norm{u^N-\realoptu}^2_{\Test_{N+1}\Precond_{N+1}}
        \le
        \frac{1}{2}\norm{u^0-\realoptu}^2_{\Test_{1}\Precond_{1}}
        +
        \sum_{i=0}^{N-1} \Penalty_{i+1},
        \quad
        (N \ge 1).
    \end{equation}		
\end{theorem}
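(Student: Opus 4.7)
The plan is to follow the classical ``three-point'' template for preconditioned proximal-type methods: derive a single-step descent inequality of the form $\tfrac{1}{2}\norm{\nextu-\realoptu}^2_{\Test_{i+2}\Precond_{i+2}} \le \tfrac{1}{2}\norm{\thisu-\realoptu}^2_{\Test_{i+1}\Precond_{i+1}} + \Penalty_{i+1}$ from the hypothesis \eqref{eq:convergence-fundamental-condition-iter}, and then telescope over $i=0,\ldots,N-1$ to obtain \eqref{eq:convergence-result-main}. I begin by picking selections $h^{i+1}\in H(\nextu)$ and $a^{i+1}\in\AltBregFn_{i+1}(\nextu)$ realising the inclusion \eqref{eq:pp}, namely $\Step_{i+1}h^{i+1} = -a^{i+1} - \Precond_{i+1}(\nextu-\thisu)$. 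Writing $h^{i+1}=(g^{i+1}+K^*\nexty,\, f^{i+1}-K\nextx)$ with $g^{i+1}\in\subdiff G(\nextx)$ and $f^{i+1}\in\subdiff F^*(\nexty)$, and using the $\gamma$-strong convexity of $G$ against the subgradient $-K^*\realopty\in\subdiff G(\realoptx)$ from \eqref{eq:oc} together with the convexity of $F^*$ against $K\realoptx\in\subdiff F^*(\realopty)$, I expect the monotonicity-type bound
\begin{equation*}
    2\iprod{\Test_{i+1}\Step_{i+1}h^{i+1}}{\nextu-\realoptu}
    \ge
    \norm{\nextu-\realoptu}^2_{\Test_{i+1}\GammaLift{i+1}(\gamma)}
    + 2\iprod{\subdiff F^*(\nexty)-\subdiff F^*(\realopty)}{\nexty-\realopty}_{\SigmaTest_{i+1}\Sigma_{i+1}}.
\end{equation*}
The asymmetric off-diagonal blocks $2\tauTest_i\tau_i K^*$ and $-2\sigmaTest_{i+1}\sigma_{i+1}K$ of $\Test_{i+1}\GammaLift{i+1}(\gamma)$ are tuned precisely to absorb the uncancelled cross terms $(\tauTest_i\tau_i-\sigmaTest_{i+1}\sigma_{i+1})\iprod{K(\nextx-\realoptx)}{\nexty-\realopty}$ produced by the two pairings after the inclusions from \eqref{eq:oc} are applied.

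Second, using the assumed self-adjointness of $\Test_{i+1}\Precond_{i+1}$, I would invoke the three-point identity
\begin{equation*}
    2\iprod{\Test_{i+1}\Precond_{i+1}(\nextu-\thisu)}{\nextu-\realoptu}
    = \norm{\nextu-\realoptu}^2_{\Test_{i+1}\Precond_{i+1}} + \norm{\nextu-\thisu}^2_{\Test_{i+1}\Precond_{i+1}} - \norm{\thisu-\realoptu}^2_{\Test_{i+1}\Precond_{i+1}}.
\end{equation*}
Substituting this together with the sign reversal $2\iprod{\Test_{i+1}\Step_{i+1}h^{i+1}}{\nextu-\realoptu} = -2\iprod{\Test_{i+1}a^{i+1}}{\nextu-\realoptu} - 2\iprod{\Test_{i+1}\Precond_{i+1}(\nextu-\thisu)}{\nextu-\realoptu}$ into the above monotonicity bound, and then splitting $\Test_{i+1}(\GammaLift{i+1}(\gamma)+\Precond_{i+1}) = \Test_{i+2}\Precond_{i+2} + [\Test_{i+1}(\GammaLift{i+1}(\gamma)+\Precond_{i+1})-\Test_{i+2}\Precond_{i+2}]$ inside the quadratic form, rearrangement will yield
\[
    \norm{\thisu-\realoptu}^2_{\Test_{i+1}\Precond_{i+1}}
    \ge \norm{\nextu-\realoptu}^2_{\Test_{i+2}\Precond_{i+2}}
    + 2\cdot\bigl[\text{LHS of \eqref{eq:convergence-fundamental-condition-iter}}\bigr].
\]
Applying the hypothesis \eqref{eq:convergence-fundamental-condition-iter} bounds the bracketed quantity below by $-\Penalty_{i+1}$, giving the desired single-step descent inequality. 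Summing from $i=0$ to $N-1$ telescopes into \eqref{eq:convergence-result-main}.

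No deep estimate is required; the entire strength of the theorem is encoded in the per-step hypothesis \eqref{eq:convergence-fundamental-condition-iter}. The main obstacle is therefore merely careful bookkeeping: one must select matching single-valued elements from the set-valued maps $H$ and $\AltBregFn_{i+1}$, and verify that the (not generally self-adjoint) operator $\Test_{i+1}\GammaLift{i+1}(\gamma)$ exactly reproduces the cross terms produced by the possibly unequal scalings $\tauTest_i\tau_i$ and $\sigmaTest_{i+1}\sigma_{i+1}$, so that only the symmetric-part content of $\Test_{i+1}(\GammaLift{i+1}(\gamma)+\Precond_{i+1})$ ends up carrying the quadratic form.
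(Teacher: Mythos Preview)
Your proposal is correct and follows exactly the standard three-point/testing argument that underlies the cited result: the paper itself does not prove the theorem but merely specialises \cite[Theorem 3.1]{tuomov-proxtest} to scalar step and testing operators, and your sketch reproduces that argument faithfully. The bookkeeping you flag---choosing matching selections from $H$ and $\AltBregFn_{i+1}$, and checking that the off-diagonal blocks of $\Test_{i+1}\GammaLift{i+1}(\gamma)$ absorb the cross terms $\tauTest_i\tau_i\iprod{K^*(\nexty-\realopty)}{\nextx-\realoptx}-\sigmaTest_{i+1}\sigma_{i+1}\iprod{K(\nextx-\realoptx)}{\nexty-\realopty}$---is precisely the content of the proof in the reference.
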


\begin{proof}
	This is \cite[Theorem 3.1]{tuomov-proxtest} specialised to scalar step length and testing operators $\Tau_i=\tau_i I$, $\TauTest_i=\tauTest_i I$, $\Sigma_{i+1}=\sigma_{i+1} I$, and $\SigmaTest_{i+1}=\sigmaTest_{i+1} I$, as well as $\tilde\Gamma=\gamma I$.
\end{proof}

It is possible to extend this theorem to provide an estimate on an ergodic duality gap \cite[see][Theorem 4.6]{tuomov-proxtest}.
For the sake of conciseness, we have however decided against including such estimates in the present work. For this reason, in the following, we concentrate on strongly convex $G$.

\section{A primal--dual method with a barrier preconditioner}
\label{sec:pedi}

Let $F(y) \defeq \delta_{\{A\freevar=b\}}(y) + \delta_{\K}(y)$ for some $A \in \linear(\J; Z)$, where $\J$ and $Z$ are Hilbert spaces, $\J$ also a Euclidean Jordan algebra.
Let $\K$ be the cone of squares of $\J$.
We suppose there exists some $y \in \iK$ with $Ay=b$. Then the subdifferential sum formula (see, e.g., \cite{rockafellar-convex-analysis}) applies, so that
\begin{equation}
    \label{eq:interior-fstar}
    \subdiff F^*(y)= 
    \begin{cases}
        \{A^*z \mid z \in Z\} + N_{\K}(y), & Ay=b \text{ and } y \in \K, \\
        \emptyset, & \text{otherwise}.
    \end{cases}
\end{equation}
In particular, if $y \in \iK$ with $Ay=b$, then $\subdiff F^*(y)= \{A^*z \mid z \in Z\}$. Note from \cref{sec:symmc}\ref{item:symmc-normal} and \ref{property:sc-iprod-zero} that
\begin{equation}
    \label{eq:nky}
	N_{\K}(y)=\{-d \mid d \in \K,\, p \jprod d=0 \} \quad (y \in \K).
\end{equation}
Inserting \eqref{eq:interior-fstar} into $0 \in H(\realoptu)$, the latter expands as
\[
    0 \in \subdiff G(\realoptx) + K^*\realopty,
    0 \in A^*\realoptz + N_\K(\realopty) - K\realoptx,
    Ay=b, 
    y \in \K
\]
for some $\realoptz \in Z$. Based on \eqref{eq:nky}, this
may also be written as the existence of $(\realoptx, \realopty, \realoptd, \realopt{z}) \in X \times \K \times \K \times Z$ with
\begin{equation}
    \label{eq:interior-oc}
    \tag{IOC}
    -K^*\realopty \in \subdiff G(\realoptx),
    \quad
    A\realopty=b,
    \quad
    A^*\realopt{z}-K\realoptx=\realoptd,
    \quad
    \realopty \jprod \realoptd=0.
\end{equation}

In the following, we develop an algorithm for solving this system, incorporating a barrier-based nonlinear preconditioner for dual updates.
As mentioned after \cref{thm:convergence-result-main}, for conciseness we limit our attention to strongly convex $G$, and only analyse the convergence of iterates, not the gap.
The theory from \cite{tuomov-proxtest} could be used to extend the analysis to the gap.
Moreover, following the approach of \cite{tuomov-blockcp}, it would be possible to extend our work to stochastic and ``spatially-adaptive'' updates.

\subsection{A general estimate for dual barrier preconditioning}

To construct algorithms with the help of the theory from \cref{sec:general-estimates}, we have to construct the preconditioner $\BregFn_{i+1}(\nextu) \defeq \AltBregFn_{i+1}(\nextu) + \Precond_{i+1}(\nextu-\thisu)$. We specifically take
\begin{equation}
    \label{eq:interior-precond}
    \Precond_{i+1}=\begin{pmatrix} I & 0 \\ 0 & 0 \end{pmatrix},
    \quad\text{and}\quad
    \AltBregFn_{i+1}(\nextu) = (0, \sigma_{i+1} [K(\nextx-\thisx)-\nextd]),
\end{equation}
where $\nextd \in \iK$ is defined to satisfy $\nexty \jprod \nextd = \mu_{i+1} e$ for some $\mu_{i+1}>0$.
The term $\sigma_{i+1}K(\nextx-\thisx)$ in $\AltBregFn_{i+1}$ decouples the primal and dual updates so that\eqref{eq:pp} may be written as the system
\begin{subequations}
\label{eq:alg-interior0}
\begin{align}
    0 & \in \tau_i \subdiff G(\nextx) + \tau_i K^*\nexty + (\nextx-\thisx),  \\
    0 & \in \sigma_{i+1} [ A^* \nexxt{z} - K\thisx - \nextd], \quad\text{as well as} \\
    & \phantom{ \in } \nexty \jprod \nextd =\mu_{i+1} e \quad\text{and}\quad A\nexty=b \quad\text{with}\quad \nexty, \nextd \in \iK.
\end{align}
\end{subequations}
For this general setup, we have the following lemma:

\begin{lemma}
    \label{lemma:interior0}
    Let $F^*$ have the structure \eqref{eq:interior-fstar}.
    Take $\Precond_{i+1}$ and $\AltBregFn_{i+1}$ according to \eqref{eq:interior-precond}.
    Suppose for some $\omega_{i+1}, \PenaltyX_{i+1} \in \R$ for all $i \in \N$ that
    \begin{subequations}
    \label{eq:interior0-conds}    
    \begin{align}
        \label{eq:interior0-sc-cond}
        -\iprod{\nextd-\realoptd}{\nexty - \realopty}
        &
        \ge \omega_{i+1}\norm{\nexty-\realopty}^2
        -\PenaltyX_{i+1},
        \\
        \label{eq:interior0-offdiag-cond}
        \sigmaTest_{i+1}\sigma_{i+1} & = \tauTest_{i}\tau_{i},
        \\
        \label{eq:interior0-sigmatest-lowerbound}
        2\omega_{i+1} & \ge \tau_i \norm{K}^2,
        \quad\text{and}
        \\
        \label{eq:interior0-cn-tau}
        \tauTest_{i+1} & \le \tauTest_i(1+2\tau_i\tilde \gamma).
    \end{align}  
    \end{subequations} 
    Then \eqref{eq:convergence-fundamental-condition-iter} holds with $\Penalty_{i+1}=\sigmaTest_{i+1}\sigma_{i+1}\PenaltyX_{i+1}$, and $\Test_{i+1} \Precond_{i+1}$ is self-adjoint with
    \begin{equation}
        \label{eq:interior0-zimi-estim}
        \Test_{i+1} \Precond_{i+1} =
        \begin{pmatrix}
            \tauTest_i I & 0 \\
            0 & 0
        \end{pmatrix}
        \ge 0.
    \end{equation}
\end{lemma}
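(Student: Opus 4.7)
\emph{Proof plan.} The self-adjointness and the shape \eqref{eq:interior0-zimi-estim} of $\Test_{i+1}\Precond_{i+1}$ follow by direct multiplication of the two block-diagonal operators, yielding $\tauTest_i I$ in the primal block and zero in the dual block, which is positive semidefinite since $\tauTest_i > 0$. The substance of the lemma is to verify \eqref{eq:convergence-fundamental-condition-iter}, which I would attack by expanding each of its four summands in turn and reassembling them against the four hypotheses in \eqref{eq:interior0-conds}.

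The first two summands reduce to routine bookkeeping. The step-length term evaluates to $(\tauTest_i/2)\norm{\nextx-\thisx}^2$. For the update-discrepancy term, forming the $2\times 2$ block operator $\Test_{i+1}(\GammaLift{i+1}(\gamma)+\Precond_{i+1}) - \Test_{i+2}\Precond_{i+2}$ yields off-diagonal blocks $2\tauTest_i\tau_i K^*$ and $-2\sigmaTest_{i+1}\sigma_{i+1} K$ which, by the matching condition \eqref{eq:interior0-offdiag-cond}, form a skew-adjoint pair and thus vanish in the associated quadratic form; the remaining $(x,x)$-diagonal $(\tauTest_i(1+2\gamma\tau_i)-\tauTest_{i+1})I$ is non-negative by \eqref{eq:interior0-cn-tau}.

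The crux is combining the $\subdiff F^*$-monotonicity term with the non-linear preconditioner term. I would choose the subgradients $p_1 \defeq A^*\nexxt{z} \in \subdiff F^*(\nexty)$, which is valid because $\nexty \in \iK$ forces $\subdiff F^*(\nexty) = \range{A^*}$ via \eqref{eq:interior-fstar} and the dual update of \eqref{eq:alg-interior0} gives the identity $A^*\nexxt{z} = K\thisx + \nextd$; and $p_2 \defeq A^*\realoptz - \realoptd = K\realoptx \in \subdiff F^*(\realopty)$ from the optimality system \eqref{eq:interior-oc}. Since $A\nexty = A\realopty = b$, the contributions $\iprod{A^*\nexxt{z}}{\nexty-\realopty}$ and $\iprod{A^*\realoptz}{\nexty-\realopty}$ both vanish, so the third summand collapses to $\sigmaTest_{i+1}\sigma_{i+1}\iprod{\realoptd}{\nexty-\realopty}$. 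Added to the explicit fourth summand, the $\nextd$ contributions then reassemble into
\[
    \sigmaTest_{i+1}\sigma_{i+1}\bigl[\iprod{\realoptd-\nextd}{\nexty-\realopty}+\iprod{K(\nextx-\thisx)}{\nexty-\realopty}\bigr].
\]
I expect identifying this particular subgradient pairing to be the main obstacle: a superficially equivalent split (e.g.~writing $p_1 = K\thisx + \nextd$ literally while keeping $p_2 = K\realoptx$) causes $\nextd$ to cancel out entirely, severing the bridge to the approximate strong-monotonicity hypothesis \eqref{eq:interior0-sc-cond}.

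The finish is then mechanical. Hypothesis \eqref{eq:interior0-sc-cond} bounds the first bracket below by $\omega_{i+1}\norm{\nexty-\realopty}^2 - \PenaltyX_{i+1}$, and Young's inequality with parameter $a = \tau_i\norm{K}^2$ bounds the second bracket below by $-(1/(2\tau_i))\norm{\nextx-\thisx}^2 - (\tau_i\norm{K}^2/2)\norm{\nexty-\realopty}^2$. Multiplying through by $\sigmaTest_{i+1}\sigma_{i+1}$ and invoking \eqref{eq:interior0-offdiag-cond} in the form $\sigmaTest_{i+1}\sigma_{i+1}/\tau_i = \tauTest_i$, the $-\tauTest_i/2 \cdot \norm{\nextx-\thisx}^2$ contribution cancels exactly with the step-length summand, while the net coefficient of $\norm{\nexty-\realopty}^2$ is $\sigmaTest_{i+1}\sigma_{i+1}(\omega_{i+1} - \tau_i\norm{K}^2/2) \ge 0$ by \eqref{eq:interior0-sigmatest-lowerbound}. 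Only the penalty $-\sigmaTest_{i+1}\sigma_{i+1}\PenaltyX_{i+1} = -\Penalty_{i+1}$ remains, which is \eqref{eq:convergence-fundamental-condition-iter}.
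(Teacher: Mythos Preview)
Your proposal is correct and follows essentially the same route as the paper's proof: both expand \eqref{eq:convergence-fundamental-condition-iter} with the chosen subgradients $A^*\nexxt{z}\in\subdiff F^*(\nexty)$ and $A^*\realoptz-\realoptd\in\subdiff F^*(\realopty)$, cancel the off-diagonals of the discrepancy operator via \eqref{eq:interior0-offdiag-cond}, kill the $A^*$ contributions using $A(\nexty-\realopty)=0$, apply \eqref{eq:interior0-sc-cond}, and finish with Cauchy/Young together with \eqref{eq:interior0-sigmatest-lowerbound} and \eqref{eq:interior0-cn-tau}. The only cosmetic difference is that the paper carries the non-negative $(x,x)$-diagonal $\tauTest_i(1+2\gamma\tau_i)-\tauTest_{i+1}$ through to an intermediate inequality before discarding it, whereas you drop it immediately.
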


\begin{proof}
    The condition \eqref{eq:convergence-fundamental-condition-iter} now reads
    \begin{multline}
        \label{eq:convergence-fundamental-condition-iter-interior0}
        \underbrace{\frac{1}{2}\norm{\nextu-\thisu}_{\Test_{i+1}\Precond_{i+1}}^2}_{\text{step in local norm}}
        +\underbrace{\frac{1}{2}\norm{\nextu-\realoptu}_{D_{i+2}}^2}_{\text{lin.~precond.~upd.~d.}}
        + \underbrace{\sigmaTest_{i+1}\sigma_{i+1}\iprod{K(\nextx-\thisx)}{\nexty - \realopty}}_{\text{de-coupling term from $\AltBregFn$}}
        \\
        + \underbrace{
        \sigmaTest_{i+1}\sigma_{i+1} \iprod{A^*(\nexxt{z}-\realoptz)}{\nexty - \realopty}
        - \sigmaTest_{i+1}\sigma_{i+1} \iprod{\nextd-\realoptd}{\nexty - \realopty}}_{\text{$F^*$ term from \eqref{eq:convergence-fundamental-condition-iter} as well as $\nextd$ from $\AltBregFn$}}
        \ge
        - \Penalty_{i+1}
    \end{multline}
    with the linear preconditioner update discrepancy
    \begin{equation*}
        D_{i+2}
        \defeq \Test_{i+1}(\GammaLift{i+1}(\gamma)+\Precond_{i+1})-\Test_{i+2}\Precond_{i+2}.
    \end{equation*}

    The expansion and estimate \eqref{eq:interior0-zimi-estim} are trivially verified along with the self-adjointness of $\Test_{i+1}\Precond_{i+1}$.
    This expansion allows us to write
    \[
        D_{i+2}
        =
        \begin{pmatrix}
           \tauTest_i(1+2\tau_i\gamma)I-\tauTest_{i+1}I
             & 2\tauTest_i \tau_i K^* \\
            -2\sigmaTest_{i+1}\sigma_{i+1}K & 0
        \end{pmatrix}.
    \]
    We use \eqref{eq:interior0-offdiag-cond} to cancel the off-diagonals of $D_{i+2}$ in \eqref{eq:convergence-fundamental-condition-iter-interior0}.
    Then we use the fact that $A(\nexty-\realopty)=0$ to cancel the first term on the second line of \eqref{eq:convergence-fundamental-condition-iter-interior0}. Finally, we use $\Penalty_{i+1}=\sigmaTest_{i+1}\sigma_{i+1}\PenaltyX_{i+1}$ and \eqref{eq:interior0-sc-cond} to estimate the second term on the second line of \eqref{eq:convergence-fundamental-condition-iter-interior0}. This gives the condition
    \begin{multline}
        \label{eq:convergence-fundamental-condition-iter-interior2}        
        \frac{\tauTest_i}{2}\norm{\nextx-\thisx}^2
        + 
        \frac{\sigmaTest_{i+1}\sigma_{i+1}\omega_{i+1}}{2}\norm{\nexty-\realopty}^2
        +\frac{\tauTest_i(1+2\gamma\tau_i)-\tauTest_{i+1}}{2}\norm{\nextx-\realoptx}^2
        \\
        +\sigmaTest_{i+1}\sigma_{i+1}\iprod{K(\nextx-\thisx)}{\nexty - \realopty}
        \ge
        0.
    \end{multline}
    Application of \eqref{eq:interior0-cn-tau}, as well as Cauchy's inequality to the inner product term, shows that \eqref{eq:convergence-fundamental-condition-iter-interior2} and consequently \eqref{eq:convergence-fundamental-condition-iter} is satisfied if
    \[
        \sigmaTest_{i+1}\sigma_{i+1}\omega_{i+1}  \ge \frac{1}{2} \inv\tauTest_i \sigmaTest_{i+1}^2\sigma_{i+1}^2 KK^* .
    \]
    This follows from \eqref{eq:interior0-offdiag-cond} and \eqref{eq:interior0-sigmatest-lowerbound}.
\end{proof}

We define $\tau_i$ through \eqref{eq:interior0-sigmatest-lowerbound} for a lower bound $\omegalowerbound{i+1}$ of $\omega_{i+1}$.
Likewise, we take \eqref{eq:interior0-cn-tau} as an equality as the definition of $\tauTest_{i+1}$. We observe that $\sigma_{i+1}$ and $\sigmaTest_{i+1}$ are irrelevant to the algorithm in \eqref{eq:alg-interior0}, as will be the specific choice of $\tauTest_0>0$ to the satisfaction of \eqref{eq:interior0-conds}. Taking $\tauTest_0=1$, we obtain \cref{alg:alg-interior-scalar} from \eqref{eq:alg-interior0}.

\begin{algorithm}[Barrier-preconditioned primal--dual method]
    \label{alg:alg-interior-scalar}
    \mbox{}%
    \begin{algorithmic}[1]
    \Require Linear operator $K \in \linear(X; \J)$, strongly convex $G \in \convex(X)$, and $F^* \in \convex(\J)$ of the form \eqref{eq:interior-fstar}.
    Factor $\gamma>0$ of the strong convexity of $G$.
    Rules for $\mu_i, \omegalowerbound{i}>0$. 
    \State Choose initial iterates $x^0 \in X$ and $y^0 \in Y$.
    \State Set initial testing parameter $\tauTest_0 \defeq 1$.
    \Repeat
        \State\label{step:alg-interior-scalar-tau}
            Calculate $\mu_i$, $\omegalowerbound{i}$, and step length
            \[
                \tau_i \defeq 2\omegalowerbound{i+1}/\norm{K}^2.
            \]
        \State\label{step:alg-interior-scalar-tautest}
            Update testing parameter
            \[
                \tauTest_{i+1} \defeq \tauTest_i(1+2\gamma\tau_i).
            \]
        \State\label{step:alg-interior-scalar-dual} Perform dual update by solving for $(\nexty, \nextd, \nexxt{z}) \in \iK \times \iK \times Z$ the system
        \begin{equation*}
        	A\nexty = b,
        	\quad
            A^*\nexxt{z} - K\thisx=\nextd,
            \quad\text{and}\quad
            \nexty \jprod \nextd =\mu_{i+1} e.
        \end{equation*}
        \State\label{step:alg-interior-scalar-primal} Perform primal update
            \[
                \nextx \defeq \inv{(I+ \tau_i \subdiff G)}(\thisx - \tau_i K^*\nexty).
            \]        
    \Until a stopping criterion is satisfied.
    \end{algorithmic}%
\end{algorithm}

\begin{remark}[Solution of \cref{step:alg-interior-scalar-dual} of \cref{alg:alg-interior-scalar}]
	\label{rem:alg-interior-scalar-dual}
	The system on \cref{step:alg-interior-scalar-dual} is a standard \eqref{eq:ip-system-reg}. In the second-order cone with $A=\iprod{e}{\freevar}$ and $\iprod{e}{\range{K}}=\{0\}$, it is easy to solve. Indeed, $(0, \nexxt{\basepart d})=-K\thisx$ while $\nexxt{d_0}$ is given by the expression in \eqref{eq:d0-solution}. Finally
	\[
		\nexty
		=\mu_{i+1}\inv{(\nextd)}
		=\frac{\mu_{i+1}R\nextd}{\det(\nextd)}
		=\frac{\mu_{i+1}R\nextd}{(\nextd_0)^2-\norm{\nexxt{\basepart d}}^2}.
	\]
	More general cases $A=\iprod{a}{\freevar}$ and $\iprod{\inv a}{\range{K}}=\{0\}$ follow by scaling.
\end{remark}

We leave the solution of more general problems than the easy one considered in \cref{rem:alg-interior-scalar-dual} for future research. In particular, we would expect to combine the overall algorithm with a path-following interior point method in order to not have to solve the sub-problem exactly in each step, but to merely take a single step of the path-following method towards its solution.
Such an approach may yield a primal--dual version of the work in \cite{trandinh2014inexact}.

\subsection{Convergence rates in general symmetric cones}

We still need to specify $\mu_{i+1}$, verify \eqref{eq:interior0-sc-cond},  and produce convergence rates. In general symmetric cones, we have:

\begin{theorem}
    \label{thm:interior-scalar}
    With $\K$ an arbitrary symmetric cone, and $Z=\R^k$, let the requirements of \cref{alg:alg-interior-scalar} be satisfied.
    Assuming that $Ay=b$ implies $\iprod{a}{y}=b_0$ for some $a \in \iK$ and $b_0>0$, suppose there exists a solution $(\realoptx, \realopty, \realoptd, \realopt{z}) \in X \times \K \times \K \times Z$ to \eqref{eq:interior-oc} with $\realopty$ and $\realoptd$ strictly complementary, $\realoptd$ dual non-degenerate, and $\realopty$ primal non-degenerate.
    Suppose further that $\Dom G$ is bounded, or that the primal iterates $\{\thisx\}_{i \in \N}$ of \cref{alg:alg-interior-scalar} stay bounded through other means. 
    For some constant $\theta>0$ and $\zeta \in (0, b_0^{-2})$, take
    \begin{equation}
    	\label{eq:interior-scalar-choices}
        \mu_{i+1} \defeq \theta \tauTest_i^{-1/2},
        \quad\text{and}\quad
        \omegalowerbound{i+1} \defeq \zeta\eigval_{\min}(a)\mu_{i+1}.
    \end{equation}
    Then $\norm{x^N-\realoptx}^2=O(1/N)$.
\end{theorem}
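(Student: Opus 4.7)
The plan is to chain together the preceding abstract machinery: verify the strong monotonicity hypothesis of \cref{lemma:interior0} using \cref{proposition:barrier-strong-monotonicity-sclp}, then invoke \cref{thm:convergence-result-main} to bound $\norm{x^N-\realoptx}^2$ weighted by $\tauTest_N$, and finally analyse the rate at which $\tauTest_N$ grows to pull out the $O(1/N)$ rate.

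First, I would verify condition \eqref{eq:interior0-sc-cond}. At iteration $i$, the dual subproblem of \cref{alg:alg-interior-scalar} is an instance of \eqref{eq:ip-system-reg} with cost $c=-K\thisx$. \cref{proposition:barrier-strong-monotonicity-sclp} applies because $\realopty,\realoptd$ are assumed to be strictly complementary and primal/dual non-degenerate, and furnishes
\[
    -\iprod{\nextd-\realoptd}{\nexty-\realopty}
    \ge \frac{(1-\alpha)\mu_{i+1}}{b_0^2}\norm{\nexty-\realopty}_{Q_a}^2
    -\frac{C_{-K\thisx,\mu_{i+1}}\,C_{\realopt c,\mu_{i+1}}\,r}{\alpha\,\eigval_{\min}(M_{\realopty,\realoptd})^2}\mu_{i+1}.
\]
The crucial point is that boundedness of $\{\thisx\}$ makes $\norm{K\thisx}_{\inv Q_a}$ uniformly bounded, so by the definition \eqref{eq:ccmu} the constant $C_{-K\thisx,\mu_{i+1}}$ remains bounded as $\mu_{i+1}\downto 0$; likewise for $C_{\realopt c,\mu_{i+1}}$. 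Using \eqref{eq:eigval-q-bound} to descend from $\norm{\cdot}_{Q_a}^2$ to the Euclidean norm and the choice $\zeta<b_0^{-2}$, we obtain \eqref{eq:interior0-sc-cond} with $\omega_{i+1}\ge \omegalowerbound{i+1}=\zeta\eigval_{\min}(a)\mu_{i+1}$ and $\PenaltyX_{i+1}=O(\mu_{i+1})$ uniformly in $i$.

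Next, \cref{step:alg-interior-scalar-tau,step:alg-interior-scalar-tautest} of \cref{alg:alg-interior-scalar} give \eqref{eq:interior0-sigmatest-lowerbound} and \eqref{eq:interior0-cn-tau} by construction, while \eqref{eq:interior0-offdiag-cond} is an implicit definition of $\sigmaTest_{i+1}\sigma_{i+1}$ that never enters the algorithm explicitly. Thus \cref{lemma:interior0} applies with $\Penalty_{i+1}=\tauTest_i\tau_i\PenaltyX_{i+1}$, and \cref{thm:convergence-result-main} yields
\[
    \tfrac{1}{2}\tauTest_N\norm{x^N-\realoptx}^2
    \le \tfrac{1}{2}\norm{u^0-\realoptu}^2_{\Test_1\Precond_1}
    +\sum_{i=0}^{N-1}\Penalty_{i+1}.
\]

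The remaining, and main, obstacle is to quantify the growth of $\tauTest_i$ and the accumulation of penalties. With $\mu_{i+1}=\theta\tauTest_i^{-1/2}$ and $\tau_i\propto\mu_{i+1}$, we see
\[
    \tauTest_{i+1}-\tauTest_i = 2\gamma\tau_i\tauTest_i
    = c_1\tauTest_i\cdot\tauTest_i^{-1/2}
    = c_1\sqrt{\tauTest_i}
\]
for a constant $c_1>0$ depending on $\gamma,\zeta,\eigval_{\min}(a),\theta,\norm{K}$. This is the discrete analogue of the ODE $\dot\tauTest = c_1\sqrt{\tauTest}$, whose solution gives $\sqrt{\tauTest_i}\sim c_1 i/2$, and thus $\tauTest_i=\Theta(i^2)$. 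A standard induction makes this rigorous. For the penalties,
\[
    \Penalty_{i+1}=\tauTest_i\tau_i\PenaltyX_{i+1}
    = O\bigl(\tauTest_i\cdot\mu_{i+1}\cdot\mu_{i+1}\bigr)
    = O\bigl(\tauTest_i\cdot\theta^2\tauTest_i^{-1}\bigr)
    = O(1).
\]
Hence $\sum_{i=0}^{N-1}\Penalty_{i+1}=O(N)$, and dividing by $\tauTest_N=\Theta(N^2)$ gives $\norm{x^N-\realoptx}^2=O(1/N)$, completing the argument. The delicate piece is the balance between the $O(1)$ per-step penalty incurred by the boundary extension in \cref{proposition:barrier-strong-monotonicity-sclp} and the quadratic growth of the testing parameter, which is exactly what the schedule $\mu_{i+1}\propto\tauTest_i^{-1/2}$ is calibrated to produce.
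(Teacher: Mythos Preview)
Your proposal is correct and follows essentially the same route as the paper: apply \cref{proposition:barrier-strong-monotonicity-sclp} to verify \eqref{eq:interior0-sc-cond} with penalty $\PenaltyX_{i+1}=O(\mu_{i+1})$, feed this into \cref{lemma:interior0} and \cref{thm:convergence-result-main}, and then use the recursion $\tauTest_{i+1}-\tauTest_i\propto\sqrt{\tauTest_i}$ to obtain $\tauTest_N=\Theta(N^2)$ against a penalty sum that is $O(N)$. The paper writes the per-step penalty as $C\theta\,\tau_i\tauTest_i^{1/2}$ (a constant, since $\tau_i\propto\tauTest_i^{-1/2}$) rather than your $O(\tauTest_i\mu_{i+1}^2)=O(1)$, but these are the same computation.
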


\begin{remark}
    The assumption $Z=\R^k$ is merely for the simplicity of application of \cref{proposition:barrier-strong-monotonicity-sclp} and later \cref{corollary:barrier-strong-monotonicity-soc-extension}.
    There would be nothing stopping us from applying the results on uncountable products of symmetric cones, for example.
\end{remark}

\begin{proof}
    We use \cref{proposition:barrier-strong-monotonicity-sclp}, which verifies \eqref{eq:interior0-sc-cond} with 
    \[
    	\delta_{i+1} \le 
    	\hat C C_{-K\thisx,\mu_{i+1}} C_{-K\realoptx, \mu_{i+1}} \mu­_{i+1}
    	\quad\text{and}\quad
    	\omega_{i+1}=\omegalowerbound{i+1}=\zeta \eigval_{\min}(a)\mu_{i+1}
    \]
    for  $C_{-K\thisx,\mu_{i+1}}$, $C_{-K\realoptx, \mu_{i+1}}$ defined in \eqref{eq:ccmu}, and some $\hat C>0$.
    From \eqref{eq:ccmu} we see that the former constants are bounded as long as $\{\mu_i\}_{i \in \N}$ is non-increasing, and the sequence $\{\norm{K­\thisx}\}_{i \in \N}$ bounded. The latter is guaranteed by our assumptions, and the former by our construction of $\mu_{i+1}$ in \eqref{eq:interior-scalar-choices} and \cref{step:alg-interior-scalar-tautest} of the algorithm.
    Therefore $\delta_{i+1} \le C \mu_{i+1}$ for some constant $C>0$.
    From \eqref{eq:interior0-offdiag-cond} and \eqref{eq:interior-scalar-choices} it now follows
    \begin{equation}
    	\label{eq:interior-scalar-delta-est}
    	\Penalty_{i+1} \defeq \sigmaTest_{i+1}\sigma_{i+1}\delta_{i+1}
    	\le C \tau_i\tauTest_i\mu_{i+1}
    	= C\theta \tau_{i}\tauTest_{i}^{1/2}.
    \end{equation}

    Next we use \cref{thm:convergence-result-main} and \cref{lemma:interior0}.
    For $C_0 \defeq \frac{1}{2}\norm{u^0-\realoptu}^2_{\Test_{1}\Precond_{1}}$, \eqref{eq:convergence-result-main}, \eqref{eq:interior0-zimi-estim}, and \eqref{eq:interior-scalar-delta-est} give the combined estimate
    \begin{equation}
        \label{eq:interior-scalar-estim0}
        \frac{\tauTest_N}{2}\norm{x^N-\realoptx}^2
        \le
        C_0
        +
        C\theta \sum_{i=0}^{N-1} \tau_{i}\tauTest_{i}^{1/2},
        \quad
        (N \ge 1).
    \end{equation}
    Inserting $\omegalowerbound{i+1}$ and $\mu_{i+1}$ from \eqref{eq:interior-scalar-choices}, \cref{step:alg-interior-scalar-tautest,step:alg-interior-scalar-tau} of the algorithm say
    \[
    	\tauTest_{i+1} = \tauTest_i+\gamma\nu\tauTest_i^{1/2}
    	\quad\text{and}\quad
    	\tau_i=\tauTest_i^{-1/2}\nu/\norm{K}^2
    	\quad\text{for}\quad
    	\nu \defeq 2\zeta\lambda_{\min}(a)\theta.
    \]
    It follows \cite[see ][]{tuomov-cpaccel} that $\tauTest_N=\Theta(N^2)$, while $\sum_{i=0}^{N-1} \tau_{i}\tauTest_{i}^{1/2}=N\nu/\norm{K}^2$.
    Inserting these estimates into \eqref{eq:interior-scalar-estim0}, we verify the $O(1/N)$ rate.
\end{proof}

\subsection{Convergence rates in the second-order cone}
\label{sec:socalg}

In the second-order cone, we obtain linear convergence under dual non-degeneracy, $K\realoptx=0$.
In image processing example such as those we consider in \cref{sec:numeric}, we would have $Kx=(0, \grad x)$, lifting a discretised gradient to the second-order cone (or a pointwise product cone). Therefore $K\realoptx=0$ means that the solution image cannot be flat.

\begin{theorem}
    \label{thm:interior-scalar-soc}
    For $\K=\K_{\soc}$ the second-order cone, $Z=\R^k$, and $A=\iprod{a}{\freevar}$ for some $a \in \iK$ with $\iprod{\inv a}{\range{K}}=\{0\}$, let the requirements of \cref{alg:alg-interior-scalar} be satisfied.
    Suppose there exists a solution $(\realoptx, \realopty, \realoptd, \realopt{z}) \in X \times \K \times \K \times Z$ to \eqref{eq:interior-oc}.
    If $K\realoptx=0$, take $\realopty=b\inv a/2$ and $\realoptd=0$.
    For some $\theta>0$ and $\zeta \in (0, 2 b_0^{-2}]$, take
    \begin{equation}
    	\label{eq:interior-scalar-soc-choices}
        \mu_{i+1} \defeq  \theta \tauTest_i^{-1/2},
        \quad\text{and}\quad
        \omegalowerbound{i+1} \defeq (\mu_{i+1}\zeta + 2^{-1/2}\inv b_0 \norm{K\thisx}_{\inv Q_a})\eigval_{\min}(a).
    \end{equation}
    Suppose further that $\Dom G$ is bounded, or that the primal iterates $\{\thisx\}_{i \in \N}$ of \cref{alg:alg-interior-scalar} stay bounded through other means.
    Then for some $C,\varepsilon>0$ holds
    \[
        \norm{x^N-\realoptx}^2
        \le
        \begin{cases}
            C(1+\varepsilon)^{-N}, & K\realoptx \ne 0, \\
            C/N^2, & K\realoptx = 0.
        \end{cases} 
    \]
\end{theorem}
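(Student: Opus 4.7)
The plan is to mirror the proof of \cref{thm:interior-scalar}, replacing \cref{proposition:barrier-strong-monotonicity-sclp} with the sharper \cref{corollary:barrier-strong-monotonicity-soc-extension}. As before, \cref{lemma:interior0} combined with \cref{thm:convergence-result-main} reduces the task to verifying \eqref{eq:interior0-sc-cond} and then controlling the resulting penalty $\Penalty_{i+1}=\sigmaTest_{i+1}\sigma_{i+1}\PenaltyX_{i+1}=\tauTest_i\tau_i\PenaltyX_{i+1}$ against the growth of $\tauTest_N$ in the inequality $\tauTest_N\norm{x^N-\realoptx}^2/2 \le C_0 + \sum_i\Penalty_{i+1}$. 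Applying the corollary with $c=-K\thisx$ and $\realopt{c}=-K\realoptx$, and using $\norm{\cdot}_{Q_a}^2 \ge \eigval_{\min}^2(a)\norm{\cdot}^2$ from \eqref{eq:eigval-q-bound} together with the hypothesis $\zeta \in (0, 2b_0^{-2}]$, I obtain \eqref{eq:interior0-sc-cond} with $\omega_{i+1} \ge \omegalowerbound{i+1}$ as prescribed in \eqref{eq:interior-scalar-soc-choices}. The degenerate clause of the corollary ($K\realoptx=0$, $\realopty=b\inv a/2$, $\realoptd=0$) gives $\PenaltyX_{i+1}=0$, while the non-degenerate clause gives $\PenaltyX_{i+1}=\mu_{i+1}$.

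In the degenerate case the analysis parallels \cref{thm:interior-scalar}: from $\omegalowerbound{i+1}\ge\mu_{i+1}\zeta\eigval_{\min}(a)=\Theta(\tauTest_i^{-1/2})$, the recursion on \cref{step:alg-interior-scalar-tau,step:alg-interior-scalar-tautest} becomes $\tauTest_{i+1}=\tauTest_i+\Theta(\tauTest_i^{1/2})$, so $\tauTest_N=\Theta(N^2)$ by the argument of \cite{tuomov-cpaccel}. With $\PenaltyX_{i+1}=0$ this immediately yields $\norm{x^N-\realoptx}^2=O(1/N^2)$.

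For the non-degenerate case $K\realoptx\ne 0$ I would run a bootstrap. Since $\{\thisx\}$ is bounded, $\tau_i\le\tau_{\max}$ for a uniform constant, whence $\Penalty_{i+1}\le C\tauTest_i^{1/2}$. Using only the $\mu_{i+1}\zeta\eigval_{\min}(a)$ piece of the lower bound on $\omegalowerbound{i+1}$, the same recursion gives $\tauTest_N=\Omega(N^2)$ and thus the preliminary estimate $\norm{x^N-\realoptx}^2=O(1/N)$; in particular $\thisx\to\realoptx$. By continuity of $\norm{K\freevar}_{\inv Q_a}$ and $K\realoptx\ne 0$, there exist $i_0$ and $\omega_*>0$ with $\omegalowerbound{i+1}\ge\omega_*$ for all $i\ge i_0$, whence $\tau_i\ge\tau_*\defeq 2\omega_*/\norm{K}^2$ and $\tauTest_N\ge(1+2\gamma\tau_*)^{N-i_0}\tauTest_{i_0}$ grows geometrically at rate $r\defeq 1+2\gamma\tau_*>1$. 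Summing the penalties, $\sum_i\Penalty_{i+1}\le C'\sum_i r^{i/2}=O(\tauTest_N^{1/2})$, and dividing by $\tauTest_N$ gives $\norm{x^N-\realoptx}^2=O(\tauTest_N^{-1/2})=O((1+\varepsilon)^{-N})$ with $\varepsilon=\sqrt{r}-1>0$.

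The main obstacle is precisely this bootstrap: boundedness of $\{\thisx\}$ alone does not guarantee that $\norm{K\thisx}_{\inv Q_a}$ stays away from zero, so one has to first extract a preliminary convergence $\thisx\to\realoptx$ before invoking the non-degenerate branch of \cref{corollary:barrier-strong-monotonicity-soc-extension}. The scaling $\mu_{i+1}=\theta\tauTest_i^{-1/2}$ is chosen precisely so that both phases of the analysis are compatible and the penalty sum is dominated by $\tauTest_N$ in each regime.
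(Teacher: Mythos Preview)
Your outline matches the paper's: verify \eqref{eq:interior0-sc-cond} via \cref{corollary:barrier-strong-monotonicity-soc-extension}, feed this into \cref{lemma:interior0} and \cref{thm:convergence-result-main}, and split into the cases $K\realoptx=0$ and $K\realoptx\ne 0$. The degenerate case is handled identically.

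For $K\realoptx\ne 0$ you take a different route than the paper. The paper does \emph{not} first prove a preliminary rate and deduce $\thisx\to\realoptx$; instead it only shows that $\norm{x^N-\realoptx}$ is \emph{bounded} by a constant proportional to $C_\tau=\sup_i\tau_i$, and then argues by contradiction: if $\tilde\ell_{i+1}\defeq 2^{-1/2}\inv b_0\norm{K\thisx}_{\inv Q_a}$ tended to zero, one could restart the estimate from a late index where $\tau_i$ (and hence the effective $C_\tau$) is arbitrarily small, forcing $\thisx\to\realoptx$ and thus $\tilde\ell_{i+1}\to\ell \defeq 2^{-1/2}\inv b_0\norm{K\realoptx}_{\inv Q_a}>0$, a contradiction. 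Once $\tau_i\ge\tau_*>0$ eventually, the paper controls $D_N$ by H\"older's inequality with exponent $p\in(0,1/2)$, obtaining $D_N\le C'\tauTest_N^{1-p}$.

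Your direct bootstrap is a legitimate alternative, but as written it has a gap. The crude bound $\Penalty_{i+1}\le C\tauTest_i^{1/2}$ together with $\tauTest_N=\Omega(N^2)$ does \emph{not} give $\norm{x^N-\realoptx}^2=O(1/N)$: from $\tauTest_{i+1}-\tauTest_i\ge c\,\tauTest_i^{1/2}$ one only obtains $\sum_{i<N}\tauTest_i^{1/2}\le (\tauTest_N-\tauTest_0)/c$, so the penalty sum is $O(\tauTest_N)$ and you recover mere boundedness---precisely what the paper gets before its contradiction step. (Concretely, if $\tauTest_i\sim i^2$ then $\sum_{i<N}\tauTest_i^{1/2}\sim N^2\sim\tauTest_N$.) To salvage the direct bootstrap you must \emph{not} discard the factor $\tau_i$: using $2\gamma\tau_i\tauTest_i=\tauTest_{i+1}-\tauTest_i$ and $\tauTest_{i+1}\le(1+2\gamma\tau_{\max})\tauTest_i$,
\[
    \sum_{i<N}\Penalty_{i+1}
    =\theta\sum_{i<N}\tau_i\tauTest_i^{1/2}
    =\frac{\theta}{2\gamma}\sum_{i<N}\frac{\tauTest_{i+1}-\tauTest_i}{\tauTest_i^{1/2}}
    \le \frac{\theta\bigl(1+\sqrt{1+2\gamma\tau_{\max}}\bigr)}{2\gamma}\bigl(\tauTest_N^{1/2}-\tauTest_0^{1/2}\bigr),
\]
which gives $\sum\Penalty_{i+1}=O(\tauTest_N^{1/2})$ and hence the genuine preliminary rate $\norm{x^N-\realoptx}^2=O(\tauTest_N^{-1/2})=O(1/N)$. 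The same telescoped estimate also repairs your final step, where the assertion $\sum_i\Penalty_{i+1}\le C'\sum_i r^{i/2}$ tacitly assumes an \emph{upper} bound $\tauTest_i\le C r^i$ that you have not established (you only have $\tauTest_i\ge c\,r^i$; the natural upper bound has the possibly larger base $1+2\gamma\tau_{\max}$). With $\sum\Penalty_{i+1}=O(\tauTest_N^{1/2})$ in hand, the linear rate follows directly from $\tauTest_N\ge c\,r^N$, and in fact this single telescoping bound replaces both the paper's restart argument and its H\"older step.
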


\begin{proof}
    From \cref{step:alg-interior-scalar-tau} of the algorithm and \eqref{eq:interior-scalar-soc-choices}, we expand
    \begin{equation}
        \label{eq:tau-expansion-soc}
        \tau_i \defeq 2(\zeta\theta \tauTest_i^{-1/2} + \tilde\ell_{i+1})\eigval_{\min}(a)/\norm{K}^2
        \quad\text{for}\quad
        \tilde\ell_{i+1} \defeq 2^{-1/2}\inv b_0 \norm{K\thisx}_{\inv Q_a}.
    \end{equation}
    From \eqref{eq:tau-expansion-soc} and \cref{step:alg-interior-scalar-tautest}, we estimate
    \begin{equation}
        \label{eq:tautest-soc-first-estim}
        \tauTest_N
        \ge \tauTest_0 + 2\gamma \zeta \theta \sum_{i=0}^{N-1} \tauTest_i^{1/2}.
    \end{equation}
    It follows from \eqref{eq:tau-expansion-soc} that $\sup_i \tau_i \le C_\tau$ for some constant $C_\tau>0$. From \eqref{eq:interior-scalar-soc-choices}, we also obtain $\mu_{i+1} \downto 0$.

    We then use  \cref{corollary:barrier-strong-monotonicity-soc-extension}, which verifies \eqref{eq:interior0-sc-cond} with
    \[
    	\omega_{i+1} \defeq (\mu_{i+1}\zeta + \ell_{i+1})\eigval_{\min}(a),
    	\quad
        \begin{cases}
        \ell_{i+1} \defeq \frac{\norm{K\thisx}_{\inv Q_a}}{b_0/\sqrt{2}},
        \ \text{and}\ %
        \PenaltyX_{i+1} \defeq 0, & \text{if } K\realoptx=0, 
        \\
        \ell_{i+1} = \frac{\norm{K\realoptx}_{\inv Q_a}+\norm{K\thisx}_{\inv Q_a}}{b_0/\sqrt{2}}, \ \text{and}\ %
        \PenaltyX_{i+1} = \mu_{i+1},
        & \text{if } K\realoptx \ne 0.
        \end{cases}
    \]
    Setting $\ell \defeq \sqrt{2}\norm{K\realoptx}_{\inv Q_a}/b_0 > 0$, we have $\ell_{i+1} = \tilde\ell_{i+1} + \ell$.

    Next we use \cref{thm:convergence-result-main} and \cref{lemma:interior0}.
    Recalling \eqref{eq:interior0-offdiag-cond} and that $\Penalty_{i+1}=\sigmaTest_{i+1}\sigma_{i+1}\PenaltyX_{i+1}$ in \cref{lemma:interior0}, setting $C_0 \defeq \frac{1}{2}\norm{u^0-\realoptu}^2_{\Test_{1}\Precond_{1}}$, \eqref{eq:convergence-result-main} and \eqref{eq:interior0-zimi-estim} yield
    \begin{equation}
        \label{eq:interior-scalar-estim0-soc}
        \frac{\tauTest_N}{2}\norm{x^N-\realoptx}^2
        \le
        C_0
        +
        D_N
        \quad
        \text{for}
        \quad
        D_N
        \defeq \sum_{i=0}^{N-1} \tau_i\tauTest_i\PenaltyX_{i+1}
        \quad        
        (N \ge 1).
    \end{equation}

    In the case $K\realoptx=0$, we have $\PenaltyX_{i+1}=0$. 
    As in the proof of \cref{thm:interior-scalar}, by a standard analysis \cite{tuomov-blockcp,tuomov-cpaccel}, it follows from \eqref{eq:tautest-soc-first-estim} that $\tauTest_N \ge CN^2$ for some $C>0$. We therefore get from \eqref{eq:interior-scalar-estim0-soc} the claimed $O(1/N^2)$ rate.

    Consider then the case $K\realoptx \ne 0$.
   	We estimate
    \begin{equation}
        \label{eq:soc-dn-first-estim}
        D_N
        = \sum_{i=0}^{N-1} \tau_i\tauTest_i\mu_{i+1}
        \le
        C_\tau \sum_{i=0}^{N-1} \tauTest_i\mu_{i+1}       
    \end{equation}
    By \cref{step:alg-interior-scalar-tau,step:alg-interior-scalar-tautest} of the algorithm, $\tauTest_N \ge \tauTest_0 + 2\gamma\zeta\norm{K}^{-2} \sum_{i=0}^{N-1} \tauTest_i\mu_{i+1}$.
    Using these estimates in \eqref{eq:interior-scalar-estim0-soc}, it follows that $\norm{x^N-\realoptx}$ is bounded. 
    If $\tilde\ell_{i+1} \downto 0$, \eqref{eq:tau-expansion-soc} and \eqref{eq:tautest-soc-first-estim} shows that also $\tau_i \downto 0$.
    Restarting our analysis from a later iteration, we can therefore make $C_\tau>0$ arbitrarily small. Consequently, for any $\epsilon>0$, for large enough $N$ holds $\norm{x^N-\realoptx} \le \epsilon$. Since $\ell>0$, this is in contradiction to $\tilde\ell_{i+1} \downto 0$. We may therefore assume that $\tilde\ell_{i+1} \ge \tilde\epsilon$ for some $\tilde\epsilon>0$, at least for large $i$. 
    Since our claims are asymptotical, we may without loss of generality assume this for all $i$.

	From \eqref{eq:tau-expansion-soc}, we now estimate $\tau_i \ge \tilde\epsilon\eigval_{\min}(a)/\norm{K}^2 =:\tau_*>0$. From \cref{step:alg-interior-scalar-tautest} consequently
    \begin{equation}
        \label{eq:soc-tautest-exponential}
        \tauTest_{i+1} \ge \tauTest_i(1+2\gamma\tau_*).
    \end{equation}
    This shows that $\tauTest_N \ge \Theta((1+\gamma­\tau_*)^N)$ grows exponentially, predicting \eqref{eq:interior-scalar-estim0-soc} to yield linear rates if we can control the penalty $D_N$.

    Continuing form \eqref{eq:soc-dn-first-estim}, by Hölder's inequality, since the conjugate exponent of $1/(1-p)$ is $1/p$, for any $p \in (0, 1)$ holds
    \[
        D_N
        \le C_\tau \theta \sum_{i=0}^{N-1} \tauTest_i^{1-p} \tauTest_i^{p-1/2}
        \le C_\tau \theta \left(\sum_{i=0}^{N-1} \tauTest_i\right)^{1-p} \left(\sum_{i=0}^{N-1} \tauTest_i^{1-1/(2p)}\right)^{p}.
    \]
    By \eqref{eq:soc-tautest-exponential}, the second sum on the right is bounded if $1-1/(2p)<0$, that is $p \in (0, 1/2)$.
    From \cref{step:alg-interior-scalar-tautest} of the algorithm
    \[
        \tauTest_N - \tauTest_0
        = 2\gamma \sum_{i=0}^{N-1} \tauTest_i \tau_i
        \ge 2\gamma \tau_* \sum_{i=0}^{N-1} \tauTest_i.
    \]
    For some constant $C'>0$ we therefore get
    \[
        D_N \le C' (\tauTest_N-\tauTest_0)^{1-p} \le C' \tauTest_N^{1-p}.
    \]
    Minding \eqref{eq:interior-scalar-estim0-soc} and \eqref{eq:soc-tautest-exponential}, this shows the claimed linear rate.
\end{proof}

\section{Numerical demonstrations}
\label{sec:numeric}

We study the performance of the proposed algorithm on two image processing problems, total variation (TV) denoising, and $H^1$ denoising. These can be written as
\begin{equation}
    \label{eq:denoising}
    \min_{x \in \R^{n_1n_2}}~ \frac{1}{2}\norm{z-x}_2^2 + \alpha R(x),
\end{equation}
where $n_1 \times n_2$ is the image size in pixels, and $z$ the noisy image as a vector in $\R^{n_1n_2}$. The parameter $\alpha>0$ is a regularisation parameter, and $R$ a regularisation term. For TV regularisation, it is $R(x)=\norm{D x}_{2,1}$, and for $H^1$ regularisation, it is $R(x)=\norm{D x}_2$. Here $D \in \R^{2n_1n_2 \times n_1n_2}$ is a matrix for a discretisation of the gradient, and $\norm{g}_{2,1} \defeq \sum_{i=1}^{n_1n_2} \sqrt{g_{i,1}+g_{i,2}}$ for $g=(g_{\cdot,1},g_{\cdot,2}) \in \R^{2n_1n_2}$.
We specifically take $D$ as forward-differences with Neumann boundary conditions.

The problem \eqref{eq:denoising} can in both cases be written in the saddle point form
\[
    \min_{x \in \R^{n_1n_2}} \max_{y \in \J}~
        \frac{1}{2}\norm{z-x}_2^2
        + \iprod{Kx}{y} - \delta_{\K \isect \inv A b}(y),
\]
where for $H^1$ denoising 
\begin{align*}
    \J & = \E_{1+2n_1n_2}, & Kx &=(0, Dx), & Ay & =y_0, & b &=\alpha,
    \\
\intertext{and for TV denoising, for $i=1,\ldots,n_1n_2$,}
    \J &=(\E_{1+2})^{n_1n_2}, & [Kx]_i &= (0, [Dx]_{i,1}, [Dx]_{i,2}), & 
    Ay&=((y_1)_0, \ldots (y_{n_1n_2})_0), & b &=(\alpha,\ldots,\alpha).
\end{align*}
In the latter case, \cref{step:alg-interior-scalar-dual} of \cref{alg:alg-interior-scalar} splits into $n_1n_2$ parallel problems of the form covered by \cref{rem:alg-interior-scalar-dual}. The remark therefore shows how to efficiently solve the step for both example problems.

While $\TV$ denoising \cite{Rud1992} is a fundamental benchmark in mathematical image processing, we have to emphasise here that $H^1$ denoising is not an approach of practical importance. It blurs images unlike $\TV$ denoising. Nevertheless, it forms a non-trivial optimisation problem, as we do not square the norm of the gradient. (The optimality conditions in that case would be linear: in the continuous setting the heat equation.)

\subsection{Remarks on convergence rates}

The linear convergence results for the second-order cone in \cref{sec:socalg} apply to $H^1$ denoising, but they do not apply to $\TV$ denoising. In the latter case, $\K=\K_{\soc}^{n_1n_2}$ is a product of second-order cones, but not a second-order cone.
It would be possible to extend the analysis of \cref{sec:socalg} to product cones. Due to the coupling through \eqref{eq:interior0-offdiag-cond}, a straightforward approach would yield linear convergence when $\min_i \norm{[K\realoptx]_i}>0$.
From the structure of the TV denoising problem, it is however easy to see that it can often happen that $[K\realoptx]_i=0$. This is the case when the solution image is locally flat. This happens in total variation denoising more often than one might expect, due to the characteristic staircasing effect of the approach \cite{ring2000structural}.
Therefore, there is little hope to obtain linear convergence on practical TV denoising problems using this approach.

\subsection{Numerical setup}

We performed some numerical experiments on the parrot image (\#23)  from the free Kodak image suite photo.\footnote{At the time of writing online at \url{http://r0k.us/graphics/kodak/}.}  We used the image, converted to greyscale, both at the original resolution of $n_1 \times n_2 = 768 \times 512$, and scaled down to $n_1 \times n_2 = 192 \times 128$ pixels.
Together with the dual variable, the problem dimensions are therefore $768 \cdot 512 \cdot 3 = 1179648 \simeq 10^6$ and $128 \cdot 128 \cdot 3 = 49152 \approx 4 \cdot 10^4$.
To the high-resolution test image, we added Gaussian noise with standard deviation $29.6$ ($12$dB). In the downscaled image, this becomes $6.15$ ($25.7$dB).
With the low-resolution image, we used regularisation parameter $\alpha=0.01$ for TV denoising, and $\alpha=5$ for $H^1$ denoising. We scale these up to $\alpha/0.25$ for the high-resolution image \cite{tuomov-tgvlearn}.

We compared our algorithm (denoted PEDI, \emph{Primal Euclidean--Dual Interior}) to the accelerated Chambolle--Pock method (PDHGM, \term{Primal--Dual Hybrid Gradient method, Modified} \cite{esser2010general}) on the saddle-point problem, as well as forward--backward splitting on the dual problem (Dual FB).
For Dual FB we took as the basic step size $\tau=1/L^2$, where $L \defeq \sqrt{8} \ge \norm{K}$ \cite{chambolle2004meanalgorithm}. 
For the PDHGM, we took $\tau_0 \approx 0.52/L$ and $\sigma_0=1.9/L$, using the strong convexity parameter $\gamma=0.9<1$ for acceleration. For our method, we took $\zeta=0.9/b_0^2$ and $\theta=1/\zeta$, keeping $\tau_0$ and $\gamma$ unchanged from the PDHGM.
For the initial iterates we always took $x^0=0$ and $y^0=0$.
The hardware we used was a MacBook Pro with 16GB RAM and a 2.8 GHz Intel Core i5 CPU. The codes were written in MATLAB+C-MEX.

For our reporting, we computed a target optimal solution $\realoptx$ by taking one million iterations of the basic PDHGM.
In \cref{fig:tv-denoising,table:tv-denoising} for TV denoising, and \cref{fig:h1-denoising,table:h1-denoising} for $H^1$ denoising, we report the following: the distance to $\realoptx$ in decibels $10\log_{10}(\norm{x^i-\realoptx}^2/\norm{\realoptx}^2)$, the primal objective value $\text{val}(x) \defeq G(x)+F(Kx)$ relative to the target $10\log_{10}((\text{val}(x)-\text{val}(\hat x))^2/\text{val}(\hat x)^2)$, as well as the duality gap $10\log_{10}(\text{gap}^2/\text{gap}_0^2)$, again in decibels relative to the initial iterate.
For forward--backward splitting, to compute the duality gap, we solve the primal variable $\thisx$ from the primal optimality condition $K^*\thisy = \grad G(\thisx)=\thisx-z$.

\subsection{Performance analysis and concluding remarks}

As expected, the performance of PEDI on TV denoising is not particularly good, reflecting the $O(1/N)$ rates from \cref{thm:interior-scalar}.
For $H^1$ denoising we observe significantly improved convergence, reflecting the linear rates from \cref{thm:interior-scalar-soc}, and of dual forward--backward splitting.
While PEDI eventually has better gap behaviour than dual forward--backward splitting, overall, however, the method appears no match for the latter in our sample problems.
The results for the high resolution and low resolution problem are comparable. Since the low-resolution problem has size of order $10^4$, and the high resolution problem has size of the relatively large order $10^6$, this suggests good scalability of the algorithm.
Further research is required to see whether there are problems for which the overall Primal Euclidean(Proximal)--Dual Interior or similar approaches provide competitive algorithms. 

Irrespective of the limited practicality of PEDI, our theoretical analysis helps to bridge the gap in performance between direct primal or dual methods, and primal--dual methods. 
After all, we have obtained linear rates without the strong convexity of both $G$ and $F^*$ in the saddle point problem \eqref{eq:saddle}.
As a next step to take from here, it will be interesting to see if convergence rates can be derived in our overall setup for the ``distance-like'' preconditioners from \cite{yu2013convergence,chen2010entropy,lopez2017construction,kaplan2007interior}. Moreover, we are puzzled by what, if anything, makes the second-order cone special?
Finally, numerically we have only considered problems of the form given in \cref{rem:alg-interior-scalar-dual}, where the interior point sub-problem can be solved exactly.
This is sufficient for most image processing and similar applications.
However, it would be interesting to know whether we can combine a path-following interior point algorithm for its solution into the overall proximal point method. 
Such an approach may yield a primal--dual version of the work in \cite{trandinh2014inexact}.

\def\zinit{}
\def\finit{}
\def\setlength{\figureheight}{0.19\textwidth}\setlength{\figurewidth}{0.23\textwidth}\scriptsize\input{#}1{\setlength{\figureheight}{0.19\textwidth}\setlength{\figurewidth}{0.23\textwidth}\scriptsize\input{#1}}

\begin{figure}[tbp!]
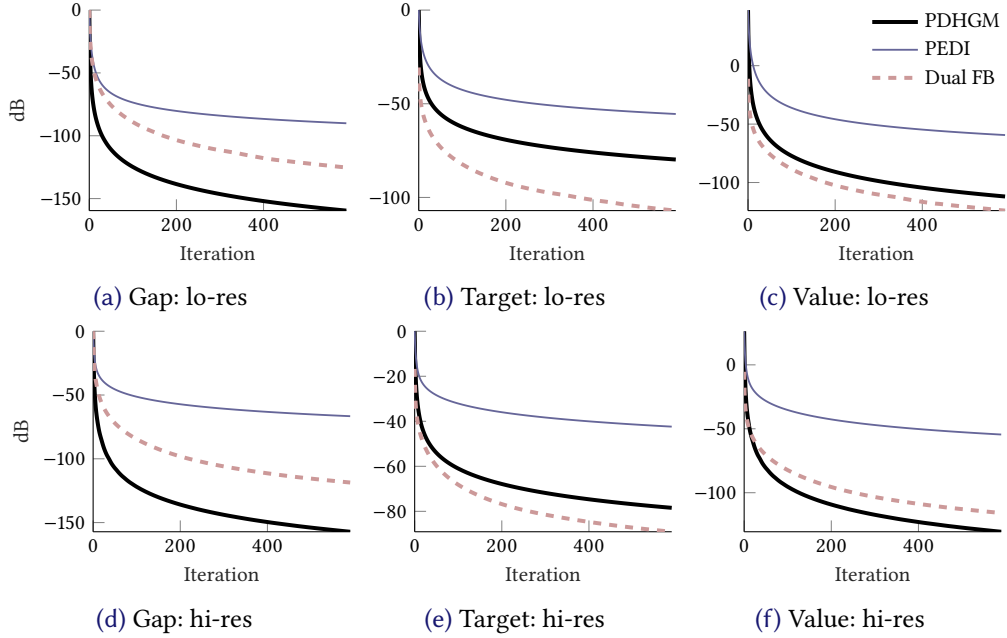

    \centering
    \subcaptionbox{Gap: lo-res\zinit\label{fig:tv-denoising-deterministic-gap-lq-zinit}}{
        \setlength{\figureheight}{0.19\textwidth}\setlength{\figurewidth}{0.23\textwidth}\scriptsize\input{res/tv_denoising_results_lq_10000_gamma0.9_zinit_for_interiorcp_gaplog.tikz}%
    }
    \subcaptionbox{Target: lo-res\zinit\label{fig:tv-denoising-deterministic-target-lq-zinit}}{
        \setlength{\figureheight}{0.19\textwidth}\setlength{\figurewidth}{0.23\textwidth}\scriptsize\input{res/tv_denoising_results_lq_10000_gamma0.9_zinit_for_interiorcp_target2log.tikz}%
    }
    \subcaptionbox{Value: lo-res\zinit\label{fig:tv-denoising-deterministic-value-lq-zinit}}{
        \setlength{\figureheight}{0.19\textwidth}\setlength{\figurewidth}{0.23\textwidth}\scriptsize\input{res/tv_denoising_results_lq_10000_gamma0.9_zinit_for_interiorcp_dvallog.tikz}%
    }
    \subcaptionbox{Gap: hi-res\zinit\label{fig:tv-denoising-deterministic-gap-hq-zinit}}{
        \setlength{\figureheight}{0.19\textwidth}\setlength{\figurewidth}{0.23\textwidth}\scriptsize\input{res/tv_denoising_results_hq_10000_gamma0.9_zinit_for_interiorcp_gaplog.tikz}%
    }
    \subcaptionbox{Target: hi-res\zinit\label{fig:tv-denoising-deterministic-target-hq-zinit}}{
        \setlength{\figureheight}{0.19\textwidth}\setlength{\figurewidth}{0.23\textwidth}\scriptsize\input{res/tv_denoising_results_hq_10000_gamma0.9_zinit_for_interiorcp_target2log.tikz}%
    }
    \subcaptionbox{Value: hi-res\zinit\label{fig:tv-denoising-deterministic-value-hq-zinit}}{
        \setlength{\figureheight}{0.19\textwidth}\setlength{\figurewidth}{0.23\textwidth}\scriptsize\input{res/tv_denoising_results_hq_10000_gamma0.9_zinit_for_interiorcp_dvallog.tikz}%
    }
    \caption{TV denoising convergence behaviour: high and low resolution images; gap, distance to target solution, and primal objective value in decibels.}
    \label{fig:tv-denoising}
\end{figure}

\begin{table}[tbp!]
    \caption{$\TV$ denoising performance: CPU time and number of iterations (at a resolution of 10) to reach given duality gap, distance to target, or primal objective value.}
    \label{table:tv-denoising}
    \centering
    \small
    \setlength{\tabcolsep}{2pt}
    \begin{tabular}{l|rr|rr|rr}
\multicolumn{7}{c}{low resolution}\\
\hline
 & \multicolumn{2}{c}{gap $\le -50$dB} & \multicolumn{2}{|c}{tgt $\le -50$dB} & \multicolumn{2}{|c}{val $\le -50$dB}\\
Method & iter & time & iter & time & iter & time\\
\hline
PDHGM & 4 & 0.01s & 30 & 0.09s & 27 & 0.08s\\
PEDI & 16 & 0.04s & 270 & 0.73s & 280 & 0.75s\\
Dual FB & 12 & 0.03s & 6 & 0.02s & 9 & 0.02s\\
\end{tabular}

    \,
    \begin{tabular}{rr|rr|rr}
\multicolumn{6}{c}{high resolution}\\
\hline
\multicolumn{2}{c}{gap $\le -50$dB} & \multicolumn{2}{|c}{tgt $\le -50$dB} & \multicolumn{2}{|c}{val $\le -50$dB}\\
iter & time & iter & time & iter & time\\
\hline
4 & 0.13s & 34 & 1.42s & 13 & 0.52s\\
86 & 3.78s & -- & -- & 400 & 17.76s\\
14 & 0.62s & 21 & 0.96s & 12 & 0.53s\\
\end{tabular}

\end{table}

%
%

\begin{figure}[tbp!]
    \centering
    \subcaptionbox{Gap: lo-res\zinit\label{fig:h1-denoising-deterministic-gap-lq-zinit}}{
        \setlength{\figureheight}{0.19\textwidth}\setlength{\figurewidth}{0.23\textwidth}\scriptsize\input{res/h1_denoising_results_lq_10000_gamma0.9_zinit_for_interiorcp_gaplog.tikz}%
    }
    \subcaptionbox{Target: lo-res\zinit\label{fig:h1-denoising-deterministic-target-lq-zinit}}{
        \setlength{\figureheight}{0.19\textwidth}\setlength{\figurewidth}{0.23\textwidth}\scriptsize\input{res/h1_denoising_results_lq_10000_gamma0.9_zinit_for_interiorcp_target2log.tikz}%
    }
    \subcaptionbox{Value: lo-res\zinit\label{fig:h1-denoising-deterministic-value-lq-zinit}}{
        \setlength{\figureheight}{0.19\textwidth}\setlength{\figurewidth}{0.23\textwidth}\scriptsize\input{res/h1_denoising_results_lq_10000_gamma0.9_zinit_for_interiorcp_dvallog.tikz}%
    }
    \subcaptionbox{Gap: hi-res\zinit\label{fig:h1-denoising-deterministic-gap-hq-zinit}}{
        \setlength{\figureheight}{0.19\textwidth}\setlength{\figurewidth}{0.23\textwidth}\scriptsize\input{res/h1_denoising_results_hq_10000_gamma0.9_zinit_for_interiorcp_gaplog.tikz}%
    }
    \subcaptionbox{Target: hi-res\zinit\label{fig:h1-denoising-deterministic-target-hq-zinit}}{
        \setlength{\figureheight}{0.19\textwidth}\setlength{\figurewidth}{0.23\textwidth}\scriptsize\input{res/h1_denoising_results_hq_10000_gamma0.9_zinit_for_interiorcp_target2log.tikz}%
    }
    \subcaptionbox{Value: hi-res\zinit\label{fig:h1-denoising-deterministic-value-hq-zinit}}{
        \setlength{\figureheight}{0.19\textwidth}\setlength{\figurewidth}{0.23\textwidth}\scriptsize\input{res/h1_denoising_results_hq_10000_gamma0.9_zinit_for_interiorcp_dvallog.tikz}%
    }
    \caption{$H^1$ denoising convergence behaviour: high and low resolution images; gap, distance to target solution, and primal objective value in decibels.}
    \label{fig:h1-denoising}
\end{figure}

\begin{table}[tbp!]
    \caption{$H^1$ denoising performance: CPU time and number of iterations (at a resolution of 10) to reach given duality gap, distance to target, or primal objective value.}
    \label{table:h1-denoising}
    \centering
    \small
    \setlength{\tabcolsep}{2pt}
    \begin{tabular}{l|rr|rr|rr}
\multicolumn{7}{c}{low resolution}\\
\hline
 & \multicolumn{2}{c}{gap $\le -150$dB} & \multicolumn{2}{|c}{tgt $\le -100$dB} & \multicolumn{2}{|c}{val $\le -100$dB}\\
Method & iter & time & iter & time & iter & time\\
\hline
PDHGM & 360 & 0.91s & -- & -- & 180 & 0.46s\\
PEDI & 120 & 0.31s & 87 & 0.22s & 54 & 0.14s\\
Dual FB & 44 & 0.11s & 43 & 0.11s & 22 & 0.05s\\
\end{tabular}

    \,%
    \begin{tabular}{rr|rr|rr}
\multicolumn{6}{c}{high resolution}\\
\hline
\multicolumn{2}{c}{gap $\le -150$dB} & \multicolumn{2}{|c}{tgt $\le -100$dB} & \multicolumn{2}{|c}{val $\le -100$dB}\\
iter & time & iter & time & iter & time\\
\hline
380 & 11.48s & -- & -- & 120 & 3.60s\\
51 & 1.69s & 39 & 1.28s & 24 & 0.78s\\
17 & 0.74s & 18 & 0.78s & 8 & 0.32s\\
\end{tabular}

\end{table}

\section*{Acknowledgements}

The final stages of this research have been performed with the support of the EPSRC First Grant EP/P021298/1, ``PARTIAL Analysis of Relations in Tasks of Inversion for Algorithmic Leverage''.

\section*{A data statement for the EPSRC}

The source code and data used to produce the numerical results of this publication have been deposited at \doi{10.5281/zenodo.1402031}.

\bibliography{bib/abbrevs,bib/bib-own,bib/bib}

\end{document}